\documentclass[preprint]{elsarticle}
\usepackage[utf8]{inputenc}
\usepackage[numbers]{natbib}
\usepackage[english]{babel}
\usepackage{amsmath}
\usepackage{amsthm}
\usepackage{thmtools}
\usepackage{thm-restate}
\usepackage{bm}
\usepackage{mathtools}
\usepackage{xspace}
\usepackage{booktabs}
\usepackage{multirow}
\usepackage{amssymb}
\usepackage[hidelinks]{hyperref}
\usepackage{todonotes}
\usepackage{marvosym}
\usepackage[short, no comma]{optidef}
\usepackage{caption}
\captionsetup{width=\linewidth, font=small}
\usepackage{float}
\usepackage[toc,page]{appendix}
\usepackage{ mathrsfs}
\usepackage[binary-units=true]{siunitx}
\sisetup{
	locale = US ,
	group-separator={,},
	group-minimum-digits = 1
}

\pgfdeclareverticalshading{rainbow}{100bp}
{color(0bp)=(red); color(25bp)=(red); color(35bp)=(yellow);
	color(45bp)=(green); color(55bp)=(cyan); color(65bp)=(blue);
	color(75bp)=(violet); color(100bp)=(violet)}

\usepackage[a4paper,
totalwidth=.643\paperwidth,
totalheight=.93\paperheight,
headsep=.03\paperheight,
headheight = 13.59999pt,
hmarginratio=6:8,
vmarginratio=1:1,
includeheadfoot, 
bindingoffset=11mm,
dvips]{geometry}

\newcommand{\strategicPlanningProblem}{\mathrm{SPMMU}\xspace}
\newcommand{\robustStrategicPlanningProblem}{\mathrm{rSPMMU}\xspace}
\newcommand{\sessionSpecificStrategicPlanningProblem}{\mathrm{\session SPMMU}\xspace}

\newcommand{\location}{\ell}
\newcommand{\elocation}{\boldsymbol{\location}}
\newcommand{\Locations}{L}
\newcommand{\eLocations}{\boldsymbol{\Locations}}
\newcommand{\practice}{p}
\newcommand{\epractice}{\boldsymbol{\practice}}
\newcommand{\Practices}{P}
\newcommand{\ePractices}{\boldsymbol{\Practices}}
\newcommand{\demand}{v}
\newcommand{\edemand}{\boldsymbol{\demand}}
\newcommand{\Demands}{V}
\newcommand{\eDemands}{\boldsymbol{\Demands}}

\newcommand{\ConsiderationSet}{N}
\newcommand{\eSteerableConsiderationSet}{\boldsymbol{\ConsiderationSet}^\steerableDemand}
\newcommand{\eUnsteerableConsiderationSet}{\boldsymbol{\ConsiderationSet}^\unsteerableDemand }

\newcommand{\closestFacility}[2]{k_{#2}^{\min}(#1)}
\newcommand{\eclosestFacility}[2]{\boldsymbol{k}_{#2}^{\min}(#1)}
\newcommand{\closestFixedFacility}[2]{k_{#2}^{\min}(#1)}

\newcommand{\costLocation}{c}
\newcommand{\costSession}{\hat{c}}
\newcommand{\capacitySession}{\hat{b}}
\newcommand{\capacityPractice}{\bar{b}}
\newcommand{\capacityLocation}{b}

\newcommand{\steerableDemand}{d}
\newcommand{\varUnsteerableDemand}{w}
\newcommand{\unsteerableDemand}{u}
\newcommand{\helper}{\gamma}

\newcommand{\dualBudgetCap}{\varepsilon}

\newcommand{\varEncodeU}{o}
\newcommand{\varEncodeN}{n}
\newcommand{\varEncodeV}{r}

\newcommand{\N}{\mathbb{N}}
\newcommand{\Z}{\mathbb{Z}}
\newcommand{\R}{\mathbb{R}}

\newcommand{\Uncertainty}{\mathcal{U}}
\newcommand{\BudgetedUncertainty}{\Uncertainty^{\demandBound}}
\newcommand{\demandBound}{\Gamma}
\newcommand{\lowerSteerable}{\alpha}
\newcommand{\upperSteerable}{\beta}
\newcommand{\AllScenSteerable}{\Xi}
\newcommand{\scenSteerable}{\xi}
\newcommand{\indexSteerable}{1}

\newcommand{\lowerUnsteerable}{\sigma}
\newcommand{\upperUnsteerable}{\tau}
\newcommand{\AllScenUnsteerable}{H}
\newcommand{\scenUnsteerable}{\eta}
\newcommand{\indexUnsteerable}{2}
\newcommand{\MMUoperation}{m}
\newcommand{\assignmentSteerablePatientDemand}[1]{f_{#1}}
\newcommand{\SubsetOfSubsets}{\mathscr{U}}

\newcommand{\arcCapacity}{\mu}

\newcommand{\OriginalFormulation}{(\mathrm{Det})}
\newcommand{\OriginalSessionSpecificFormulation}{(\mathrm{\session Det})}
\newcommand{\BendersSubproblem}[3]{(\mathrm{SP})(#1,#2,#3)}
\newcommand{\LPBendersSubproblem}[3]{(\mathrm{SP}_{\mathrm{LP}})(#1,#2,#3)}
\DeclareMathOperator{\MasterProblem}{(MP)}
\DeclareMathOperator{\BendersFormulation}{(Det-B)}
\DeclareMathOperator{\BendersFormulationNB}{Det-B}
\DeclareMathOperator{\SessionSpecificBendersFormulation}{(\session Det-B)}
\DeclareMathOperator{\RobustBendersFormulation}{(Rob-B)}
\DeclareMathOperator{\IntervalRobustBendersFormulation}{(RobI-B)}
\DeclareMathOperator{\IntervalRobustBendersFormulationNB}{RobI-B}
\DeclareMathOperator{\GammaRobustBendersFormulation}{(Rob\demandBound-B)}
\DeclareMathOperator{\GammaRobustBendersFormulationNB}{Rob\demandBound-B}
\DeclareMathOperator{\SeparationProblemFormulation}{(Sep)}
\DeclareMathOperator{\SeparationProblemFormulationAppendix}{(Sep')}
\newcommand{\PrimalUnsteerable}[1]{(\mathrm{P}^U)(#1)}
\newcommand{\LPPrimalUnsteerable}[1]{(\mathrm{P}^U_{\mathrm{LP}})(#1)}
\newcommand{\DualUnsteerable}[1]{(\mathrm{D}^U_{\mathrm{LP}})(#1)}
\newcommand{\InducedNeighborhood}{\Demands^U(\hat{\varUnsteerableDemand})}

\newcommand{\LPPrimalAssumption}[1]{(\mathrm{P}^k_{\mathrm{LP}})(#1)}
\newcommand{\LPDualAssumption}[1]{(\mathrm{D}^k_{\mathrm{LP}})(#1)}

\DeclareMathOperator{\dist}{dist}

\newcommand{\maxDist}{\Delta}
\newcommand{\simulatedDemand}[2]{q^{#1}_{#2}}
\newcommand{\avgSimulatedDemand}[1]{\bar{q}_{#1}}
\newcommand{\round}[1]{\text{round}\hspace*{-1pt}\left( #1 \right) }
\newcommand{\roundFixBracket}[1]{\text{round}\bigg( #1 \bigg) }
\newcommand{\fracUnsteerable}{\omega}

\newcommand{\session}{t}
\newcommand{\Sessions}{\mathcal{T}}

\newcommand{\Order}[1]{\pi_{#1}}
\newcommand{\order}[2]{\Order{#1}(#2)}
\newcommand{\InvOrder}[1]{\pi^{-1}_{#1}}
\newcommand{\invorder}[2]{\InvOrder{#1}(#2)}
\newcommand{\eOrder}[1]{\boldsymbol{\pi}_{#1}}
\newcommand{\eorder}[2]{\eOrder{#1}(#2)}
\newcommand{\eInvOrder}[1]{\boldsymbol{\pi}^{-1}_{#1}}
\newcommand{\einvorder}[2]{\eInvOrder{#1}(#2)}

\theoremstyle{definition}
\newtheorem{definition}{Definition}

\newtheorem{assumption}{Assumption}

\theoremstyle{plain}
\newtheorem{theorem}[definition]{Theorem}

\newtheorem{corollary}[definition]{Corollary}
\newtheorem{lemma}[definition]{Lemma}

\usepackage{amstext} 
\usepackage{array}   
\newcolumntype{L}{>{$}l<{$}} 
\newcolumntype{R}{>{$}r<{$}} 

\definecolor{yellow}{HTML}{EFC000} 
\definecolor{blue}{HTML}{0073C2} 
\definecolor{gray}{HTML}{868686} 
\usepackage{lineno}
\modulolinenumbers[5]

\makeatletter
\let\@afterindenttrue\@afterindentfalse
\def\ps@pprintTitle{%
	\let\@oddhead\@empty
	\let\@evenhead\@empty
	\def\@oddfoot{}%
	\let\@evenfoot\@oddfoot}
\makeatother

\begin{document}

\begin{frontmatter}

\title{Robust strategic planning for mobile medical units with steerable and unsteerable demands}

\author[mymainaddress]{Christina B\"using}
\ead{buesing@math2.rwth-aachen.de}

\author[mymainaddress]{Martin Comis\corref{mycorrespondingauthor}}
\ead{comis@math2.rwth-aachen.de}
\cortext[mycorrespondingauthor]{Corresponding author}

\author[mysecondaryaddress]{Eva Schmidt}
\ead{eva.schmidt@mathematik.uni-kl.de}

\author[mysecondaryaddress]{Manuel Streicher}
\ead{streicher@mathematik.uni-kl.de}

\address[mymainaddress]{Lehrstuhl II f\"ur Mathematik, RWTH Aachen University, Germany}
\address[mysecondaryaddress]{Optimization Research Group, Technische Universit\"at Kaiserslautern, Germany}

\begin{abstract}
Mobile medical units (MMUs) are customized vehicles fitted with medical equipment that are used to provide primary care in rural environments.
As MMUs can be easily relocated, they enable a demand-oriented, flexible, and local provision of health services.
In this paper, we investigate the strategic planning of an MMU service by deciding where MMU operation sites should be set up and how often these should be serviced.
To that end, we study the strategic planning problem for MMUs ($\strategicPlanningProblem$) --  a capacitated set covering problem that includes existing practices and two types of patient demands: 
(i) steerable demands representing patients who seek health services through a centralized appointment system and can be steered to any treatment facility within a given consideration set and (ii) unsteerable demands representing walk-in patients who always visit the closest available treatment facility.
We propose an integer linear program for the 
$\strategicPlanningProblem$ that can be solved via Benders decomposition and constraint generation.
Starting from this formulation, we focus on the uncertain version of the problem in which steerable and unsteerable demands are modeled as random variables that may vary within a given interval.
Using methods from robust optimization and duality theory, we devise exact constraint generation methods to solve the robust counterparts for interval and budgeted uncertainty sets.
All our results transfer to the session-specific $\strategicPlanningProblem$ and we evaluate our models in a computational study based on a set of instances generated from a rural primary care system in Germany.
\end{abstract}

\begin{keyword}
strategic planning \sep mobile medical units \sep covering problem \sep demand uncertainty \sep robust optimization \sep  Benders decomposition
\MSC[2010]  05C69\sep 90B50 \sep 90B80 \sep 90C10 \sep 90C90
\end{keyword}

\end{frontmatter}

\section{Introduction}
\label{sec:introduction}
Health is a pillar of the prosperity and well-being of a society~\cite{dodd2005health}.
Therefore, the majority of nations world-wide implement health systems 
``whose primary purpose is to promote, restore and/or maintain 
health''~\cite{WHO_REPORT}.
Recently, the functioning of these systems has been threatened by ongoing changes in demographics~\cite{MAN10}.
In rural primary care, a decreasing number of physicians face an aging 
population with increasing needs that is scattered over sparsely populated 
regions~\cite{MAN10,doi:10.1111}.
While rural health has always suffered from a low density of health professionals and thus long access distances~\cite{10.2307/3762568}, this development poses the risk of multiplying the existing barriers to health services.

To counteract the growing distances between patients and services, the overcoming of access barriers with the help of mobile medical units (MMUs) has been studied in many developed and developing countries; compare~\cite{10.2307/3762568,THORSEN20181062,doi:10.1177/0972063417717900,Hill14,Schwartze2017}.
MMUs are customized vehicles fitted with medical equipment that are easy to relocate and can provide most of the health services a regular stationary practice could~\cite{doi:10.1177/0972063417717900}.
The flexibility of MMUs offers the possibility of a local and demand-oriented provision of primary care in sparsely populated regions, which are characteristic to rural settings~\cite{Hill14}.

Although MMUs have been reported to operate in various modes, we will focus exclusively on a weekly recurring operation in clinical sessions as described in~\cite{Schwartze2017,10.2307/3762568,THORSEN20181062}.
In this mode of operation, MMUs are stationed in larger cities (which they do not serve) and set out each day to provide health services at fixed sites in the surrounding rural communities.
As common in primary care, we thereby structure each day into a morning and an afternoon session and thus MMUs can service at most two sites per day~\cite{KLASSEN199683}.
At the end of each day, MMUs return to their home depot such that all personnel can return to their home overnight.
Next to the benefit of increased staff satisfaction, this incidentally reduces the cost for nonexempt staff~\cite{THORSEN20181062}.

The potentials of MMUs manifest in the increasing number of applications in practice.
The US alone has an estimated \num{1500} MMUs receiving \num{5} million visits each year~\cite{Hill14}.
However, we must not forget that MMUs are still nascent to health care delivery and that their operation is often associated with major challenges~\cite{doi:10.1177/0972063417717900}.
For instance, Patro et al.~\cite{Patro08} report long patient waiting times as a result of a high workload, while other studies had problems with small or decreasing number of patient visits~\cite{Schwartze2017,geoffroy2014bringing}.
There is thus a general consensus, that better strategies for the pre-launch of an MMU service are required to improve its effectiveness and sustainability~\cite{doi:10.1177/0972063417717900}.

To contribute to these efforts, we study the combined location and capacity planning for MMU services at the strategic level in form of a capacitated set covering problem called the \textit{strategic planning problem for MMUs} $(\strategicPlanningProblem)$.
The $\strategicPlanningProblem$ addresses the problem of deciding where MMU operation sites should be set up and how often these should be serviced in the course of a week. 
As MMUs are intended as a complementary form of health provision that should be integrated into the present primary care systems~\cite{DOERNER20071078}, we include existing practices with their treatment capacities into our model.
In addition, we consider two fundamentally different types of patient demands that are common in primary care systems:
(i) patients who seek health services via a centralized appointment system and can be steered towards any available treatment facility within the patients' consideration sets and 
(ii) walk-in patients who forgo the appointment system and always visit the treatment facility of their choice -- which we assume to be the closest to the patient.
Given the nature of these patients, we refer the former as the \textit{steerable patient demands} while we call the latter the \textit{unsteerable patient demands}.

The main focus of this paper is the extension of the $\strategicPlanningProblem$ to uncertain patient demands which are intrinsic to the nature of health care needs~\cite{Fone2003}.
To that end, we model both types of patient demands as random variables that we integrate into our models in a robust optimization framework.
Before we discuss previous related work and go into further detail, we stress our contribution to the field.

\paragraph{Contribution}
In this paper, we define the strategic planning problem for MMUs $(\strategicPlanningProblem)$ as a capacitated set covering problem that includes existing practices and two types of patient demands:
steerable demands corresponding to patients who seek health services via a centralized appointment system and unsteerable demands corresponding to walk-ins who always visit their closest available treatment facility.
We present a compact linear IP formulation for the problem which we subsequently solve via Benders decomposition and constraint generation.
To account for uncertainties in both types of patient demands, we introduce exact constraint generation algorithms to solve the robust counterpart of the Benders formulation for interval and budgeted uncertainty sets.
Finally, we show that all our results directly translate to the session-specific version of the problem and evaluate our formulations on a set of test instances based on a rural real-world primary care system in Germany.
Especially the robust formulation based on budgeted uncertainty sets leads to high quality solutions in an acceptable time frame while limiting the price of robustness.
To the best of our knowledge, this is the first contribution that studies the allocation of MMUs as a robust set covering problem.
The concept of modeling two different types of demands has not been considered so far and represents a new extension to the field of location planning.

\paragraph{Outline} We structure the remainder of this paper as follows.
First, we review the related work in Section~\ref{sec:lit} before we formalize the $\strategicPlanningProblem$ in Section~\ref{sec:prob_def} by providing an integer programming formulation which we solve via Benders decomposition.
In Section~\ref{sec:uncertainty}, we extend the problem to uncertainties in both, the steerable and the unsteerable demands.
To overcome the drawbacks of the session-aggregation of patient demands, we introduce a session-specific variant of the $\strategicPlanningProblem$ in Section~\ref{sec:multi_period} and show that all our previous results transfer to this setting.
Finally, we conclude with an extensive computational study based on real-world inspired instances in Section~\ref{sec:computational_study} and a brief discussion in Section~\ref{sec:conclusion}.
\section{Literature review}
\label{sec:lit}
To the best of our knowledge, one of the earliest works on the planning of MMUs is due to Hodgson et al.~\cite{doi:10.1111/0022-4146.00113} who consider a location-routing problem for a single MMU, i.e., the authors simultaneously determine the vehicle stops and the vehicle route.
In a feasible solution, a set of population centers must be within a prespecified distance of a vehicle stop along the planned route and the objective is to minimize the total length of the route.
In a follow-up article, Hachicha et al.~\cite{HACHICHA200029} extend this problem setting to multiple vehicles and vehicle stops that must be serviced.
All MMU routes must start and end at a central depot and the number of stops per route as well as the total route length is bounded to ensure a balanced workload between MMUs.
Doerner et al.~\cite{DOERNER20071078} consider the problem formulation in~\cite{doi:10.1111/0022-4146.00113} for multiple objective functions.
That is, they evaluate MMU routes with respect to three criteria:
economic efficiency, average access distance, and coverage.
Ozbaygin et al.~\cite{OZBAYGIN2016226} extend the coverage objective for the one vehicle setting to partial coverage, i.e., only the population centers that are visited by an MMU are completely covered while population centers in reach of an MMU stop are only covered with a certain percentage.
More recently, Y\"ucel et al.~\cite{Yuecel18} further generalize the idea of partial coverage to multiple vehicles and integrate their MIP formulation into a data-driven optimization framework based on credit card transactional data.

The main difference between the previous articles and this article is the considered setting: 
While the former focus on very extensive regions with bad road infrastructure and MMU routes that can be multiple weeks long, we consider the problem on a much smaller scale with MMU routes that service at most two stops per day and return to a depot each night. 
As a result, the vehicle routing plays a far more important role in \cite{doi:10.1111/0022-4146.00113,HACHICHA200029,DOERNER20071078,OZBAYGIN2016226,Yuecel18} and is therefore considered at the strategic level while the incorporation of demands and allocation of treatment capacities are considered downstream once the MMU routes are fixed.
We, on the other hand, consider patient demands and the allocation of treatment capacities at the strategic level and shift the vehicle routing into a subsequent problem that boils down to a matching problem for one depot.

A problem originating in humanitarian logistics that is quite related to the $\strategicPlanningProblem$  is studied by Tricoire et al.~\cite{TRICOIRE20121582}.
In this problem, the authors consider the simultaneous setup of distribution centers and the routing of vehicles that restock these with relief goods.
Distribution centers have a certain capacity and their setup induces cost.
The demand for relief goods at the population centers is uncertain and targets the closest distribution center.
The objective is the minimization of the setup and routing costs while the expected coverage of the demands is to be maximized.
The problem is formulated as a stochastic bi-objective combinatorial optimization problem and solved by combining a scenario-based approach with an epsilon-constraint method.
A deterministic single objective variant of this problem that does not consider setup cost for distribution centers and allows demands to be freely assigned among all operated distribution centers within a certain covering distance is studied by Naji-Azimi et al.~\cite{NAJIAZIMI2012596}.

Comparing the $\strategicPlanningProblem$ to the problems in~\cite{TRICOIRE20121582,NAJIAZIMI2012596}, we note that neither of the latter considers existing infrastructure and only either unsteerable or steerable demands but not the combination of the two.
Moreover, both problems put a strong emphasis on the vehicle routing, which we do not consider at all.
Instead, the $\strategicPlanningProblem$ is actually a pure covering location problem, more specifically a set covering problem.
Set covering problems have been studied extensively in various applications and comprehensive review articles on existing work in this field can for example be found in~\cite{caprara2000algorithms,FARAHANI2012368,AHMADIJAVID2017223,garcia2015covering}.
In the following, we will focus our review on set covering problems that incorporate uncertainties in a robust or probabilistic optimization framework.

Probably the most related set covering problem to the $\strategicPlanningProblem$ is the $q$-multiset multicover problem studied by \citet{KRUMKE2019845}.
The $q$-multiset multicover problem is the special case of the decision version of the $\strategicPlanningProblem$ that does not consider existing facilities, setup cost, and unsteerable demands.
The authors study the problem's complexity and  investigate the problem's extension to uncertain demands that may vary within a given interval.
Using budgeted uncertainty sets, they devise a formulation of the robust counterpart which can be solved by constraint generation.
This paper builds on the results in~\cite{KRUMKE2019845} and generalizes them to the $\strategicPlanningProblem$.
Various other studies on robust set covering problems mostly differ in terms of the applied robustness concept.
Dhamdhere et al.~\cite{Dhamdhere2005} introduce demand-robust covering problems and provide approximation algorithms.
Feige et al.~\cite{Feige2007} consider two-stage robust covering problems and devise approximation algorithms, whereas Gupta et al.~\cite{gupta2014thresholded} study approximation algorithms for the $k$-robust set covering problem.
The set covering problem with uncertain cost coefficients is considered by Pereira and Averbakh~\cite{Pereira2013} and exact algorithms for computing min-max regret solutions are presented.

Right-hand side uncertainties in set covering problems in the form of chance constraints, i.e., where demands only have to be covered with a certain probability, are considered under the name probabilistic set covering problem.
Beraldi and Ruszczy\'{n}ski \cite{Beraldi2002} study the probabilistic set covering problem and devise exact methods by enumerating over the set of $p$-efficient points.
Later, Saxena et al.~\cite{Saxena2010} introduce the notion of $p$-inefficiency to devise compact MIP formulations for the probabilistic set covering problem.
Left-hand side uncertainties in set covering problems in the form of chance constraints have been considered under the name uncertain set covering problems.
A polyhedral study of the uncertain set covering problems is performed by Fischetti and Monaci~\cite{Fischetti:cutting-plane-robust} who compare a compact versus a cutting plane model.
More recently, Lutter et al.~\cite{Lutter+etal:cover} introduce compact and non-compact  robust formulations for the uncertain set covering problem by combining concepts from robust and probabilistic optimization.

For more literature on set covering problems under uncertainty, we refer to the references in \cite{Lutter+etal:cover}.
Preference orderings of clients that are similar to our concept of unsteerable patient demands, have been studied for a deterministic facility location problem known as the simple plant location problem in~\cite{HP87} or more recently in~\cite{LQLA07}.

To the best of our knowledge, there are only two previous articles that consider the allocation of MMUs as a set covering problem.
Aguwa et al.~\cite{Aguwa18} focus on data analytics and reduce the MMU allocation to the standard set covering problem.
A more elaborate maximum covering problem for the strategic planning of a single mobile dentistry clinic is considered by Thorsen and McGarvey~\cite{THORSEN20181062} with the goal of improving accessibility while maintaining financial sustainability.
As both of these works consider purely deterministic settings, this paper represents the first contribution to the field that considers the allocation of MMUs as a robust set covering problem.
\section{Problem classification and formulation}
\label{sec:prob_def}
We introduced the strategic planning problem for MMUs $(\strategicPlanningProblem)$ in Section~\ref{sec:introduction}.
In the following, we will formalize the $\strategicPlanningProblem$ and analyze the problem's complexity before we devise solution methods based on mathematical programming techniques. 

The strategic planning problem for MMUs is a capacitated set covering problem that provides the basis for an MMU service:
given a set of potential MMU operation sites~$\Locations$, a set of existing 
primary care practices~$\Practices$, and a set of aggregated patient demand 
origins $\Demands$, decide how many MMU sessions shall be operated at each site 
$\location\in \Locations$ in the course of a week in order to meet all patient 
demands at minimum cost. 
Potential MMU operation sites $\location\in \Locations$ have to be set up at cost 
$\costLocation_\location\in \N$ and allow for up to 
$\capacityLocation_\location\in \N$ operated sessions per week.
Each operated MMU session yields a weekly treatment capacity $\capacitySession\in \N$
and induces the cost $\costSession\in \N$.
Thus, we can define the following.
\begin{definition}
	A \textit{strategic MMU operation plan} is a function $\MMUoperation\colon \Locations \to \N$ that respects the session capacity at each site, i.e., $\MMUoperation_\location\leq \capacityLocation_\location$ for all $\location \in \Locations$ where we notate $\MMUoperation_\location \coloneqq \MMUoperation(\location)$.
	The \textit{cost} of a strategic MMU operation plan $\MMUoperation$ is defined by the costs of setting up sites and operating sessions, i.e., $c(\MMUoperation)\coloneqq \sum_{\location \in \Locations: \MMUoperation_\location>0} \costLocation_\location+ \costSession \, \MMUoperation_\location$.
\end{definition}
Existing primary care practices $\practice\in \Practices$ have an individual 
weekly treatment capacity $\capacityPractice_\practice\in \N$.
Patient demand origins $\demand\in\Demands$ specify the weekly treatment demand 
of a particular region.
To prevent patients from having to travel excessive distances, a \emph{consideration 
	set} $\ConsiderationSet(\demand) \subseteq  \Locations \cup \Practices$ specifies 
for every demand origin $\demand\in \Demands$ the feasible treatment facilities.
The weekly patient demand at each demand origin $\demand \in \Demands$ consists of two 
types of demands:
(i) the steerable demands $\steerableDemand_\demand \in \N$ corresponding to 
patients who announce themselves through a centralized appointment 
system and can be steered to any operating treatment facility in the consideration set 
$\ConsiderationSet(\demand)$ and
(ii) the unsteerable demands $\unsteerableDemand_\demand \in \N$ corresponding to 
walk-in patients that always visit the nearest operating treatment facility
$\closestFacility{\MMUoperation}{\demand} \in \ConsiderationSet(\demand)$
which depends on a given distance measure $\dist\colon \Demands \times \left(\Locations \cup \Practices\right) \to \N$ and the strategic MMU operation plan $\MMUoperation$.

In a feasible strategic MMU operation plan, all steerable patient demands have to be assigned to a feasible treatment facility and every facility's treatment capacity has to be respected.
To formalize these requirements, we first define an assignment of the steerable patient demands to the treatment facilities.

\begin{definition}
	An \textit{assignment} of the steerable patient demands is a set of functions $\left\lbrace \assignmentSteerablePatientDemand{\demand}\right\rbrace _{\demand \in\Demands}$ with $\assignmentSteerablePatientDemand{\demand} \colon \ConsiderationSet(\demand) \to \N$ that distribute all steerable patient demands within their respective consideration set, i.e., $\sum_{k \in \ConsiderationSet(\demand)} \assignmentSteerablePatientDemand{\demand}(k)= \steerableDemand_\demand$ for all $\demand \in \Demands$.
\end{definition}

Next, we define \textit{feasible} strategic MMU operation plans. To ease notation, we denote all patient demand origins that can target a treatment facility $k \in 
\Locations \cup \Practices$ by $\ConsiderationSet(k)\coloneqq 
\{\demand\in \Demands : k \in \ConsiderationSet(\demand) \}$.

\begin{definition}
	A strategic MMU operation plan $\MMUoperation$ is \textit{feasible} if there exists an assignment of the steerable patient demands $\left\lbrace \assignmentSteerablePatientDemand{\demand}\right\rbrace _{\demand \in\Demands}$ that respects the treatment capacity at each treatment facility $k \in \Locations \cup \Practices$; that is 
	\begin{align*}
	\sum_{\demand \in \Demands: \closestFacility{\MMUoperation}{\demand}=k} \unsteerableDemand_\demand + \sum_{\demand \in \ConsiderationSet(k)} \assignmentSteerablePatientDemand{\demand}(k) \leq \begin{cases}
	\capacityPractice_k \quad &\text{if } k\in \Practices,\\
	\capacitySession \, \MMUoperation_k & \text{if } k\in \Locations.
	\end{cases}
	\end{align*}
\end{definition}

Using the notion of a feasible MMU operation plan, we can finally provide a formal definition for the $\strategicPlanningProblem$.

\begin{definition}[$\strategicPlanningProblem$]
	Let the potential MMU operation sites $\location\in \Locations$ with setup costs $\costLocation_\location\in \N$ and weekly session capacities $\capacityLocation_\location \in \N$ be given.
	Moreover, let $\practice\in \Practices$ be the existing practices with weekly treatment capacities $\capacityPractice_\practice \in \N$, and 
	$\demand \in \Demands$ be the patient demand origins with consideration sets $\ConsiderationSet(\demand)\subseteq \Locations \cup \Practices$ and weekly steerable and unsteerable demands $\steerableDemand_\demand,\unsteerableDemand_\demand \in \N$.
	Then, the \textit{strategic planning problem for MMUs} ($\strategicPlanningProblem$) asks for a feasible strategic MMU operation plan of minimum cost, where every operated MMU session induces the cost $\costSession\in \N$ and yields a weekly treatment capacity $\capacitySession\in \N$.
\end{definition}

Classifying the $\strategicPlanningProblem$, we begin by showing that the problem
 is strongly NP-hard.

\begin{theorem}
	The $\strategicPlanningProblem$ is strongly NP-hard.
	\label{NP_SPMMU}
\end{theorem}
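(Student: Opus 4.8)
The plan is to prove strong NP-hardness by a polynomial reduction from a known strongly NP-hard problem. The natural candidate is the $q$-multiset multicover problem studied by Krumke et al.~\cite{KRUMKE2019845}, which the excerpt explicitly identifies as ``the special case of the decision version of the $\strategicPlanningProblem$ that does not consider existing facilities, setup cost, and unsteerable demands.'' Since that problem is a restriction of ours, if it is known to be strongly NP-hard, the result follows almost immediately. Concretely, I would take an arbitrary instance of the $q$-multiset multicover problem and map it to an $\strategicPlanningProblem$ instance by setting $\Practices=\emptyset$, $\unsteerableDemand_\demand=0$ for all $\demand\in\Demands$, and $\costLocation_\location=0$ for all $\location\in\Locations$, keeping only the steerable demands $\steerableDemand_\demand$, the consideration sets $\ConsiderationSet(\demand)$, the per-session capacity $\capacitySession$, and the site capacities $\capacityLocation_\location$. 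One must check that, under these choices, the feasibility condition in the third definition reduces exactly to the multiset multicover constraint and that minimizing $c(\MMUoperation)=\costSession\sum_\location \MMUoperation_\location$ is equivalent to minimizing the number of covering copies.

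Alternatively, if a self-contained argument is preferred, I would reduce directly from a classical strongly NP-hard covering problem such as \textsc{Minimum Set Cover} (or its cardinality-constrained / multicover variants). In that reduction each element of the ground set becomes a demand origin $\demand$ with $\steerableDemand_\demand=1$ and $\unsteerableDemand_\demand=0$, each set becomes a potential MMU site $\location$ whose consideration-set membership encodes ``element is in set,'' capacities are chosen large enough not to bind (say $\capacitySession=\capacityLocation_\location=\card{\Demands}$), setup costs are set to $\costLocation_\location=1$, and $\costSession=0$. Then a strategic MMU operation plan of cost at most $K$ corresponds exactly to a set cover of size at most $K$. Because \textsc{Set Cover} is NP-hard even with unary-encoded, polynomially bounded numbers, and all numeric data in the constructed instance is polynomially bounded, the reduction certifies strong NP-hardness.

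The main obstacle, and the step requiring the most care, is the \emph{unsteerable} demand together with the closest-facility operator $\closestFacility{\MMUoperation}{\demand}$. This operator depends nonlinearly on the plan $\MMUoperation$ (the nearest \emph{operating} facility changes as sites open and close), so one cannot simply ignore it; the clean way to neutralize it in the reduction is to force $\unsteerableDemand_\demand=0$ everywhere, which makes the first sum in the feasibility inequality vanish and removes any dependence on $\dist$. I would verify explicitly that with $\unsteerableDemand_\demand=0$ the feasibility constraint collapses to $\sum_{\demand\in\ConsiderationSet(k)}\assignmentSteerablePatientDemand{\demand}(k)\le\capacitySession\,\MMUoperation_k$ for $k\in\Locations$ (and the vacuous practice case since $\Practices=\emptyset$), so that feasibility of the constructed instance is governed purely by the steerable assignment and the multicover/set-cover structure.

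Finally I would confirm strongness: the reduction is polynomial in size, and crucially every numerical parameter ($\costLocation_\location$, $\costSession$, $\capacitySession$, $\capacityLocation_\location$, $\steerableDemand_\demand$, $\unsteerableDemand_\demand$) is bounded by a polynomial in the input size, so no exponential blow-up in the encoding of numbers occurs. This is precisely what upgrades ordinary NP-hardness to \emph{strong} NP-hardness, and it is the point I would emphasize at the end of the argument.
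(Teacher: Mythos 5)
Your primary reduction is essentially the paper's own proof: the paper sets $\capacitySession=3$, $\Practices=\emptyset$, $\costSession=1$, $\unsteerableDemand_\demand=0$ for all $\demand\in\Demands$, $\costLocation_\location=0$ for all $\location\in\Locations$, and $\capacityLocation_\location=\sum_{\demand\in\Demands}\steerableDemand_\demand$, so that $3$-multiset multicover becomes a special case of the decision version of the $\strategicPlanningProblem$ and strong NP-hardness is inherited from~\cite{KRUMKE2019845}. Two small details differ from your write-up: a $q$-multiset multicover instance has no site capacities to ``keep,'' so $\capacityLocation_\location$ must be \emph{chosen} large enough not to bind (as the paper does), and the paper fixes $\costSession=1$ so that the cost of a plan equals the number of chosen set copies. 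Your fallback reduction from \textsc{Set Cover} ($\steerableDemand_\demand=1$, $\unsteerableDemand_\demand=0$, $\capacitySession=\capacityLocation_\location=\card{\Demands}$, $\costLocation_\location=1$, $\costSession=0$) is also sound and is a genuinely different route: it is self-contained, needing no external strong NP-hardness result, and strongness is immediate since all constructed numbers are polynomially bounded; what it gives up is the sharper structural statement implicit in the paper's argument, namely that hardness already holds for a fixed constant session capacity ($\capacitySession=3$) rather than a capacity that grows with $\card{\Demands}$. Your treatment of the closest-facility operator---forcing $\unsteerableDemand_\demand=0$ so that the unsteerable term and all dependence on $\dist$ vanish---is exactly what the paper's substitution accomplishes, and you are right that this is the one step that genuinely needs care.
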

\begin{proof}
 By setting $\capacitySession=3$, $\Practices=\emptyset$, $\costSession=1$, $\unsteerableDemand_\demand=0$ for all 
	$\demand\in\Demands$, $\costLocation_\location=0$ for all 
	$\location\in\Locations$ and choosing $\capacityLocation_\location$ large enough, 
	e.g.,~$\capacityLocation_\location=\sum_{\demand\in\Demands}d_v$, it becomes evident that $3$-multiset multicover is a special case of the decision version of the $\strategicPlanningProblem$.	
Thus, the strong NP-hardness result for the $\strategicPlanningProblem$ follows directly from the corresponding result for $3$-multiset multicover in Krumke et al.~\cite{KRUMKE2019845}.

\end{proof}

To solve the $\strategicPlanningProblem$, we present an integer linear programming formulation that we subsequently solve by a Benders decomposition approach.
Let variables
$y_\location \in \{0,1\}$ indicate whether site $\location \in \Locations$ is set up, let variables $x_\location \in \N$ denote the number of weekly operated MMU sessions 
at site $\location \in \Locations$, and let variables $z_{\demand k}\in \N$ determine the 
weekly steerable demand originating in demand origin $\demand \in \Demands$ that is 
assigned to the treatment facility $k \in \ConsiderationSet(\demand)$.
Moreover, let variables $\varUnsteerableDemand_{\demand k} \in \{0,1\}$ indicate the 
closest operating treatment facility $k \in \ConsiderationSet(\demand)$ that is 
targeted by all unsteerable demands originating in $\demand \in \Demands$. 
To that end, let $\Order{\demand} \colon \{1,\dots, 
|\ConsiderationSet(\demand)|\} \to \ConsiderationSet(\demand)$ define an order on the  consideration set  $\ConsiderationSet(\demand)$ that is non-decreasing with respect to the treatment facility's distance $\dist\colon \Demands \times (\Locations \cup \Practices) \to \N$ to demand origin $\demand\in \Demands$.
As a result, $\order{\demand}{1}\in \ConsiderationSet(\demand)$ denotes the closest treatment facility to demand origin $\demand\in \Demands$.
To ease notation, we denote all potential MMU operation sites and practices within the consideration set of demand origin $\demand \in \Demands$ by $\ConsiderationSet_\Locations(\demand) \coloneqq \ConsiderationSet(\demand)\cap \Locations$ and $\ConsiderationSet_\Practices(\demand) \coloneqq \ConsiderationSet(\demand)\cap \Practices$, respectively.
We can now formulate the $\strategicPlanningProblem$ as follows:

\begin{mini!}|s|[2]
	{y,x,z,\varUnsteerableDemand}
	{\sum_{\location\in \Locations} \costLocation_\location \, 
		y_\location + \sum_{\location \in \Locations} \costSession \, 
		x_\location \label{1}}
	{\label{IP}}
	{\OriginalFormulation\quad}
	\addConstraint{x_\location}{\leq \capacityLocation_\location \, y_\location }{\quad \forall \location\in \Locations \label{2}}
	\addConstraint{\sum_{k\in \ConsiderationSet(\demand)} z_{\demand k}}{\geq 
		\steerableDemand_\demand}{\quad \forall \demand\in \Demands\label{3}}
	\addConstraint{\sum_{\demand\in \ConsiderationSet(\location)}  z_{\demand \location} + \sum_{\demand\in \ConsiderationSet(\location)}
		\unsteerableDemand_\demand \, \varUnsteerableDemand_{\demand \location}}{\leq 
		\capacitySession \, x_\location}{\quad \forall \location\in \Locations \label{4}}
	\addConstraint{\sum_{\demand\in \ConsiderationSet(\practice)} z_{\demand \practice} +  \sum_{\demand\in \ConsiderationSet(\practice)}
		\unsteerableDemand_\demand \, \varUnsteerableDemand_{\demand \practice}}{\leq 
		\capacityPractice_\practice}{\quad \forall \practice\in \Practices \label{5}}
	\addConstraint{\sum_{k\in \ConsiderationSet(\demand)} \varUnsteerableDemand_{\demand k}}{\geq 
		1}{\quad \forall \demand\in \Demands \label{6}}
	\addConstraint{\varUnsteerableDemand_{\demand \location}}{\leq y_\location}{\quad \forall \demand 
		\in \Demands,\, \forall \location \in \ConsiderationSet_\Locations(\demand)
		\label{7}}
	\addConstraint{\varUnsteerableDemand_{\demand \location}}{\geq y_\location - 
		\sum_{i=1}^{\invorder{\demand}{\location} -1 } \varUnsteerableDemand_{\demand, 
			\order{\demand}{i}}}{\quad \forall \demand \in \Demands,\, \forall \location \in  
		\ConsiderationSet_\Locations(\demand) \label{8}}
	\addConstraint{\varUnsteerableDemand_{\demand \practice}}{\geq 1 - 
		\sum_{i=1}^{\invorder{\demand}{\practice} -1} \varUnsteerableDemand_{\demand, 
			\order{\demand}{i}}}{\quad \forall \demand \in \Demands,\, \forall \practice \in 
		\ConsiderationSet_\Practices(\demand)   \label{9}}
	\addConstraint{x_\location\in \N,\; y_\location\in \{0,1\} }{}{\quad \forall \location \in 
		\Locations}
	\addConstraint{\varUnsteerableDemand_{\demand k}\in \{0,1\},\;z_{\demand k}\in \N}{}{\quad \forall \demand \in \Demands,\, \forall k\in 
		\ConsiderationSet(\demand).}	
\end{mini!}

In this formulation, constraints \eqref{2} enforce the session capacity at each set up site,
inequalities \eqref{3} model the assignment of the steerable patient demands,
and constraints \eqref{4}--\eqref{5} guarantee that the treatment capacities at each treatment facility are adhered to.
Moreover, inequalities \eqref{6}--\eqref{9} ensure that unsteerable patient demands target their closest considered operating treatment facility.
Showing that $\OriginalFormulation$ is indeed a formulation for the \mbox{$\strategicPlanningProblem$} is rather straight forward but slightly technical.
We therefore only state the result at this point and provide a formal proof in Appendix~\ref{appendix:detcorrect}.

\begin{restatable}{thm2}{DetCor}
\label{thm_first_formulation}
$\OriginalFormulation$ is an integer linear formulation for the $\strategicPlanningProblem$.
\end{restatable}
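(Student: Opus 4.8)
The plan is to prove that $\OriginalFormulation$ is a valid formulation for the $\strategicPlanningProblem$ by exhibiting a cost-consistent, two-directional correspondence between feasible strategic MMU operation plans and feasible solutions of $\OriginalFormulation$; this yields that the optimal objective value of the integer program equals the minimum cost of a feasible plan and that optimal solutions translate into one another. Concretely, I would set up two maps. Given a feasible plan $\MMUoperation$ together with an assignment $\{\assignmentSteerablePatientDemand{\demand}\}_{\demand \in \Demands}$, I define $x_\location \coloneqq \MMUoperation_\location$, set $y_\location \coloneqq 1$ if $\MMUoperation_\location > 0$ and $y_\location \coloneqq 0$ otherwise, put $z_{\demand k} \coloneqq \assignmentSteerablePatientDemand{\demand}(k)$, and set $\varUnsteerableDemand_{\demand k} \coloneqq 1$ exactly for $k = \closestFacility{\MMUoperation}{\demand}$. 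Conversely, given a feasible $(y,x,z,\varUnsteerableDemand)$, I read off the plan $\MMUoperation_\location \coloneqq x_\location$ and the assignment $\assignmentSteerablePatientDemand{\demand}(k) \coloneqq z_{\demand k}$, truncating any surplus so that \eqref{3} holds with equality, which only relaxes \eqref{4}--\eqref{5}.

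For the forward direction I would verify that the candidate satisfies \eqref{2}--\eqref{9} with matching cost. Constraints \eqref{2} and \eqref{3} are immediate from the definitions of a plan and an assignment, and the objective \eqref{1} evaluates to $\sum_{\location:\MMUoperation_\location>0}\costLocation_\location + \sum_\location \costSession\,\MMUoperation_\location = c(\MMUoperation)$ by the choice of $y$. The capacity inequalities \eqref{4}--\eqref{5} hold because $\sum_{\demand} \unsteerableDemand_\demand \varUnsteerableDemand_{\demand k} = \sum_{\demand:\closestFacility{\MMUoperation}{\demand}=k}\unsteerableDemand_\demand$ and $\sum_\demand z_{\demand k} = \sum_\demand \assignmentSteerablePatientDemand{\demand}(k)$, so each reduces exactly to the feasibility condition of the plan. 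The only delicate part is \eqref{6}--\eqref{9}: since $y_\location=1$ iff $\location$ is operating, the facility $\closestFacility{\MMUoperation}{\demand}$ is precisely the closest available facility in the order $\Order{\demand}$; every strictly closer facility is a non-operating site carrying $\varUnsteerableDemand=0$, so the cascading lower bounds in \eqref{8}--\eqref{9} are met at $\closestFacility{\MMUoperation}{\demand}$ and become vacuous (non-positive right-hand side) at every farther facility, while \eqref{7} and \eqref{6} hold trivially.

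For the reverse direction the crux is a lemma stating that \eqref{7}--\eqref{9} force $\varUnsteerableDemand_{\demand \hat k}=1$ for the closest facility $\hat k$ in $\ConsiderationSet(\demand)$ that is \emph{available}, where available means a practice or a site with $y_\location=1$. I would prove this by walking along the order $\Order{\demand}$: every facility before $\hat k$ is a site with $y_\location=0$, hence carries $\varUnsteerableDemand=0$ by \eqref{7}, so the subtracted sum on the right-hand side of \eqref{8}/\eqref{9} vanishes and forces $\varUnsteerableDemand_{\demand \hat k}\geq 1$. The remaining obstacle is that $y_\location=1$ only certifies set-up, not operation, so $\hat k$ need not equal $\closestFacility{\MMUoperation}{\demand}$ when $\hat k$ is an idle site with $x_{\hat k}=0$. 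Here I would invoke \eqref{4}: an idle site has right-hand side $\capacitySession\,x_{\hat k}=0$, whence $\unsteerableDemand_\demand \varUnsteerableDemand_{\demand \hat k}=0$ forces $\unsteerableDemand_\demand=0$ and makes the routing irrelevant. Consequently, for every $\demand$ with $\unsteerableDemand_\demand>0$ the marked facility is genuinely operating and equals $\closestFacility{\MMUoperation}{\demand}$, so that $\sum_{\demand:\closestFacility{\MMUoperation}{\demand}=k}\unsteerableDemand_\demand \leq \sum_\demand \unsteerableDemand_\demand\varUnsteerableDemand_{\demand k}$, and \eqref{4}--\eqref{5} then imply the plan's capacity conditions. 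Since $y_\location=1$ whenever $x_\location>0$ and $\costLocation_\location\geq 0$, the plan cost is at most the objective value, so the optimal values coincide and optimal solutions correspond.

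The step I expect to be the main obstacle is exactly this reconciliation of the set-up indicator $y$ with genuine operation $x_\location>0$ inside the closest-operating-facility logic, together with the order-based argument for \eqref{8}--\eqref{9}; both require careful bookkeeping along $\Order{\demand}$. A minor technicality to dispose of is the degenerate demand origin whose consideration set contains no available facility: such an origin either has $\steerableDemand_\demand=\unsteerableDemand_\demand=0$ and can be ignored, or it simultaneously renders the plan infeasible and makes \eqref{6} unsatisfiable, so the two notions of feasibility still agree.
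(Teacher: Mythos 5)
Your proof is correct, and its skeleton matches the paper's (Appendix~\ref{appendix:detcorrect}): a cost-preserving translation in each direction, an order-walking argument showing that constraints \eqref{7}--\eqref{9} force the closest available facility to be marked, and a truncation of $z$ so that \eqref{3} holds with equality (the paper's Lemma~\ref{lem:steerable_demand}). Two details differ, both in your favor. First, the paper normalizes $\varUnsteerableDemand$ so that each origin marks at most one facility (Lemma~\ref{lem:feasible_1}) and then proves an if-and-only-if characterization of the marked facility (Lemma~\ref{lem:feasible_2}); you skip the normalization entirely and instead use the one-sided bound $\sum_{\demand: \closestFacility{\MMUoperation}{\demand}=k}\unsteerableDemand_\demand \le \sum_\demand \unsteerableDemand_\demand \varUnsteerableDemand_{\demand k}$, which is all the capacity comparison actually needs. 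Second, and more substantively, you explicitly reconcile the set-up indicator $y$ with genuine operation $x_\location>0$: the paper's Lemma~\ref{lem:feasible_2} characterizes the marked facility as the closest one with $y_k=1$ (or a practice), whereas feasibility of the plan $\MMUoperation=x$ refers to the closest facility with $x_k>0$, and the paper's final proof treats these as interchangeable without comment. Your observation that constraint \eqref{4} forces $\unsteerableDemand_\demand=0$ whenever the marked site is set up but idle is exactly the glue needed to close that gap, so your write-up is, if anything, more complete than the paper's. You also spell out the forward direction (plan to IP solution), which the paper leaves implicit but which is required for the claim that the optimal values coincide.
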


The $\strategicPlanningProblem$ determines the set up MMU operation sites, the 
number of weekly sessions operated per site, as well as the assignment of the 
steerable patient demands to the treatment facilities.
In the subsequent section, we assume that the actual patient demands are uncertain and reveal themselves only
after we have fixed our decisions regarding the set up sites and operated MMU sessions.
Within this setting, it is no longer expedient to determine one fixed assignment of the steerable patient demands that is feasible for all demand realizations.
Instead, we model a flexible assignment of the steerable demands that can be adjusted once the actual demands are known.

Adding assignment variables for every potential demand realization to $\OriginalFormulation$ leads to a huge model extension that is likely to be computationally intractable.
We therefore propose an alternative formulation for the deterministic $\strategicPlanningProblem$ that considers the steerable patient demands in a subproblem and is thus much better suited to uncertain patient demands.
To that end, we extend the results in~\cite{KRUMKE2019845} and employ a Benders decomposition approach to $\OriginalFormulation$ that  decides and fixes the strategic MMU operation plan in the master problem and only checks the plan's feasibility in the subproblem.  
More precisely, we choose our first stage variables to be $y$, $x$, and $w$ and our second stage variables to be $z$.
The resulting equivalent reformulation of $\OriginalFormulation$ then reads
\begin{mini!}|s|[2]
	{y,x,\varUnsteerableDemand}
	{\sum_{\location\in \Locations} \costLocation_\location \, 
		y_\location + \sum_{\location \in \Locations} \costSession \, 
		x_\location \label{}}
	{}{\MasterProblem\quad}
		\addConstraint{\BendersSubproblem{y}{x}{\varUnsteerableDemand} }{\text{ is feasible} \label{3g}}{}{}
	\addConstraint{x_\location}{\leq \capacityLocation_\location \, y_\location }{\quad \forall \location\in \Locations \label{}}
	\addConstraint{\sum_{k\in \ConsiderationSet(\demand)} \varUnsteerableDemand_{\demand k}}{\geq 
		1}{\quad \forall \demand\in \Demands \label{}}
	\addConstraint{\varUnsteerableDemand_{\demand \location}}{\leq y_\location}{\quad \forall \demand 
		\in \Demands,\, \forall \location \in \ConsiderationSet_\Locations(\demand) 
		\label{}}
	\addConstraint{\varUnsteerableDemand_{\demand \location}}{\geq y_\location - 
		\sum_{i=1}^{\invorder{\demand}{\location} -1 } \varUnsteerableDemand_{\demand, 
			\order{\demand}{i}}}{\quad \forall \demand \in \Demands,\, \forall \location \in  
		\ConsiderationSet_\Locations(\demand) \label{}}
	\addConstraint{\varUnsteerableDemand_{\demand \practice}}{\geq 1 - 
		\sum_{i=1}^{\invorder{\demand}{\practice} -1} \varUnsteerableDemand_{\demand, 
			\order{\demand}{i}}}{\quad \forall \demand \in \Demands,\, \forall \practice \in 
		\ConsiderationSet_\Practices(\demand) \label{}}
	\addConstraint{x_\location\in \N,\; y_\location\in \{0,1\} }{}{\quad \forall \location \in 
	\Locations}
	\addConstraint{\varUnsteerableDemand_{\demand k}}{\in \{0,1\}}{\quad \forall \demand \in \Demands,\, \forall k\in 
		\ConsiderationSet(\demand), \label{}}
\end{mini!}
where $\BendersSubproblem{\hat{y}}{\hat{x}}{\hat{w}}$ denotes the Benders subproblem for fixed first-stage decisions $\hat{y}$, $\hat{x}$, and $\hat{\varUnsteerableDemand}$, which is defined as
\begin{mini!}|s|[0]
	{z}
	{0}
	{}{\BendersSubproblem{\hat{y}}{\hat{x}}{\hat{w}}\quad}
	\addConstraint{\sum_{k\in \ConsiderationSet(\demand)} z_{\demand k}}{\geq 
		\steerableDemand_\demand}{\quad \forall \demand\in \Demands\label{4b}}
	\addConstraint{\sum_{\demand\in \ConsiderationSet(\location)}  z_{\demand \location} }{\leq 
		\capacitySession \, \hat{x}_\location - \sum_{\demand\in \ConsiderationSet(\location)} 
		\unsteerableDemand_\demand \, \hat{\varUnsteerableDemand}_{\demand \location}}{\quad \forall \location\in \Locations \label{4c}}
	\addConstraint{\sum_{\demand\in \ConsiderationSet(\practice)}z_{\demand \practice} }{\leq 
		\capacityPractice_\practice - \sum_{\demand\in \ConsiderationSet(\practice)}  
		\unsteerableDemand_\demand \, \hat{\varUnsteerableDemand}_{\demand \practice} }{\quad \forall \practice\in \Practices \label{4d}}
	\addConstraint{z_{\demand k}}{\in \N}{\quad \forall \demand \in \Demands,\, \forall k\in 
		\ConsiderationSet(\demand). \label{4e}}
\end{mini!}

Next, we investigate the feasibility of the Benders subproblem $\BendersSubproblem{\hat{y}}{\hat{x}}{\hat{w}}$ to derive Benders feasibility cuts which enforce constraint \eqref{3g}.
Let us first note, that the constraint matrix of 
$\BendersSubproblem{\hat{y}}{\hat{x}}{\hat{w}}$ is totally unimodular.

\begin{lemma}
	\label{lem:TU}
	The constraint matrix of $\BendersSubproblem{\hat{y}}{\hat{x}}{\hat{w}}$ is totally unimodular.
\end{lemma}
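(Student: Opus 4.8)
The plan is to exhibit the coefficient matrix of $\BendersSubproblem{\hat{y}}{\hat{x}}{\hat{w}}$ as the node--edge incidence matrix of a bipartite graph and then invoke the classical theorem that such matrices are totally unimodular.

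First I would fix the layout of the matrix. The only decision variables in the subproblem are the $z_{\demand k}$ for $\demand \in \Demands$ and $k \in \ConsiderationSet(\demand)$, so the columns of the coefficient matrix are indexed by the pairs $(\demand, k)$, while the rows correspond to the three families of constraints \eqref{4b}, \eqref{4c}, and \eqref{4d} (the right-hand sides depend on the fixed first-stage quantities $\hat{x}$ and $\hat{\varUnsteerableDemand}$, but these do not enter the coefficient matrix). The key structural observation, which I would verify directly from the constraints, is that every variable $z_{\demand k}$ occurs with coefficient $+1$ in exactly two rows: the demand row \eqref{4b} of its origin $\demand$, and the single facility row containing $k$ --- namely the location row \eqref{4c} when $k = \location \in \Locations$, or the practice row \eqref{4d} when $k = \practice \in \Practices$. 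Hence each column of the coefficient matrix has exactly two nonzero entries, both equal to $+1$.

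With this in hand, the matrix is precisely the incidence matrix of the bipartite graph whose vertex set is $\Demands \cup (\Locations \cup \Practices)$ and which has an edge $\{\demand, k\}$ exactly when $k \in \ConsiderationSet(\demand)$ (equivalently, whenever $z_{\demand k}$ is a variable); the demand origins form one side of the bipartition and the treatment facilities the other. I would then conclude by citing the standard result that the node--edge incidence matrix of a bipartite graph is totally unimodular, or, equivalently, by applying the Heller--Tompkins criterion directly: partition the rows into the demand rows $R_1$ (from \eqref{4b}) and the facility rows $R_2$ (from \eqref{4c}--\eqref{4d}); since each column splits its two $+1$ entries as one in $R_1$ and one in $R_2$, the criterion is met and total unimodularity follows.

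There is essentially no hard step here --- the statement is a clean structural consequence --- so the only thing requiring care is the bookkeeping in the second step: confirming that no variable appears in more than one facility constraint (which holds because each facility $k$ is either a single location or a single practice, never both) and noting that the inequality directions ($\geq$ in \eqref{4b} versus $\leq$ in \eqref{4c}--\eqref{4d}) are irrelevant, since total unimodularity is a property of the coefficient matrix alone and is preserved under negating individual rows.
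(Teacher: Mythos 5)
Your proposal is correct and follows essentially the same route as the paper: both observe that every column has its (at most) two unit entries split between the demand rows \eqref{4b} and the facility rows \eqref{4c}--\eqref{4d}, and then invoke the classical partition criterion (the paper cites it as the theorem of Hoffman and Gale, which is the Heller--Tompkins result you name; your bipartite-incidence-matrix phrasing is the same fact in graph language). The extra bookkeeping you flag --- that each facility lies in exactly one of $\Locations$ or $\Practices$, and that inequality directions are irrelevant to total unimodularity --- is accurate and harmless.
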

 \begin{proof}
 	All entries in the constraint matrix of $\BendersSubproblem{\hat{y}}{\hat{x}}{\hat{w}}$ are in $\{0,1\}$.
 	Moreover, in every column of the constraint matrix at most two entries take the value $1$: one in the rows corresponding to constraints \eqref{4b} and one in the rows corresponding to constraints \eqref{4c} or \eqref{4d}. 
 	Thus, by partitioning the rows of our constraint matrix into the rows corresponding to constraints \eqref{4b}, and the rows that correspond to the constraints \eqref{4c} and \eqref{4d}, the total unimodularity of the constraint matrix of $\BendersSubproblem{\hat{y}}{\hat{x}}{\hat{w}}$ follows directly from the theorem of Hoffman and Gale~\cite{heller1956extension}.
 \end{proof}

As the right hand sides of the constraints in $\BendersSubproblem{\hat{y}}{\hat{x}}{\hat{w}}$ are integral, Lemma~\ref{lem:TU} and Cramer's rule yield that the LP-relaxation of $\BendersSubproblem{\hat{y}}{\hat{x}}{\hat{w}}$ has an integer solution whenever it is feasible.
Thus, we can relax constraints \eqref{4e} and get the following result.
\begin{corollary}
	The Benders subproblem $\BendersSubproblem{\hat{y}}{\hat{x}}{\hat{w}}$ is feasible if and only if its LP-relaxation $\LPBendersSubproblem{\hat{y}}{\hat{x}}{\hat{w}}$ is feasible.
	\label{cor:LPrelax}
\end{corollary}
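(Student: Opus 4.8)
The plan is to prove the two implications separately, with the forward direction being immediate and the reverse direction leaning entirely on the total unimodularity established in Lemma~\ref{lem:TU}. For the ``only if'' direction I would simply observe that $\LPBendersSubproblem{\hat{y}}{\hat{x}}{\hat{w}}$ arises from $\BendersSubproblem{\hat{y}}{\hat{x}}{\hat{w}}$ by relaxing the integrality requirement \eqref{4e} to $z_{\demand k}\geq 0$. The feasible region of the integer subproblem is therefore contained in that of its relaxation, so any feasible integer assignment $z$ is in particular a feasible fractional one; feasibility of $\BendersSubproblem{\hat{y}}{\hat{x}}{\hat{w}}$ thus implies feasibility of $\LPBendersSubproblem{\hat{y}}{\hat{x}}{\hat{w}}$.

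For the ``if'' direction I would start from a feasible $\LPBendersSubproblem{\hat{y}}{\hat{x}}{\hat{w}}$ and construct an integral feasible point. The feasible region $Q$ of the relaxation is a nonempty polyhedron contained in the nonnegative orthant $\{z : z\geq 0\}$, which contains no line and is hence pointed; consequently $Q$ possesses at least one vertex $z^{\ast}$. Every vertex of $Q$ is the unique solution of a subsystem $B\,z^{\ast}=b'$, where $B$ is a nonsingular square submatrix formed by $\card{\{(\demand,k)\}}$ rows selected from the constraint matrix of $\BendersSubproblem{\hat{y}}{\hat{x}}{\hat{w}}$ together with the nonnegativity rows, and $b'$ is the corresponding right-hand side vector. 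Appending the rows of an identity matrix (the bounds $z_{\demand k}\geq 0$) to the totally unimodular matrix of Lemma~\ref{lem:TU} preserves total unimodularity, so every nonsingular square submatrix, and in particular $B$, satisfies $\det(B)\in\{-1,+1\}$.

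The key step is then Cramer's rule: since all entries of $b'$ are integral --- the right-hand sides $\steerableDemand_\demand$, $\capacitySession\,\hat{x}_\location - \sum_{\demand} \unsteerableDemand_\demand\,\hat{\varUnsteerableDemand}_{\demand\location}$, $\capacityPractice_\practice - \sum_{\demand} \unsteerableDemand_\demand\,\hat{\varUnsteerableDemand}_{\demand\practice}$, and $0$ are all integers --- each coordinate $z^{\ast}_{\demand k}=\det(B_{\demand k})/\det(B)$, where $B_{\demand k}$ denotes $B$ with its column indexed by $(\demand,k)$ replaced by $b'$, is a ratio of integers with denominator $\pm 1$ and hence itself an integer. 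Therefore $z^{\ast}$ is an integral point of $Q$ satisfying \eqref{4b}--\eqref{4d} as well as the integrality requirement \eqref{4e}, so it is feasible for $\BendersSubproblem{\hat{y}}{\hat{x}}{\hat{w}}$, which closes the equivalence.

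I expect the only point requiring genuine care to be the guarantee that $Q$ actually admits a vertex: this is exactly where the containment in the nonnegative orthant (and hence pointedness) is invoked, since without a vertex the total unimodularity argument would have no integral extreme point on which to act. Everything else is a standard application of Lemma~\ref{lem:TU}, the integrality of the data $\steerableDemand_\demand,\unsteerableDemand_\demand,\capacitySession,\capacityPractice_\practice\in\N$ and $\hat{x}_\location\in\N$, $\hat{\varUnsteerableDemand}_{\demand k}\in\{0,1\}$, and Cramer's rule.
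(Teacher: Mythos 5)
Your proposal is correct and follows essentially the same route as the paper, which likewise derives the result from Lemma~\ref{lem:TU}, the integrality of the right-hand sides, and Cramer's rule; you merely spell out the details the paper leaves implicit (pointedness of the feasible region in the nonnegative orthant, hence existence of a vertex, and preservation of total unimodularity when the nonnegativity rows are appended). No gaps.
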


In order to obtain our Benders feasibility cuts, we exploit the fact that $\LPBendersSubproblem{\hat{y}}{\hat{x}}{\hat{w}}$ is the decision version of a maximum flow problem.
To ease notation, we define the \emph{residual treatment capacity} of a treatment facility in the Benders subproblem as the treatment capacity that remains after the assignment of the unsteerable patient demands is fixed, i.e.,

\begin{align*}
\helper_\location&\coloneqq\capacitySession\, \hat{x}_\location - 
\sum_{\demand\in\ConsiderationSet(\location)}\unsteerableDemand_\demand\, \hat{\varUnsteerableDemand}_{\demand 
	\location} & \forall \location\in\Locations,\\
\helper_\practice&\coloneqq\capacityPractice_\practice 
-\sum_{\demand\in\ConsiderationSet(\practice)}\unsteerableDemand_\demand\,
\hat{\varUnsteerableDemand}_{\demand 
	\practice} & \forall \practice\in\Practices.
\end{align*}
Throughout this paper, we will always assume that the residual treatment capacities are non-negative.

\begin{assumption}
	\label{ass:1}
	For all feasible solutions $(\hat{y},\hat{x},\hat{w})$ to the master problem $\MasterProblem$ without constraint \eqref{3g}, it holds that the residual capacities $\helper_k \geq 0$ for all $k\in \Locations \cup \Practices$.
\end{assumption}

\noindent
Obviously, Assumption~\ref{ass:1} does not hold in general.
However, we can easily enforce Assumption~\ref{ass:1} by adding additional constraints to $\MasterProblem$. 
As this does not offer new insights but only complicates our formulation, we cover the explicit enforcement of Assumption~\ref{ass:1} in Appendix~\ref{appendix:enforcing_assumption}.

The flow network corresponding to $\LPBendersSubproblem{\hat{y}}{\hat{x}}{\hat{w}}$ is now constructed as follows.
Let $G$ be the directed graph with vertex set 
$V(G)=\{s\}\cup\Demands\cup\left(\Locations\cup \Practices\right)\cup  \{t\}$ and 
arc set $E(G)=E_1\cup E_2\cup E_3$, where 
\begin{align*}
E_1&\coloneqq\{(s,\demand)\colon \demand\in\Demands\},\\
E_2&\coloneqq\{(k,t)\colon k\in \Locations\cup \Practices\},\\
E_3&\coloneqq\{(\demand, k)\colon \demand\in \Demands, k \in \ConsiderationSet(\demand)\}.
\end{align*}
We set the capacities of arcs $e_1=(s,\demand)\in E_1$ to $\arcCapacity(e_1)\coloneqq\steerableDemand_\demand$ and the capacities 
of arcs $e_2=(k,t)\in E_2$ to $\arcCapacity(e_2)\coloneqq\helper_k$. The capacities of all arcs $e_3\in E_3$ are set to $\arcCapacity(e_3)\coloneqq\infty$.
Note, that this choice of arc capacities requires Assumption~\ref{ass:1} to hold as we might otherwise end up with negative arc capacities. 
An example of the constructed network $(G,\arcCapacity,s,t)$ can be found in Figure~\ref{fig:network}.
The following now holds true.

\begin{figure}
	\centering
	\includegraphics{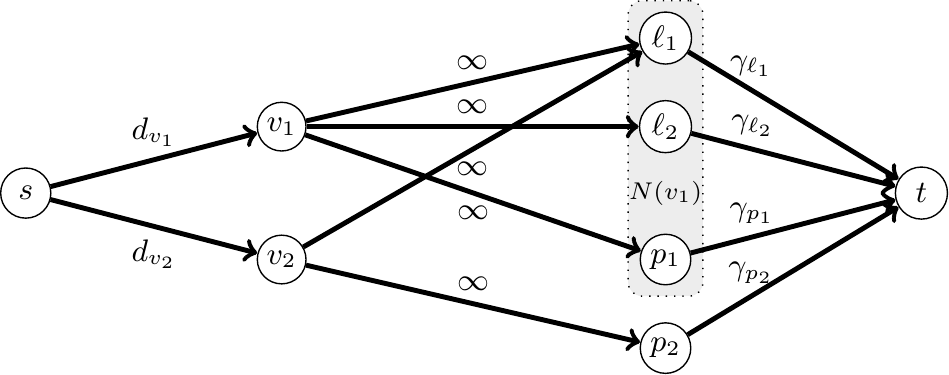}
\caption{Example for the network $(G,\arcCapacity,s,t)$ constructed for Lemma~\ref{BendersEquiv}.}
\label{fig:network}
\end{figure}

\begin{lemma}
	\label{BendersEquiv}
	The Benders subproblem $\BendersSubproblem{\hat{y}}{\hat{x}}{\hat{w}}$ is feasible if and only if the maximum $s$-$t$-flow in the network $(G,\arcCapacity,s,t)$ has a flow value of at least 
	$D:=\sum_{\demand\in\Demands}\steerableDemand_\demand$.
\end{lemma}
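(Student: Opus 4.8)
The plan is to identify the assignment variables $z_{\demand k}$ of the Benders subproblem with the flow on the arcs $E_3$ of the network $(G,\arcCapacity,s,t)$ and to read off the two families of subproblem constraints as capacity conditions on the source and sink side that are linked through flow conservation. The crucial structural observation is that the arcs leaving $s$ form an $s$-$t$-cut of capacity $\sum_{\demand\in\Demands}\arcCapacity(s,\demand)=\sum_{\demand\in\Demands}\steerableDemand_\demand=D$. Hence the maximum flow value can never exceed $D$, and so ``the maximum flow value is at least $D$'' is equivalent to ``the maximum flow value equals $D$'', which in turn forces every source arc $(s,\demand)$ to be saturated. This equivalence is exactly what will translate into the demand-covering constraints~\eqref{4b}.

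For the direction from feasibility to flow, I would start with a feasible assignment $z$ for $\BendersSubproblem{\hat{y}}{\hat{x}}{\hat{w}}$. Since decreasing any $z_{\demand k}$ only relaxes the capacity constraints~\eqref{4c} and~\eqref{4d}, I may assume without loss of generality that~\eqref{4b} holds with equality, i.e.\ $\sum_{k\in\ConsiderationSet(\demand)} z_{\demand k}=\steerableDemand_\demand$. I then define a flow $f$ by $f(\demand,k)\coloneqq z_{\demand k}$ on $E_3$, $f(s,\demand)\coloneqq\steerableDemand_\demand$ on $E_1$, and $f(k,t)\coloneqq\sum_{\demand\in\ConsiderationSet(k)} z_{\demand k}$ on $E_2$. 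Flow conservation at each $\demand$ and each $k$ is immediate from these definitions, the capacity of every $E_1$-arc is met with equality, the $E_3$-arcs are uncapacitated, and the $E_2$-arc capacities hold precisely because $f(k,t)\le\helper_k=\arcCapacity(k,t)$ by~\eqref{4c} and~\eqref{4d}; here Assumption~\ref{ass:1} guarantees that the capacities $\helper_k$ are nonnegative so that $f$ is a valid flow. Its value equals $\sum_{\demand\in\Demands}\steerableDemand_\demand=D$, so the maximum flow is at least $D$.

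For the converse, I would take a maximum flow of value at least $D$. By the cut bound above it has value exactly $D$ and saturates all source arcs, so $f(s,\demand)=\steerableDemand_\demand$ for every $\demand\in\Demands$. Because the data and the residual capacities $\helper_k$ are integral (by Assumption~\ref{ass:1} together with integrality of $\capacitySession,\hat{x},\unsteerableDemand,\hat{\varUnsteerableDemand},\capacityPractice$), I may choose $f$ to be an integral maximum flow via the integral flow theorem. Setting $z_{\demand k}\coloneqq f(\demand,k)$ then yields a nonnegative integral vector; flow conservation at $\demand$ gives $\sum_{k\in\ConsiderationSet(\demand)} z_{\demand k}=f(s,\demand)=\steerableDemand_\demand$, establishing~\eqref{4b}, while the sink-arc capacities give $\sum_{\demand\in\ConsiderationSet(k)} z_{\demand k}=f(k,t)\le\arcCapacity(k,t)=\helper_k$, establishing~\eqref{4c} and~\eqref{4d}. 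Thus $z$ is feasible for $\BendersSubproblem{\hat{y}}{\hat{x}}{\hat{w}}$.

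The only genuinely delicate point is the integrality in the converse direction; this can be handled either by invoking the integral maximum-flow theorem as above or, alternatively, by appealing to Corollary~\ref{cor:LPrelax}, which already reduces feasibility of the subproblem to feasibility of its LP relaxation and thus lets me argue with a possibly fractional maximum flow throughout. Everything else reduces to the bookkeeping of matching the constraints~\eqref{4b}--\eqref{4d} against the arc capacities of the three arc sets $E_1$, $E_2$, $E_3$, which the construction of $f$ makes transparent.
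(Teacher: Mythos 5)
Your proposal is correct and follows essentially the same route as the paper: the same network, the same observation that the cut at the source has capacity $D$ (forcing any flow of value at least $D$ to saturate all source arcs), the same identification of $z_{\demand k}$ with the flow on $E_3$, and the same use of Corollary~\ref{cor:LPrelax} (or equivalently integral max-flow) to pass between the LP relaxation and the integer subproblem. The only difference is cosmetic: you spell out the direction the paper dismisses as ``analogous,'' including the worthwhile detail that one may assume constraints~\eqref{4b} hold with equality before constructing the flow.
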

\begin{proof}
	Let $f\colon E(G) \to \mathbb{R}_+$ be an $s$-$t$ flow in $(G,\arcCapacity,s,t)$ of value $\text{value}(f) \geq D$.
	We define a solution for $\LPBendersSubproblem{\hat{y}}{\hat{x}}{\hat{w}}$ by setting $z_{\demand k}\coloneqq f((\demand,k))$ for all $\demand \in \Demands$, $k \in \ConsiderationSet(\demand)$ and show that $z$ is feasible.
As the $s$-$t$ cut induced by $S\coloneqq\{s\}$ has capacity $\arcCapacity(\delta^+(S))= D$, it follows that  $\text{value}(f) = D$.
It must thus hold for all arcs $(s,v) \in E_1$ that $f((s,v)) =  \steerableDemand_\demand$ and by flow-conservation we get that for all $v\in V$ 
\begin{align*}
	&\sum_{k\in \ConsiderationSet(\demand)} z_{\demand k} =\sum_{k\in \ConsiderationSet(\demand)} f((\demand,k)) = \sum_{e\in \delta^+(\demand)} f(e) = \sum_{e\in \delta^-(\demand)} f(e) = f((s,\demand)) = \steerableDemand_\demand.
\intertext{
Moreover, for $k \in \Locations \cup \Practices$ we have that 
}
	&\sum_{\demand\in \ConsiderationSet(k)}z_{\demand k} = \sum_{\demand\in \ConsiderationSet(k)} f((\demand,k)) = \sum_{e\in \delta^-(k)} f(e) = \sum_{e\in \delta^+(k)} f(e) = f((k,t)) \leq \arcCapacity((k,t)) = \helper_k.
\end{align*}
As a result, $z$ defines a feasible solution for $\LPBendersSubproblem{\hat{y}}{\hat{x}}{\hat{w}}$ which implies the feasibility of $\BendersSubproblem{\hat{y}}{\hat{x}}{\hat{w}}$ by Corollary~\ref{cor:LPrelax}.
The converse direction can be shown analogously.
\end{proof}

We can now combine our intermediate results to derive Benders feasibility cuts by the application of the max-flow min-cut theorem~\cite{10.5555/137406}.

\begin{theorem}
	The Benders subproblem $\BendersSubproblem{\hat{y}}{\hat{x}}{\hat{w}}$ is feasible if and only if
	\begin{align}
&  \sum_{\demand \in U} \steerableDemand_\demand + \sum_{k\in N(U)} \sum_{\demand \in N(k)} \unsteerableDemand_\demand \,\hat{\varUnsteerableDemand}_{\demand k} \leq 
\sum_{\location \in \ConsiderationSet_\Locations(U)} \capacitySession \,\hat{x}_\location   + \sum_{\practice \in \ConsiderationSet_\Practices(U)} \capacityPractice_\practice \quad  &\forall U\subseteq \Demands,
\label{BendersOptCuts}
\end{align}
 where	$\ConsiderationSet(U)\coloneqq  \bigcup_{\demand \in U} \ConsiderationSet(\demand)$, $\ConsiderationSet_\Locations(U)\coloneqq  \ConsiderationSet(U) \cap \Locations$, and $\ConsiderationSet_\Practices(U)\coloneqq  \ConsiderationSet(U) \cap \Practices$ for $U\subseteq \Demands$.
 \label{thm:bendersoptcuts}
\end{theorem}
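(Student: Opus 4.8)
The plan is to invoke Lemma~\ref{BendersEquiv}, which reduces the feasibility of $\BendersSubproblem{\hat{y}}{\hat{x}}{\hat{w}}$ to the existence of an $s$-$t$-flow of value at least $D=\sum_{\demand\in\Demands}\steerableDemand_\demand$ in $(G,\arcCapacity,s,t)$, and then to characterize this flow condition via the max-flow min-cut theorem. First I would note that the cut induced by $S=\{s\}$ has capacity $\arcCapacity(\delta^+(\{s\}))=\sum_{\demand\in\Demands}\steerableDemand_\demand=D$, so the minimum cut, and hence the maximum flow, is at most $D$. Therefore a flow of value at least $D$ exists if and only if the minimum cut equals $D$, that is, if and only if \emph{every} $s$-$t$ cut has capacity at least $D$. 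This converts the feasibility question into a purely combinatorial statement ranging over all $s$-$t$ cuts of the network.

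The central step is then to describe the finite-capacity cuts explicitly. Since every arc in $E_3$ carries infinite capacity, any cut $S$ (with $s\in S$ and $t\notin S$) of finite capacity must be closed under consideration sets: if $\demand\in S\cap\Demands$, then $\ConsiderationSet(\demand)\subseteq S$, for otherwise an arc $(\demand,k)\in E_3$ would cross the cut and render its capacity infinite. Writing $U\coloneqq S\cap\Demands$, this forces $\ConsiderationSet(U)\subseteq S$. Because the residual capacities $\helper_k$ are non-negative by Assumption~\ref{ass:1}, placing any facility beyond $\ConsiderationSet(U)$ on the source side only increases the cut capacity. Hence, for each fixed $U\subseteq\Demands$, the minimum-capacity cut with $S\cap\Demands=U$ is $S=\{s\}\cup U\cup\ConsiderationSet(U)$, whose capacity is
\begin{align*}
\arcCapacity(\delta^+(S))=\sum_{\demand\in\Demands\setminus U}\steerableDemand_\demand+\sum_{k\in\ConsiderationSet(U)}\helper_k.
\end{align*}

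Given this minimal-cut form, the requirement that every cut have capacity at least $D$ reduces to demanding, for every $U\subseteq\Demands$, that $\sum_{\demand\in\Demands\setminus U}\steerableDemand_\demand+\sum_{k\in\ConsiderationSet(U)}\helper_k\geq\sum_{\demand\in\Demands}\steerableDemand_\demand$, which after cancelling the shared terms becomes $\sum_{\demand\in U}\steerableDemand_\demand\leq\sum_{k\in\ConsiderationSet(U)}\helper_k$. Finally I would substitute the definitions of $\helper_\location$ and $\helper_\practice$, split $\ConsiderationSet(U)$ into $\ConsiderationSet_\Locations(U)$ and $\ConsiderationSet_\Practices(U)$, and move the collected unsteerable-demand terms $\sum_{k\in\ConsiderationSet(U)}\sum_{\demand\in\ConsiderationSet(k)}\unsteerableDemand_\demand\,\hat{\varUnsteerableDemand}_{\demand k}$ to the left-hand side, which yields exactly inequality~\eqref{BendersOptCuts}.

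I expect the main obstacle to be the cut-characterization step, namely arguing rigorously that it suffices to range over cuts of the special form $S=\{s\}\cup U\cup\ConsiderationSet(U)$. This relies on two ingredients working together: the infinite-capacity argument that rules out cutting any $E_3$-arc, and the non-negativity of the residual capacities from Assumption~\ref{ass:1} that justifies keeping no facility other than $\ConsiderationSet(U)$ on the source side. Once this structural reduction is in place, the remaining work, expanding $\helper_k$ and rearranging, is purely routine algebra.
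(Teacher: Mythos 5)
Your proposal is correct and follows essentially the same route as the paper's proof: reduce feasibility to the max-flow condition via Lemma~\ref{BendersEquiv}, apply max-flow min-cut, observe that finite-capacity cuts must contain $\ConsiderationSet(U)$ for $U = S \cap \Demands$ so that the relevant cuts are exactly those of the form $\{s\}\cup U\cup \ConsiderationSet(U)$, and finish by expanding $\helper_k$ to recover inequality~\eqref{BendersOptCuts}. The only cosmetic difference is that you fold both implications into a single ``minimal cut per $U$'' argument (making the use of Assumption~\ref{ass:1} explicit), whereas the paper reformulates the inequalities first and then checks the two directions separately.
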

\begin{proof}
	First, let us note that \eqref{BendersOptCuts} can be equivalently reformulated as
		\begin{align}
	&  \sum_{\demand \in U} \steerableDemand_\demand + \sum_{k\in N(U)} \sum_{\demand \in N(k)} \unsteerableDemand_\demand \,\hat{\varUnsteerableDemand}_{\demand k} 
	\leq 
	\sum_{\location \in \ConsiderationSet_\Locations(U)} \capacitySession \,\hat{x}_\location   + \sum_{\practice \in \ConsiderationSet_\Practices(U)} \capacityPractice_\practice   &\forall U\subseteq \Demands \notag\\
	\Leftrightarrow & \sum_{\demand \in U} \steerableDemand_\demand 
	\leq \sum_{\location \in \ConsiderationSet_\Locations(U)} \hspace*{-4pt} {\bigg(}  \capacitySession \,\hat{x}_\location \,{-} \hspace*{-3pt} \sum_{\demand \in \ConsiderationSet(\location)} \unsteerableDemand_\demand \,\hat{\varUnsteerableDemand}_{\demand \location}\bigg)   + \hspace*{-2pt} \sum_{\practice \in \ConsiderationSet_\Practices(U)} \hspace*{-4pt} {\bigg(} \capacityPractice_\practice \,{-} \hspace*{-3pt} \sum_{\demand \in \ConsiderationSet(\practice)} \unsteerableDemand_\demand \,\hat{\varUnsteerableDemand}_{\demand \practice} \bigg)  &\forall U\subseteq \Demands \notag\\
	\Leftrightarrow & \sum_{\demand \in U} \steerableDemand_\demand  \leq \sum_{k \in \ConsiderationSet(U)} \helper_k&\forall U\subseteq \Demands. \label{reformulationBendersOptCuts}
	\end{align}
	Moreover, by Lemma~\ref{BendersEquiv} and the max-flow-min-cut theorem, $\BendersSubproblem{\hat{y}}{\hat{x}}{\hat{w}}$ is feasible if and only if every $s$-$t$ cut in the network $(G,\arcCapacity,s,t)$ induced by $S\subsetneq V(G)$ with $s\in S$, $t\notin S$ has capacity $\arcCapacity(\delta^+(S)) \geq D$.
	Hence, it suffices to show that inequalities \eqref{reformulationBendersOptCuts} hold if and only if every $s$-$t$ cut induced by $S\subsetneq V(G)$ has capacity $\arcCapacity(\delta^+(S)) \geq D$.
	
	Assume that the inequalities \eqref{reformulationBendersOptCuts} hold.
	All $s$-$t$ cuts $\delta^+(S)\subseteq E(G)$ containing arcs from $E_3$ have infinite capacity and obviously satisfy $\arcCapacity(\delta^+(S)) \geq D$.
	Hence, let $\delta^+(S)$ with $s\in S$, $t\notin S$ be an $s$-$t$ cut in $G$ of finite capacity and define $U \coloneqq S \cap V$.
	Then $N(U) \subseteq S$ as otherwise $\delta^+(S) \cap E_3 \neq \emptyset$. Consequently we have that
	\begin{align*}
		\arcCapacity(\delta^+(S)) \geq \sum_{k \in \ConsiderationSet(U)} \helper_k + \sum_{\demand \in \Demands \setminus U} \steerableDemand_\demand \geq   \sum_{\demand \in U} \steerableDemand_\demand + \sum_{\demand \in \Demands \setminus U} \steerableDemand_\demand = D.
	\end{align*}
	
	Conversely, assume that  $\arcCapacity(\delta^+(S)) \geq D$ for all $S\subsetneq V(G)$ with $s\in S$, $t\notin S$.
	Let $U \subseteq \Demands$ and define $S \coloneqq \{s\} \cup U \cup N(U)$.
	Then obviously $s \in S$ and $t\notin S$ and we get by our assumption that
	\begin{align*}
		\arcCapacity(\delta^+(S)) =&\sum_{k\in N(U)} \helper_k + \sum_{\demand \in \Demands \setminus U} \steerableDemand_\demand \geq  \sum_{\demand \in \Demands} \steerableDemand_\demand \\
		\Leftrightarrow &\sum_{k\in N(U)} \helper_k \geq \sum_{\demand \in U} \steerableDemand_\demand.	
	\end{align*} 
\end{proof}
As a result of Theorem~\ref{thm:bendersoptcuts}, we can obtain a linear formulation of our Benders master problem $\MasterProblem$  by substituting constraint \eqref{3g} with the Benders feasibility cuts \eqref{BendersOptCuts}.
We refer to the resulting formulation of the $\strategicPlanningProblem$ as $\BendersFormulation$.

\begin{corollary}
	$\BendersFormulation$ is an integer linear formulation for the $\strategicPlanningProblem$.
\end{corollary}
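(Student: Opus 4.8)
The plan is to chain the validity of $\OriginalFormulation$ (Theorem~\ref{thm_first_formulation}) through the two reformulation steps that produce $\BendersFormulation$: first the Benders projection of $\OriginalFormulation$ onto its $(y,x,\varUnsteerableDemand)$-variables yielding $\MasterProblem$, and then the replacement of the abstract feasibility constraint \eqref{3g} by the linear cuts \eqref{BendersOptCuts}. Since the objective $\sum_{\location} \costLocation_\location \, y_\location + \sum_\location \costSession \, x_\location$ involves neither $z$ nor $\varUnsteerableDemand$, it suffices to show that the set of $(y,x,\varUnsteerableDemand)$ feasible for $\BendersFormulation$ coincides with the projection onto $(y,x,\varUnsteerableDemand)$ of the feasible set of $\OriginalFormulation$; equality of optimal values then follows immediately.

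First I would observe that constraints \eqref{2} and \eqref{6}--\eqref{9}, together with the integrality and domain restrictions on $y$, $x$, and $\varUnsteerableDemand$, appear verbatim in both $\OriginalFormulation$ and $\BendersFormulation$. The only structural difference is that $\OriginalFormulation$ additionally carries the second-stage variable $z$ subject to constraints \eqref{3}--\eqref{5}, whereas $\BendersFormulation$ replaces these by the cuts \eqref{BendersOptCuts}. I would therefore argue that, for any fixed $(\hat{y},\hat{x},\hat{\varUnsteerableDemand})$ satisfying the shared constraints, a completion $z$ with $(\hat{y},\hat{x},z,\hat{\varUnsteerableDemand})$ feasible for $\OriginalFormulation$ exists if and only if $(\hat{y},\hat{x},\hat{\varUnsteerableDemand})$ satisfies \eqref{BendersOptCuts}.

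The key step is to identify constraints \eqref{3}--\eqref{5} of $\OriginalFormulation$, after fixing $(\hat{x},\hat{\varUnsteerableDemand})$ and moving the unsteerable-demand terms to the right-hand side, with constraints \eqref{4b}--\eqref{4e} of the Benders subproblem $\BendersSubproblem{\hat{y}}{\hat{x}}{\hat{\varUnsteerableDemand}}$. Hence a completing $z$ exists exactly when $\BendersSubproblem{\hat{y}}{\hat{x}}{\hat{\varUnsteerableDemand}}$ is feasible, and by Theorem~\ref{thm:bendersoptcuts} this feasibility is equivalent to the cuts \eqref{BendersOptCuts}. This closes both directions of the projection argument: for the forward direction one takes the $z$ supplied by a feasible solution of $\OriginalFormulation$; for the reverse direction one invokes Corollary~\ref{cor:LPrelax} to guarantee that, whenever the cuts hold, the subproblem admits an \emph{integral} feasible $z$, which can be adjoined to $(\hat{y},\hat{x},\hat{\varUnsteerableDemand})$ to recover a feasible point of $\OriginalFormulation$.

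I expect the only subtlety---rather than a genuine obstacle---to be bookkeeping: verifying that the residual-capacity substitution matches \eqref{4b}--\eqref{4e} exactly, and recalling that the flow-network machinery behind Theorem~\ref{thm:bendersoptcuts} is valid only under Assumption~\ref{ass:1}, so that the corollary inherits this standing assumption. Given the preceding lemmas this is routine, and the equality of feasible $(y,x,\varUnsteerableDemand)$-projections together with the $z$-independence of the objective yields that $\BendersFormulation$ is an integer linear formulation for the $\strategicPlanningProblem$.
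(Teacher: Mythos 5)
Your proposal is correct and takes essentially the same route as the paper, which presents this corollary as an immediate consequence of Theorem~\ref{thm:bendersoptcuts}: since $\MasterProblem$ together with the subproblem $\BendersSubproblem{\hat{y}}{\hat{x}}{\hat{w}}$ is exactly the projection of $\OriginalFormulation$ onto the $(y,x,\varUnsteerableDemand)$-variables (the objective being independent of $z$), substituting constraint \eqref{3g} by the cuts \eqref{BendersOptCuts} preserves the formulation property established in Theorem~\ref{thm_first_formulation}. Your explicit bookkeeping --- matching \eqref{3}--\eqref{5} with \eqref{4b}--\eqref{4e}, the (strictly speaking redundant) appeal to Corollary~\ref{cor:LPrelax} for an integral completion, and noting that everything stands under Assumption~\ref{ass:1} --- just spells out what the paper leaves implicit.
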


The Benders feasibility cuts \eqref{BendersOptCuts} can be separated in 
polynomial time by computing a minimum $s$-$t$ cut in the network $(G,\arcCapacity,s,t)$ 
as described above.
Alternatively, one can separate the cuts by solving the dual of the Benders subproblem $\LPBendersSubproblem{\hat{y}}{\hat{x}}{\hat{w}}$ that we describe in Appendix~\ref{appendix:separationLP}.
Appendix~\ref{appendix:separationLP} furthermore shows that the separation problem for $\BendersFormulation$ is trivial if we only consider unsteerable patient demands due to Assumption~\ref{ass:1}.

The next section considers the $\strategicPlanningProblem$ with uncertain patient demands. 
As our main interest lies on the setup of operation sites and operation of MMU sessions,
it suffices to guarantee the existence of a feasible assignment of the steerable patient 
demands.
Thus, we restrict ourselves to the Benders formulation $\BendersFormulation$ of the $\strategicPlanningProblem$ in the following and note that such an assignment can be determined by a single maximum flow computation as a result of Lemma~\ref{BendersEquiv}.
\section{Integration of demand uncertainties}
\label{sec:uncertainty}
Up to this point, we considered the $\strategicPlanningProblem$ in a deterministic setting.
That is we assume that all input data is precisely known, in particular, we assume that the weekly steerable and unsteerable patient demand at each demand origin $\demand\in \Demands$ can be described by deterministic nominal values $\steerableDemand_\demand \in \N $ and $\unsteerableDemand_\demand \in  \N$, respectively.
Clearly, this assumption does not hold in reality as a patient's need to see a primary care physician is subject to uncertainty.
As a result, strategic MMU operation plans that are feasible with respect to the nominal patient demands may be infeasible in real-life operation~\cite{BenTal+Nemirovsky:2000}.
To address this issue, we model the weekly patient demands at each demand origin as random variables.
Specifically, we assume that the steerable patient demand at each demand origin $\demand \in \Demands$ can be described by an independent random variable $\scenSteerable_\demand$ that takes values in $\{\lowerSteerable_\demand, \lowerSteerable_\demand+1, \dots, \upperSteerable_\demand\}$, where $\lowerSteerable_\demand, \upperSteerable_\demand \in \N$ with $\lowerSteerable_\demand \leq  \upperSteerable_\demand$.
Analogously, we assume that the unsteerable patient demand at each demand origin $\demand \in \Demands$ can be described by an independent random variable $\scenUnsteerable_\demand$ that takes values in $\{\lowerUnsteerable_\demand, \lowerUnsteerable_\demand+1, \dots, \upperUnsteerable_\demand\}$, where $\lowerUnsteerable_\demand, \upperUnsteerable_\demand \in \N$ with $\lowerUnsteerable_\demand \leq  \upperUnsteerable_\demand$.

To extend the $\strategicPlanningProblem$ to uncertain patient demands, we employ the concept of robust optimization~\cite{ben2009robust,GABREL2014471}.
The core principle of robust optimization is to strive for solutions that are, to some extent, immune to variations in the input data.
This is achieved by hedging solutions against a subset of all possible realizations of the uncertain parameters which are represented by so-called \textit{uncertainty sets}.

Under our model of data uncertainty, the set of all possible realizations of the steerable and unsteerable patient demands are given by $\AllScenSteerable\coloneqq \{\scenSteerable \in \N^{\Demands} \colon
\lowerSteerable_\demand \leq \scenSteerable_\demand \leq \upperSteerable_\demand \;\forall \demand \in \Demands\}$ and $\AllScenUnsteerable\coloneqq \{\scenUnsteerable \in \N^{\Demands} \colon 
\lowerUnsteerable_\demand \leq \scenUnsteerable_\demand \leq \upperUnsteerable_\demand \;\forall \demand \in \Demands\}$, respectively.
The \textit{robust strategic planning problem for MMUs} then asks for a strategic MMU operation plan of minimum cost that is feasible for every pair of patient demand realizations $(\scenSteerable, \scenUnsteerable) \in \Uncertainty_\indexSteerable \times \Uncertainty_\indexUnsteerable$, where $\Uncertainty_\indexSteerable \subseteq \AllScenSteerable$ is an uncertainty set of the steerable patient demands and $\Uncertainty_\indexUnsteerable \subseteq \AllScenUnsteerable$ is an uncertainty set of the unsteerable patient demands.
 To formalize this, we extend the notion of a feasible strategic MMU operation plan to the robust setting with uncertain patient demands.

 \begin{definition}
	A strategic MMU operation plan $\MMUoperation$ is \textit{robust feasible}
	if $\MMUoperation$ is feasible for the deterministic $\strategicPlanningProblem$ with nominal patient demands $\steerableDemand= \scenSteerable$ and $ \unsteerableDemand= \scenUnsteerable$ for every pair of patient demand realizations $(\scenSteerable, \scenUnsteerable) \in \Uncertainty_\indexSteerable \times \Uncertainty_\indexUnsteerable$.
\end{definition}

Note, that the number of sessions operated at each site is fixed in a robust feasible MMU operation plan which induces a fixed assignment of the unsteerable demands that is independent of the realization $\scenUnsteerable \in \Uncertainty_\indexUnsteerable$.
However, the assignment of the steerable patient demands is variable and can be adapted for any realization $\scenSteerable \in \Uncertainty_\indexSteerable$.
We can now use the notion of a robust feasible strategic MMU operation plan to provide a formal definition of the robust strategic planning problem for MMUs.
\begin{definition}[$\robustStrategicPlanningProblem$]
	Let the potential MMU operation sites $\location\in \Locations$ with setup costs $\costLocation_\location\in \N$ and weekly session capacities $\capacityLocation_\location \in \N$ be given.
	Moreover, let $\practice\in \Practices$ be the existing practices with weekly treatment capacities $\capacityPractice_\practice\in \N$ and 
	$\demand \in \Demands$ be the patient demand origins with consideration sets $\ConsiderationSet(\demand)\subseteq \Locations \cup \Practices$.
	The uncertain weekly steerable and unsteerable demands are described by the uncertainty sets $\Uncertainty_\indexSteerable \subseteq \AllScenSteerable$ and $\Uncertainty_\indexUnsteerable \subseteq \AllScenUnsteerable$, respectively.
	Then, the \textit{robust strategic planning problem for MMUs} ($\robustStrategicPlanningProblem$) asks for a robust feasible strategic MMU operation plan of minimum cost, where every operated MMU session induces the cost $\costSession\in \N$ and yields a weekly treatment capacity $\capacitySession\in \N$.
\end{definition}

Obviously, the $\robustStrategicPlanningProblem$ is a generalization of the $\strategicPlanningProblem$ and thus the problem's strong NP-hardness follows immediately from Theorem~\ref{NP_SPMMU}.

\begin{theorem}
	\label{thm: robust NP hard}
	The $\robustStrategicPlanningProblem$ is strongly NP-hard.
\end{theorem}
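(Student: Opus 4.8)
The plan is to exhibit the deterministic $\strategicPlanningProblem$ as a special case of the $\robustStrategicPlanningProblem$ obtained by collapsing both uncertainty sets to singletons, so that strong NP-hardness transfers directly from Theorem~\ref{NP_SPMMU}. This is the formalization of the ``generalization'' remark preceding the statement: the robust problem contains the deterministic one once the demand intervals are allowed to degenerate.

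First I would take an arbitrary instance of the deterministic $\strategicPlanningProblem$ with nominal steerable demands $\steerableDemand_\demand$ and unsteerable demands $\unsteerableDemand_\demand$ and build an instance of the $\robustStrategicPlanningProblem$ on the same sites $\Locations$, practices $\Practices$, demand origins $\Demands$, consideration sets $\ConsiderationSet(\demand)$, costs, and capacities. The only additional data are the demand intervals, which I would set so that each interval is a single point, namely $\lowerSteerable_\demand = \upperSteerable_\demand = \steerableDemand_\demand$ and $\lowerUnsteerable_\demand = \upperUnsteerable_\demand = \unsteerableDemand_\demand$ for every $\demand \in \Demands$, and then choose the uncertainty sets $\Uncertainty_\indexSteerable = \AllScenSteerable$ and $\Uncertainty_\indexUnsteerable = \AllScenUnsteerable$. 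Because every interval degenerates to one integer, $\AllScenSteerable$ and $\AllScenUnsteerable$ are singletons containing exactly the nominal vectors $\scenSteerable$ with $\scenSteerable_\demand = \steerableDemand_\demand$ and $\scenUnsteerable$ with $\scenUnsteerable_\demand = \unsteerableDemand_\demand$.

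Next I would check that this construction preserves feasibility and cost. Against singleton uncertainty sets, a strategic MMU operation plan $\MMUoperation$ is robust feasible if and only if it is feasible for the unique realization $(\scenSteerable,\scenUnsteerable)$, which by the definition of robust feasibility is precisely feasibility for the deterministic instance with nominal demands $\steerableDemand = \scenSteerable$ and $\unsteerableDemand = \scenUnsteerable$. Since the cost $c(\MMUoperation)$ depends only on the operation plan and not on the realized demands, the optimal values of the two instances agree and their optimal plans correspond. The construction is clearly computable in polynomial time and, as it merely copies the instance data and sets the interval endpoints equal, it introduces no numbers larger than those of the original deterministic instance, so it is a reduction of the kind that preserves strong NP-hardness. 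The only point requiring a line of care is verifying that robust feasibility against a singleton reduces literally to deterministic feasibility; I do not expect any genuine obstacle, as the result is an immediate corollary of the generalization relationship established by Theorem~\ref{NP_SPMMU}.
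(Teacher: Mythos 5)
Your proposal is correct and is precisely the argument the paper has in mind: the paper disposes of this theorem in one line by noting that the $\robustStrategicPlanningProblem$ generalizes the $\strategicPlanningProblem$, and your degenerate-interval construction ($\lowerSteerable_\demand = \upperSteerable_\demand = \steerableDemand_\demand$, $\lowerUnsteerable_\demand = \upperUnsteerable_\demand = \unsteerableDemand_\demand$, so both uncertainty sets collapse to singletons) is exactly the formalization of that generalization, together with the routine checks (cost preservation, polynomial computability, no blow-up of numerical values) needed for strong NP-hardness to transfer from Theorem~\ref{NP_SPMMU}.
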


To obtain a formulation for the $\robustStrategicPlanningProblem$, we consider the robust counterpart of the formulation $\BendersFormulation$ for the deterministic $\strategicPlanningProblem$ defined as

\begin{mini!}|s|[2]
	{y,x,\varUnsteerableDemand}
	{\sum_{\location\in \Locations} \costLocation_\location \, 
		y_\location + \sum_{\location \in \Locations} \costSession \, 
		x_\location \label{}}
	{}{\RobustBendersFormulation\;\;\,}
	\addConstraint{\max_{\scenSteerable \in \Uncertainty_\indexSteerable} \sum_{\demand \in U} \scenSteerable_\demand {+} \max_{\scenUnsteerable \in \Uncertainty_\indexUnsteerable} \sum_{k\in N(U)} \sum_{\demand \in N(k)} \scenUnsteerable_\demand \,\varUnsteerableDemand_{\demand k} \notag}{}{}
	\addConstraint{\hspace*{1.3cm} \leq
	\sum_{\location \in \ConsiderationSet_\Locations(U)} \capacitySession \,x_\location   + \sum_{\practice \in \ConsiderationSet_\Practices(U)} \capacityPractice_\practice}{}{\quad \forall U\subseteq \Demands \label{robBernderOptCut}}
	\togglefalse{bodyCon}
	\addConstraint{x_\location}{\leq \capacityLocation_\location \, y_\location }{\quad \forall \location\in \Locations \label{10c}}
	\addConstraint{\sum_{k\in \ConsiderationSet(\demand)} \varUnsteerableDemand_{\demand k}}{\geq 
		1}{\quad \forall \demand\in \Demands \label{10d}}
	\addConstraint{\varUnsteerableDemand_{\demand \location}}{\leq y_\location}{\quad \forall \demand 
		\in \Demands,\, \forall \location \in \ConsiderationSet_\Locations(\demand) 
		\label{}}
	\addConstraint{\varUnsteerableDemand_{\demand \location}}{\geq y_\location - 
		\sum_{i=1}^{\invorder{\demand}{\location} -1 } \varUnsteerableDemand_{\demand, 
			\order{\demand}{i}}}{\quad \forall \demand \in \Demands,\, \forall \location \in  
		\ConsiderationSet_\Locations(\demand) \label{}}
	\addConstraint{\varUnsteerableDemand_{\demand \practice}}{\geq 1 - 		\sum_{i=1}^{\invorder{\demand}{\practice} -1} \varUnsteerableDemand_{\demand, 
			\order{\demand}{i}}}{\quad \forall \demand \in \Demands,\, \forall \practice \in 
		\ConsiderationSet_\Practices(\demand) \label{10g}}
	\addConstraint{x_\location\in \N,\; y_\location\in \{0,1\} }{}{\quad \forall \location \in 
		\Locations}
	\addConstraint{\varUnsteerableDemand_{\demand k}}{\in \{0,1\}}{\quad \forall \demand \in \Demands,\, \forall k\in 
		\ConsiderationSet(\demand). \label{10i}}
\end{mini!}

In this formulation, inequalities \eqref{robBernderOptCut} correspond to the robust Benders feasibility cuts, constraints \eqref{10c} enforce the session capacity at each setup site, and inequalities \eqref{10d}--\eqref{10g} ensure that unsteerable patient demands target their closest considered operating treatment facility.
We show that $\RobustBendersFormulation$ is a formulation for the $\robustStrategicPlanningProblem$.
\begin{theorem}
	$\RobustBendersFormulation$ is an integer formulation for the $\robustStrategicPlanningProblem$.
\end{theorem}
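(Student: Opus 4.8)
The plan is to characterize, exactly as in the deterministic case, when a vector $(y,x,\varUnsteerableDemand)$ describes a robust feasible plan. The non-cut constraints \eqref{10c}--\eqref{10i} of $\RobustBendersFormulation$ coincide verbatim with those of the master problem of $\BendersFormulation$, so by correctness of the deterministic formulation they guarantee that $(y,x)$ encodes a strategic MMU operation plan $\MMUoperation$ and that $\varUnsteerableDemand$ represents the induced closest-facility assignment of the unsteerable demands. Crucially, this assignment is fixed and independent of the demand realization, so it remains only to show that, for such $(y,x,\varUnsteerableDemand)$, the robust feasibility cuts \eqref{robBernderOptCut} hold if and only if $\MMUoperation$ is robust feasible.

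First I would unfold the definition of robust feasibility: $\MMUoperation$ is robust feasible exactly when it is feasible for the deterministic $\strategicPlanningProblem$ with nominal demands $\steerableDemand=\scenSteerable$ and $\unsteerableDemand=\scenUnsteerable$ for every pair $(\scenSteerable,\scenUnsteerable)\in\Uncertainty_\indexSteerable\times\Uncertainty_\indexUnsteerable$. Fixing one such realization while keeping $(x,\varUnsteerableDemand)$ fixed, Theorem~\ref{thm:bendersoptcuts} applies verbatim with $\steerableDemand_\demand$ replaced by $\scenSteerable_\demand$ and $\unsteerableDemand_\demand$ by $\scenUnsteerable_\demand$: feasibility for that realization is equivalent to
\[
\sum_{\demand\in U}\scenSteerable_\demand+\sum_{k\in\ConsiderationSet(U)}\sum_{\demand\in\ConsiderationSet(k)}\scenUnsteerable_\demand\,\varUnsteerableDemand_{\demand k}\le\sum_{\location\in\ConsiderationSet_\Locations(U)}\capacitySession\,x_\location+\sum_{\practice\in\ConsiderationSet_\Practices(U)}\capacityPractice_\practice
\]
for all $U\subseteq\Demands$. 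Quantifying over all realizations, $\MMUoperation$ is robust feasible if and only if this inequality holds for every $U\subseteq\Demands$ and every $(\scenSteerable,\scenUnsteerable)\in\Uncertainty_\indexSteerable\times\Uncertainty_\indexUnsteerable$.

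The crux is then to pass from ``for all realizations'' to the worst-case cuts \eqref{robBernderOptCut}. Interchanging the two universal quantifiers, for each fixed $U$ I would maximize the left-hand side over $(\scenSteerable,\scenUnsteerable)$. Since the left-hand side splits into a term $\sum_{\demand\in U}\scenSteerable_\demand$ depending only on $\scenSteerable$ and a term $\sum_{k\in\ConsiderationSet(U)}\sum_{\demand\in\ConsiderationSet(k)}\scenUnsteerable_\demand\,\varUnsteerableDemand_{\demand k}$ depending only on $\scenUnsteerable$ (with $\varUnsteerableDemand$ held fixed), and since $\Uncertainty_\indexSteerable\times\Uncertainty_\indexUnsteerable$ is a Cartesian product, the maximum of the sum equals the sum of the two independent maxima. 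This reproduces precisely the left-hand side of \eqref{robBernderOptCut}, so the whole family of per-realization inequalities is equivalent to the robust cuts. I expect this separability step to be the only real subtlety: it relies essentially on the product structure of the uncertainty set together with the fact that $\varUnsteerableDemand$ is a first-stage decision that does not vary with the scenario, so the two inner maximizations genuinely decouple.

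Finally, I would record integrality. The variables $y,x,\varUnsteerableDemand$ are constrained to be integral directly, and once \eqref{robBernderOptCut} certifies feasibility for a given admissible realization, the existence of an integral steerable assignment $z$ follows from Lemma~\ref{lem:TU} and Corollary~\ref{cor:LPrelax}. Combining the equivalence established above with this integrality observation shows that $\RobustBendersFormulation$ is an integer formulation for the $\robustStrategicPlanningProblem$.
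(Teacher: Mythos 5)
Your proof is correct and takes essentially the same route as the paper: both apply the deterministic cut characterization of Theorem~\ref{thm:bendersoptcuts} realization-by-realization and relate the per-realization cuts to the worst-case cuts \eqref{robBernderOptCut}, where the paper's argument is exactly your "easy" direction that the two maxima dominate every fixed pair $(\hat{\scenSteerable},\hat{\scenUnsteerable})\in\Uncertainty_\indexSteerable\times\Uncertainty_\indexUnsteerable$. If anything, your write-up is more complete than the paper's: you also prove the converse direction via the decoupling of the two maximizations over the product uncertainty set (so that no robust feasible plan is cut off by \eqref{robBernderOptCut}), a step the paper leaves implicit.
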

\begin{proof}
Given an optimal solution $(y,x,w)$ to $\RobustBendersFormulation$, a strategic MMU operation plan $\MMUoperation$ of minimum cost can be defined via $\MMUoperation_\location \coloneqq x_\location$ for all $\location \in \Locations$.
As $(y,x,w)$ satisfies constraints \eqref{robBernderOptCut}, it follows that for every pair of patient demand realizations $(\hat{\scenSteerable}, \hat{\scenUnsteerable}) \in \Uncertainty_\indexSteerable \times \Uncertainty_\indexUnsteerable$ and every $U\subseteq \Demands$ we have that
\begin{align*}
	\sum_{\demand \in U} \hat{\scenSteerable}_\demand + \sum_{k\in N(U)} \sum_{\demand \in N(k)} \hat{\scenUnsteerable}_\demand \,\varUnsteerableDemand_{\demand k} &\leq \max_{\scenSteerable \in \Uncertainty_\indexSteerable} \sum_{\demand \in U} \scenSteerable_\demand + \max_{\scenUnsteerable \in \Uncertainty_\indexUnsteerable} \sum_{k\in N(U)} \sum_{\demand \in N(k)} \scenUnsteerable_\demand \,\varUnsteerableDemand_{\demand k}\\
	&\leq
	\sum_{\location \in \ConsiderationSet_\Locations(U)} \capacitySession \,x_\location   + \sum_{\practice \in \ConsiderationSet_\Practices(U)} \capacityPractice_\practice.
\end{align*}
Thus, the robust feasibility of $\MMUoperation$ follows directly from Theorem~\ref{thm:bendersoptcuts}.
\end{proof}

Formulation $\RobustBendersFormulation$ is in general non-linear due to constraints \eqref{robBernderOptCut}.
However, for certain choices of the uncertainty sets $\Uncertainty_\indexSteerable \subseteq \AllScenSteerable$ and $\Uncertainty_\indexUnsteerable \subseteq \AllScenUnsteerable$ we can show that \eqref{robBernderOptCut} can be reformulated in a linear way.
There are various concepts of defining uncertainty sets; see, e.g.,~\cite{Bertsimas2003, BertsimasPriceOfRobustness2004, kouvelis1996robust, kasperski2008discrete, Kasperski+Zielinski:2016}.
The first of these setting we consider, is the complete protection against uncertainties in the patient demands, i.e., the $\robustStrategicPlanningProblem$ with uncertainty sets $\Uncertainty_\indexSteerable = \AllScenSteerable$ and $\Uncertainty_\indexUnsteerable = \AllScenUnsteerable$. This setting is known as \emph{interval uncertainty}~\cite{ben2009robust,doi:10.1287/opre.21.5.1154} and allows us to reformulate \eqref{robBernderOptCut} as
\begin{align}
	\sum_{\demand \in U} \upperSteerable_\demand + \sum_{k\in N(U)} \sum_{\demand \in N(k)} \upperUnsteerable_\demand \,\varUnsteerableDemand_{\demand k} \leq 
	\sum_{\location \in \ConsiderationSet_\Locations(U)} \capacitySession \,x_\location   + \sum_{\practice \in \ConsiderationSet_\Practices(U)} \capacityPractice_\practice \quad  &\forall U\subseteq \Demands. \tag{$\ref{robBernderOptCut}^\prime$}
\end{align}
That is, we can reduce the $\robustStrategicPlanningProblem$ for this particular choice of uncertainty sets to the deterministic $\strategicPlanningProblem$ with worst-case nominal patient demands $\steerableDemand_\demand = \upperSteerable_\demand$ and $\unsteerableDemand_\demand = \upperUnsteerable_\demand$ for all $\demand \in \Demands$.
We refer to the resulting formulation of the $\robustStrategicPlanningProblem$ with interval uncertainty sets as $\IntervalRobustBendersFormulation$.
This approach is known as the method of Soyster~\cite{doi:10.1287/opre.21.5.1154} and generally entails prohibitive operation cost as a result of the method's conservatism.

To alleviate this drawback, Bertsimas and Sim~\cite{BertsimasPriceOfRobustness2004} introduced \textit{budgeted uncertainty sets} that restrict the deviations in the uncertain input data through a budget parameter.
The choice of this budget parameter allows for a trade-off between robustness and operation cost of the obtained solutions. 
In the following, we consider a slight adaptation of budgeted uncertainty sets that contains all patient demand realizations in which the total (un-)steerable patient demand is bounded by the parameter $\demandBound_\indexSteerable\in \N$ $(\demandBound_\indexUnsteerable\in \N)$.
For the steerable patient demands, these realizations can be represented by the uncertainty set
\begin{align*}
\BudgetedUncertainty_\indexSteerable\coloneqq \left\{\scenSteerable \in \N^{\Demands} \colon 
\lowerSteerable_\demand \leq \scenSteerable_\demand \leq \upperSteerable_\demand \;\forall \demand \in \Demands, 
\;\sum_{\demand \in \Demands} \scenSteerable_\demand \leq \demandBound_\indexSteerable \right\}.
\end{align*}
For the unsteerable patient demands, we analogously obtain the uncertainty set
\begin{align*}
\BudgetedUncertainty_\indexUnsteerable\coloneqq \left\{\scenUnsteerable \in \N^{\Demands} \colon \lowerUnsteerable_\demand \leq \scenUnsteerable_\demand \leq \upperUnsteerable_\demand\; \forall \demand \in \Demands,\; \sum_{\demand \in \Demands} \scenUnsteerable_\demand \leq \demandBound_\indexUnsteerable \right\}.
\end{align*}
To ensure that the uncertainty sets~$\BudgetedUncertainty_\indexSteerable$ and $\BudgetedUncertainty_\indexUnsteerable$ are non-empty, we require that $\sum_{\demand \in \Demands} \lowerSteerable_\demand \leq \demandBound_\indexSteerable$ and $\sum_{\demand \in \Demands} \lowerUnsteerable_\demand \leq \demandBound_\indexUnsteerable$.
Moreover, we can assume w.l.o.g.~that $\demandBound_\indexSteerable \leq \sum_{\demand \in \Demands} \upperSteerable_\demand$ and $\demandBound_\indexUnsteerable \leq \sum_{\demand \in \Demands} \upperUnsteerable_\demand$ as we otherwise always have $\BudgetedUncertainty_\indexSteerable=\AllScenSteerable$ and $\BudgetedUncertainty_\indexUnsteerable= \AllScenUnsteerable$.

For the remainder of this section, we consider the $\robustStrategicPlanningProblem$ with the budgeted uncertainty sets $\BudgetedUncertainty_\indexSteerable$ and $\BudgetedUncertainty_\indexUnsteerable$ and devise an integer linear formulation, which is subsequently solved by constraint generation.
To that end, we show that \eqref{robBernderOptCut} can be linearized for this particular choice of uncertainty sets.

Considering the non-linear part in \eqref{robBernderOptCut} corresponding to the steerable patient demands,
the linear reformulation is straight forward as 
\begin{align}
	\max_{\scenSteerable \in \BudgetedUncertainty_\indexSteerable} \sum_{\demand \in U} \scenSteerable_\demand = 
	\min \left\lbrace \sum_{\demand \in U} \upperSteerable_\demand,\; 
	\demandBound_\indexSteerable- \sum_{\demand \in \Demands \setminus U} \lowerSteerable_\demand \right\rbrace 
\label{linearReformulationSteerableDemand}
\end{align}
which is simply a constant for fixed $U \subseteq \Demands$.

For the non-linear part in \eqref{robBernderOptCut} corresponding to the unsteerable patient demands, we can obtain a linear reformulation through LP duality.
By the definition of $\BudgetedUncertainty_\indexUnsteerable$, we can formulate the inner maximization problem
\begin{align*}
\max_{\scenUnsteerable \in \BudgetedUncertainty_\indexUnsteerable} \sum_{k\in N(U)} \sum_{\demand \in N(k)} \scenUnsteerable_\demand \,\hat{\varUnsteerableDemand}_{\demand k}
\end{align*}
for fixed $U\subseteq \Demands$ and fixed assignment of the unsteerable demands $\hat{\varUnsteerableDemand}_{\demand k} \in \{0,1\}$ for all $\demand\in \Demands$ and $k \in \ConsiderationSet(\demand)$  via the following integer linear program:
\begin{maxi!}|s|[2]
	{\scenUnsteerable}
	{\sum_{k\in N(U)} \sum_{\demand \in N(k)} \scenUnsteerable_\demand \,\hat{\varUnsteerableDemand}_{\demand k}}
	{}
	{\PrimalUnsteerable{\hat{\varUnsteerableDemand}}\quad}
	\addConstraint{\scenUnsteerable_\demand}{\leq \upperUnsteerable_\demand}{\quad \forall \demand \in \Demands \label{12b}}
	\addConstraint{-\scenUnsteerable_\demand}{\leq -\lowerUnsteerable_\demand}{\quad \forall \demand \in \Demands \label{12c}}
	\addConstraint{\sum_{\demand \in \Demands} \scenUnsteerable_\demand}{\leq \demandBound_\indexUnsteerable}{ \label{12d}}
	\addConstraint{\scenUnsteerable_\demand}{\in \N}{\quad\forall \demand \in \Demands. \label{12e}}
\end{maxi!}

The problem $\PrimalUnsteerable{\hat{\varUnsteerableDemand}}$ is feasible and bounded as we assumed $ \demandBound_\indexUnsteerable \geq \sum_{\demand \in \Demands} \lowerUnsteerable_\demand $.
Moreover, we can show that the constraint matrix of $\PrimalUnsteerable{\hat{\varUnsteerableDemand}}$ is totally unimodular.

\begin{lemma}
	\label{lem:TU2}
	The constraint matrix of $\PrimalUnsteerable{\hat{\varUnsteerableDemand}}$ is totally unimodular.
\end{lemma}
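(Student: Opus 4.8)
The plan is to write out the constraint matrix explicitly and then verify total unimodularity through a standard sufficient condition. Collecting the inequality constraints \eqref{12b}--\eqref{12d} (the constraint \eqref{12e} only fixes the variable domain and is not part of the constraint matrix), the coefficient matrix $A$ has its columns indexed by the demand origins $\demand \in \Demands$ and is built from three blocks: the $\card{\Demands}$ rows of \eqref{12b} form an identity block (a single $+1$ per row), the $\card{\Demands}$ rows of \eqref{12c} form a block with a single $-1$ per row, and the budget constraint \eqref{12d} contributes a single all-ones row $\mathbf{1}^\top$. In particular every entry of $A$ lies in $\{0,\pm 1\}$, so $A$ is a candidate for total unimodularity, and each column has exactly three nonzeros: a $+1$ and a $-1$ from the $\pm I$ blocks and a $+1$ from the budget row.

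To establish total unimodularity I would invoke the characterization of Ghouila-Houri: $A$ is totally unimodular if and only if every subset $J$ of its columns admits a partition $J = J_1 \cup J_2$ with $\bigl| \sum_{j \in J_1} a_{ij} - \sum_{j \in J_2} a_{ij} \bigr| \leq 1$ for every row $i$. Given any column subset $J \subseteq \Demands$, the recipe is to split $J$ into two parts $J_1, J_2$ whose sizes differ by at most one. It then remains to check the three row types. Each row coming from \eqref{12b} or \eqref{12c} has a single nonzero entry $\pm 1$ in one column $\demand'$, so its signed column sum is $0$ when $\demand' \notin J$ and $\pm 1$ otherwise, hence always of absolute value at most one. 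The only remaining row is the all-ones budget row \eqref{12d}, whose signed column sum equals $\card{J_1} - \card{J_2}$, which has absolute value at most one by the balanced choice of the partition. This verifies the Ghouila-Houri condition for every $J$ and yields the claim.

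I do not expect a genuine obstacle here: the matrix is sparse enough that a single balanced partition works uniformly, and the only point requiring care is bookkeeping -- excluding the integrality constraint from the constraint matrix and disposing of the single-entry rows before the dense budget row. As an alternative route one could argue inductively: the $1 \times \card{\Demands}$ all-ones matrix is trivially totally unimodular, and appending a row equal to a signed unit vector preserves total unimodularity (expand any square submatrix along that row), so adding the $2\card{\Demands}$ rows of the $\pm I$ blocks one at a time gives the result. Either way, once total unimodularity is in hand, the integral right-hand side of $\PrimalUnsteerable{\hat{\varUnsteerableDemand}}$ lets us pass to its LP relaxation and then dualize, which is precisely what is needed to linearize the unsteerable part of \eqref{robBernderOptCut}.
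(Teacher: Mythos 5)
Your proof is correct, but your primary argument takes a genuinely different route from the paper's. The paper settles the lemma in two lines: it invokes the standard fact that rows which are (signed) unit vectors --- here the identity rows of \eqref{12b} and the negated identity rows of \eqref{12c} --- are irrelevant to total unimodularity, and then observes that the single remaining all-ones row of \eqref{12d} is trivially totally unimodular, since every square submatrix is the $1\times 1$ matrix $(1)$. Your main argument instead verifies the Ghouila-Houri characterization directly: you split an arbitrary column subset $J$ into two parts of nearly equal size, note that each row of \eqref{12b} and \eqref{12c} has a single nonzero $\pm 1$ and hence a signed column sum of absolute value at most one under any partition, and that the budget row contributes $\vert \card{J_1}-\card{J_2}\vert \leq 1$ by the balanced choice. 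Both arguments are sound; yours is self-contained and requires no external citation, at the price of invoking a heavier characterization theorem, whereas the paper's is shorter but leans on a textbook reduction. Note that the ``alternative inductive route'' you sketch at the end --- starting from the trivially totally unimodular all-ones row and appending signed unit rows, expanding each square submatrix along the appended row --- is in substance exactly the paper's proof, so you recovered it as a by-product.
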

\begin{proof}
	The unit rows of the constraint matrix corresponding to constraints \eqref{12b} and \eqref{12c} are irrelevant to the total unimodularity and do not have to be considered~\cite{doi:10.1002/9781118627372.ch14}. Thus, we end up with a vector of ones corresponding to constraint~\eqref{12d} which is obviously totally unimodular as each square submatrix has determinant one. 
\end{proof}
By our choice of parameters, the right hand sides of the constraints in $\PrimalUnsteerable{\hat{\varUnsteerableDemand}}$ are integral.
Thus, the polyhedron of the LP-relaxation $\LPPrimalUnsteerable{\hat{\varUnsteerableDemand}}$ is integral and we can relax the integrality constraint \eqref{12e} as the optimal solution values of $\LPPrimalUnsteerable{\hat{\varUnsteerableDemand}}$ and $\PrimalUnsteerable{\hat{\varUnsteerableDemand}}$ coincide.
The dual problem of $\LPPrimalUnsteerable{\hat{\varUnsteerableDemand}}$ is given by

\begin{mini*}|s|[2]
	{\dualBudgetCap, \kappa, \rho}
	{\sum_{\demand \in \Demands} \left(\upperUnsteerable_\demand \dualBudgetCap_\demand - \lowerUnsteerable_\demand \kappa_\demand \right) + \demandBound_\indexUnsteerable \rho}
	{}
	{\DualUnsteerable{\hat{\varUnsteerableDemand}}\quad}
	\addConstraint{\dualBudgetCap_\demand - \kappa_\demand + \rho}{\geq \sum_{k \in \ConsiderationSet(U)\cap \ConsiderationSet(\demand)} \hat{\varUnsteerableDemand}_{\demand k}}{\quad \forall \demand \in \Demands}
	\addConstraint{\dualBudgetCap_\demand, \kappa_\demand, \rho}{\geq 0}{ \quad \forall \demand \in \Demands.}
\end{mini*}

Strong duality states that the optimal solution values of $\LPPrimalUnsteerable{\hat{\varUnsteerableDemand}}$ and  $\DualUnsteerable{\hat{\varUnsteerableDemand}}$ coincide.
Hence, every feasible solution of $\DualUnsteerable{\hat{\varUnsteerableDemand}}$ yields an upper bound on the optimal solution value of $\PrimalUnsteerable{\hat{\varUnsteerableDemand}}$.
Combined with the observations in~\cite{BertsimasPriceOfRobustness2004}, we can now reformulate \eqref{robBernderOptCut} for the budgeted uncertainty sets $\BudgetedUncertainty_\indexSteerable$ and $\BudgetedUncertainty_\indexUnsteerable$ via the following set of constraints:
\begin{flalign*}
	\;\,\min \left\lbrace \sum_{\demand \in U} \upperSteerable_\demand,\; 
	\demandBound_\indexSteerable- \sum_{\demand \in \Demands \setminus U} \lowerSteerable_\demand \right\rbrace + 
	\sum_{\demand \in \Demands} \left(\upperUnsteerable_\demand \dualBudgetCap^U_\demand - \lowerUnsteerable_\demand \kappa^U_\demand \right) + \demandBound_\indexUnsteerable \rho^U &&\notag
\end{flalign*}
\vspace*{-.4cm}
\begin{align}
&\hspace*{1.7cm}\leq \sum_{\location\in \ConsiderationSet_\Locations(U)} \capacitySession x_\location  + \sum_{\practice\in\ConsiderationSet_\Practices(U)} \capacityPractice_\practice \hspace*{3.121cm} && \forall U \subseteq \Demands \label{reform1} \\
&\dualBudgetCap^U_\demand - \kappa^U_\demand + \rho^U  \geq \sum_{k \in \ConsiderationSet(U) \cap \ConsiderationSet(\demand)} \varUnsteerableDemand_{\demand k}  &&\forall \demand \in \Demands,\, \forall U \subseteq \Demands \label{reform2}\\
&\dualBudgetCap^U_\demand, \kappa^U_\demand, \rho^U \geq 0  &&  \forall \demand \in \Demands,\, \forall U \subseteq \Demands. \label{reform3}
\end{align}
We refer to the resulting formulation of the $\robustStrategicPlanningProblem$ with budgeted uncertainty sets as $\GammaRobustBendersFormulation$. 
Formulation $\GammaRobustBendersFormulation$ is an integer linear program with an exponential number of constraints.
To solve it, we apply constraint generation, i.e., we consider $\GammaRobustBendersFormulation$ with a subset of the constraints of type \eqref{reform1}--\eqref{reform3}.
In particular, we decide on some $\SubsetOfSubsets \subseteq 2^\Demands$ and consider the constraints of type \eqref{reform1}--\eqref{reform3} only for the subsets of patient demand origins $U\in \SubsetOfSubsets$.
This yields a relaxation of $\GammaRobustBendersFormulation$ called the \textit{restricted master problem}.

Once an optimal solution $(\hat{y}, \hat{x}, \hat{\varUnsteerableDemand}, \hat{\dualBudgetCap}, \hat{\kappa}, \hat{\rho})$ to the restricted master problem induced by $\SubsetOfSubsets$ is known, we need to decide whether $(\hat{y}, \hat{x}, \hat{\varUnsteerableDemand}, \hat{\dualBudgetCap}, \hat{\kappa}, \hat{\rho})$ is feasible for the original formulation $\GammaRobustBendersFormulation$.
To that end, we examine whether there exists a subset $U\subseteq \Demands$ for which the system \eqref{reform1}--\eqref{reform3} is infeasible.
This problem is known as the \textit{separation problem} and can be formalized as follows: Is there a subset $U \subseteq \Demands$ such that the system
\begin{align*}
&\min \left\lbrace \sum_{\demand \in U} \upperSteerable_\demand,\; 
\demandBound_\indexSteerable- \sum_{\demand \in \Demands \setminus U} \lowerSteerable_\demand \right\rbrace + 
\sum_{\demand \in \Demands} \left(\upperUnsteerable_\demand \dualBudgetCap^U_\demand - \lowerUnsteerable_\demand \kappa^U_\demand \right) + \demandBound_\indexUnsteerable \rho^U \notag\\
&\hspace*{1.7cm}\leq \sum_{\location\in \ConsiderationSet_\Locations(U)} \capacitySession \hat{x}_\location  + \sum_{\practice\in\ConsiderationSet_\Practices(U)} \capacityPractice_\practice \\
&\dualBudgetCap^U_\demand - \kappa^U_\demand + \rho^U \geq \sum_{k \in \ConsiderationSet(U) \cap \ConsiderationSet(\demand)} \hat{\varUnsteerableDemand}_{\demand k}  &&\forall \demand \in \Demands \\
&\dualBudgetCap^U_\demand, \kappa^U_\demand, \rho^U \geq 0  &&  \forall \demand \in \Demands
\end{align*}
has no solution $(\dualBudgetCap^U, \kappa^U,\rho^U)$, i.e., is infeasible? 

By duality and Farkas' lemma~\cite{doi:10.1002/9781118627372.ch2}, we can equivalently reformulate the separation problem in terms of the original constraints \eqref{robBernderOptCut}: Is there a subset $U \subseteq \Demands$ such that

\begin{align}
	\max_{\scenSteerable \in \BudgetedUncertainty_\indexSteerable} \sum_{\demand \in U} \scenSteerable_\demand + \max_{\scenUnsteerable \in \BudgetedUncertainty_\indexUnsteerable} \sum_{k\in N(U)} \sum_{\demand \in N(k)} \scenUnsteerable_\demand \,\hat{\varUnsteerableDemand}_{\demand k} > \sum_{\location \in \ConsiderationSet_\Locations(U)} \capacitySession \,\hat{x}_\location   + \sum_{\practice \in \ConsiderationSet_\Practices(U)} \capacityPractice_\practice \;? \label{SepOriginal}
\end{align}

In the following, we simplify formulation~\eqref{SepOriginal} of the separation problem even further.
To that end, let us recall that for fixed set $U \subseteq \Demands$ we have concluded in  \eqref{linearReformulationSteerableDemand} that for the steerable patient demands holds
\begin{align}
\max_{\scenSteerable \in \BudgetedUncertainty_\indexSteerable} \sum_{\demand \in U} \scenSteerable_\demand = 
\min \left\lbrace \sum_{\demand \in U} \upperSteerable_\demand,\; 
\demandBound_\indexSteerable- \sum_{\demand \in \Demands \setminus U} \lowerSteerable_\demand \right\rbrace. \tag{\ref{linearReformulationSteerableDemand}}
\end{align}
Moreover, as the assignment of the unsteerable demands in the separation problem is fixed, we can obtain an analogous result for the unsteerable patient demands. 
To that end, let $\closestFixedFacility{\hat{\varUnsteerableDemand}}{\demand}\in \ConsiderationSet(\demand)$ denote the unique treatment facility that is targeted by all unsteerable patient demand originating in $\demand\in \Demands$, i.e., $\hat{\varUnsteerableDemand}_{\demand k} =1$ if and only if $k=\closestFixedFacility{\hat{\varUnsteerableDemand}}{\demand}$.
Moreover, let $\InducedNeighborhood \coloneqq \left\{\demand \in \Demands \colon \closestFixedFacility{\hat{\varUnsteerableDemand}}{\demand}\in \ConsiderationSet(U) \right\}$
denote all demand origins whose unsteerable patient demands target a treatment facility in $\ConsiderationSet(U)\subseteq \Locations \cup \Practices$. 
Now we get the following:
\begin{align}
\begin{split}
\max_{\scenUnsteerable \in \BudgetedUncertainty_\indexUnsteerable} \sum_{k\in N(U)} \sum_{\demand \in N(k)} \scenUnsteerable_\demand \,\hat{\varUnsteerableDemand}_{\demand k}  
&= \underset{\scenUnsteerable \in \BudgetedUncertainty_\indexUnsteerable}{\max} \ \sum_{\demand \in \InducedNeighborhood} \scenUnsteerable_\demand \\
&= \min\left\lbrace \sum_{\demand \in \InducedNeighborhood} \upperUnsteerable_\demand,\; 
\demandBound_\indexUnsteerable- \sum_{\demand \in \Demands \setminus \InducedNeighborhood} \lowerUnsteerable_\demand\right\rbrace.
\end{split}
\label{linearReformulationUnSteerableDemand}
\end{align}

Substituting \eqref{linearReformulationSteerableDemand} and \eqref{linearReformulationUnSteerableDemand} into \eqref{SepOriginal}, we obtain the following reformulation of the separation problem: Is there a subset $U \subseteq \Demands$ such that
\begin{align}
\tag{\ref{SepOriginal}'}
\label{ReformulationSepOriginal}
\begin{split}
&\min \left\lbrace \sum_{\demand \in U} \upperSteerable_\demand,\; 
\demandBound_\indexSteerable- \sum_{\demand \in \Demands \setminus U} \lowerSteerable_\demand \right\rbrace + \min\left\lbrace \sum_{\demand \in \InducedNeighborhood} \upperUnsteerable_\demand,\; 
\demandBound_\indexUnsteerable- \sum_{\demand \in \Demands \setminus \InducedNeighborhood} \lowerUnsteerable_\demand\right\rbrace\\
&\hspace*{1.7cm} > \sum_{\location \in \ConsiderationSet_\Locations(U)} \capacitySession \,\hat{x}_\location   + \sum_{\practice \in \ConsiderationSet_\Practices(U)} \capacityPractice_\practice \;? 
\end{split}
\end{align}

We show, that deciding the separation problem is NP-complete by a reduction from subset sum inspired by the one in~\cite{KRUMKE2019845}.
\begin{figure}
	\centering
	\includegraphics{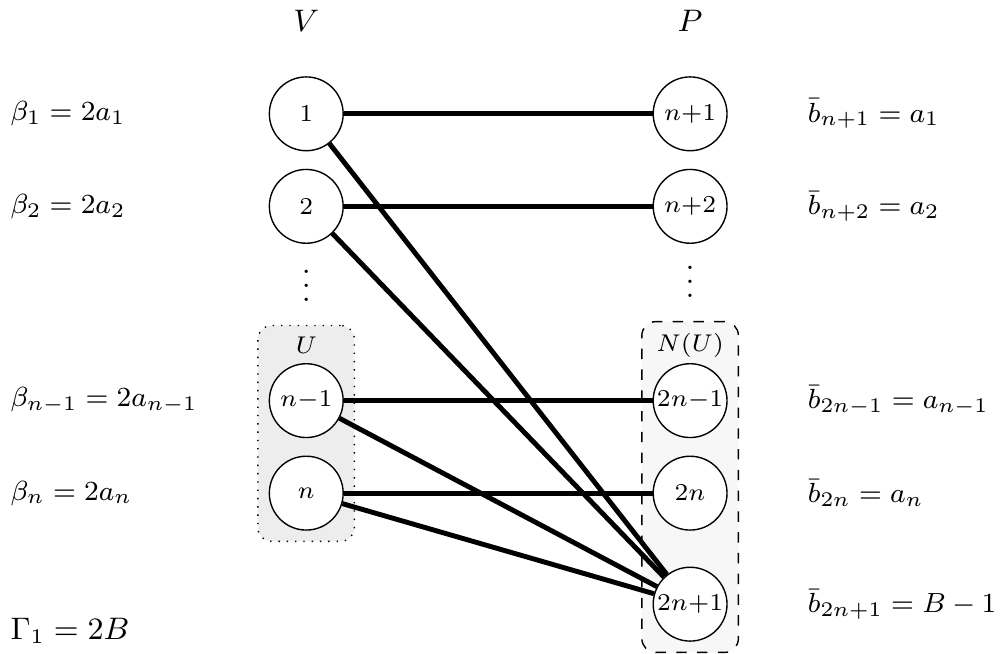}
	\caption{Constructed separation instance $\mathcal{I}'$ for given subset sum instance $\mathcal{I}=(A,B)$. Consideration sets are encoded by edges in bipartite graph.}
	\label{fig:reduction}
\end{figure}
\begin{theorem}
	The separation problem for $\GammaRobustBendersFormulation$ is NP-complete.
\end{theorem}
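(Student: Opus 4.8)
Membership in NP is immediate: given a candidate subset $U \subseteq \Demands$, the left-hand side of \eqref{ReformulationSepOriginal} can be evaluated in polynomial time, since both minima are over two explicitly computable sums, the set $\InducedNeighborhood$ is determined by the fixed assignment $\hat{\varUnsteerableDemand}$, and the right-hand side is a sum over $\ConsiderationSet_\Locations(U)$ and $\ConsiderationSet_\Practices(U)$. Thus $U$ serves as a polynomial-size certificate whose validity is checkable in polynomial time.

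For NP-hardness, I would reduce from subset sum: given an instance $\mathcal{I} = (A, B)$ with items $a \in A$ of integer size and target $B$, the task is to decide whether some subset of $A$ sums exactly to $B$. Following the construction suggested by Figure~\ref{fig:reduction} and the cited reduction in~\cite{KRUMKE2019845}, I would build a separation instance $\mathcal{I}'$ in which demand origins encode the items of $A$, with the steerable demand bounds $\lowerSteerable_\demand, \upperSteerable_\demand$ and the budget $\demandBound_\indexSteerable$ chosen so that the term $\min\{\sum_{\demand \in U}\upperSteerable_\demand,\ \demandBound_\indexSteerable - \sum_{\demand \in \Demands \setminus U}\lowerSteerable_\demand\}$ forces the selected set $U$ to correspond to a subset of $A$ of a prescribed total size. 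The treatment facilities, their capacities $\capacitySession \hat{x}_\location$ and $\capacityPractice_\practice$, and the consideration sets encoded by the bipartite edges would be calibrated so that the right-hand side of \eqref{ReformulationSepOriginal} equals a fixed threshold, and so that the strict inequality can hold if and only if the encoded subset achieves the target $B$ exactly. The unsteerable term can be neutralized — for instance by setting $\lowerUnsteerable_\demand = \upperUnsteerable_\demand$ or by choosing $\hat{\varUnsteerableDemand}$ so that $\InducedNeighborhood$ contributes a constant — reducing \eqref{ReformulationSepOriginal} to a condition purely on the steerable part.

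The crux of the argument is the equivalence: I would show that the subset sum instance $\mathcal{I}$ is a yes-instance if and only if the constructed separation instance $\mathcal{I}'$ is a yes-instance. The forward direction takes a subset of $A$ summing to $B$ and assembles the corresponding $U$, verifying that \eqref{ReformulationSepOriginal} is strictly violated. The reverse direction takes a violating $U$ and extracts a valid subset of $A$. \textbf{The main obstacle I anticipate} is engineering the two nested minima so that a strict violation pins down the total demand of $U$ to the exact target $B$ rather than merely bounding it: the $\min$ operators create a kink, and one must verify that the gap between the left- and right-hand sides is positive precisely at equality with $B$ and nonpositive otherwise. Getting the interplay between the cap $\demandBound_\indexSteerable$, the interval endpoints, and the facility capacities exactly right — so that overshooting or undershooting $B$ both fail the strict inequality — is the delicate part of the reduction, and is presumably where the construction of Figure~\ref{fig:reduction} earns its keep.
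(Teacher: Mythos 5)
Your NP-membership argument is correct, and your high-level plan (reduce from subset sum, neutralize the unsteerable term by zeroing its interval) is the same route the paper takes. But the proposal stops exactly where the proof has to start: you never exhibit the construction. Everything load-bearing is phrased as intent --- ``would be calibrated,'' ``would be chosen so that'' --- and you explicitly defer the crux (making a strict violation occur \emph{only} at the exact target $B$) to the unseen construction, calling it the place ``where the construction of Figure~\ref{fig:reduction} earns its keep.'' That deferred step is the entire mathematical content of the hardness direction; without a concrete gadget and a verified two-way equivalence, what you have is a statement of the proof obligation, not a discharge of it.

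The missing idea is a specific two-sided pinching gadget. The paper takes $\Demands=\{1,\dots,n\}$, $\Locations=\emptyset$, one \emph{private} practice per item with capacity $\capacityPractice_\practice=a_{\practice-n}$, and one \emph{shared} practice $2n+1$ of capacity $B-1$ lying in every consideration set, i.e.\ $\ConsiderationSet(\demand)=\{\demand+n,\,2n+1\}$; it sets $\lowerSteerable_\demand=\lowerUnsteerable_\demand=\upperUnsteerable_\demand=0$, and --- this is the key trick --- \emph{doubles} the demands, $\upperSteerable_\demand=2a_\demand$ with budget $\demandBound_\indexSteerable=2B$. Writing $A'=\{a_\demand : \demand\in U\}$, condition \eqref{ReformulationSepOriginal} then collapses to
\begin{align*}
\min\left\{2\sum_{a\in A'}a,\;2B\right\} \;>\; \sum_{a\in A'}a + B - 1,
\end{align*}
and the doubling makes hitting $B$ exactly the unique way to win: if $\sum_{a\in A'}a\le B-1$, the minimum equals $2\sum_{a\in A'}a$ and the inequality forces $\sum_{a\in A'}a> B-1$, a contradiction by integrality; if $\sum_{a\in A'}a\ge B+1$, the minimum equals $2B$ and the inequality forces $\sum_{a\in A'}a\le B$, again a contradiction; while at $\sum_{a\in A'}a=B$ it reads $2B>2B-1$, which holds. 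This calibration --- private capacities equal to the item sizes, shared slack $B-1$, demands scaled by $2$ relative to capacities --- is precisely the ``delicate part'' your proposal identifies as an obstacle but does not resolve, so the hardness half of your proof is genuinely incomplete.
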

\begin{proof}
	To show the NP-completeness of the separation problem, we perform a reduction from the subset sum problem which is known to be NP-complete~\cite{10.5555/578533}.
	Let us recall the subset sum problem:
	Given a finite set $A=\{a_1,\dots, a_n\} \subseteq \N$ and an integer $B\in \N$, the subset sum problem asks whether there exists a subset $A'\subseteq A$ with $\sum_{a\in A'} a = B$.
	
	Given an instance $\mathcal{I}=(A, 
	B)$ of the subset sum problem, we construct an instance $\mathcal{I}'$ of the separation problem for $\GammaRobustBendersFormulation$ as follows:
	Let $\Demands = \{1,\dots, n\}$, $\Locations = \emptyset$, and $\Practices= \{n+1, \dots, 2n, 2n+1 \}$.  
	We set $\lowerSteerable_\demand = \lowerUnsteerable_\demand = \upperUnsteerable_\demand = 0$ for all $\demand
\in \Demands$, that is we do not consider unsteerable patient demands. 
    Moreover, we set $\upperSteerable_\demand= 2 a_\demand$ for all $\demand \in \Demands$.
    Concerning the practices' treatment capacities, we set $\capacityPractice_\practice= a_{\practice-n}$ for all $\practice \in \Practices \setminus \{2n+1\}$ and $\capacityPractice_{2n+1} = B-1$.
    The consideration sets are defined as $\ConsiderationSet(\demand)= \{\demand+n,\, 2n+1\}$ for all $\demand\in \Demands$ and we choose $\demandBound_\indexSteerable= 2B$. The construction of $\mathcal{I}'$ is visualized in Figure~\ref{fig:reduction}.
    
    For our choice of parameters, the separation problem for $\GammaRobustBendersFormulation$ reduces to: Is there a subset $U \subseteq \Demands$ such that $\min \left\lbrace \sum_{\demand \in U} \upperSteerable_\demand,\; 
    \demandBound_\indexSteerable \right\rbrace > \sum_{\practice \in \ConsiderationSet(U)} \capacityPractice_\practice$?
    
    We show that the constructed instance~$\mathcal{I}'$ of the separation problem is a yes-instance if and only if the subset sum instance~$\mathcal{I}$ is a yes-instance.
    
    First, assume that $\mathcal{I}$ is a yes-instance and let $A'\subseteq A$ with $\sum_{a\in A'} a = B$.
    Then for $U=\{\demand\in \Demands: a_\demand \in A'\}$ it holds that 
       \begin{align*}
    \min \left\lbrace \sum_{\demand \in U} \upperSteerable_\demand,\; 
    \demandBound_\indexSteerable \right\rbrace
    = \min \left\lbrace \sum_{a \in A'} 2a,\; 
    2B \right\rbrace = 2B > 2B-1 = \sum_{a\in A'} a + B-1 = \sum_{\practice \in \ConsiderationSet(U)} \capacityPractice_\practice  
    \end{align*}
    which shows that $\mathcal{I}'$ is a yes-instance.
    
    Conversely, assume that $\mathcal{I}'$ a yes-instance and let $U\subseteq \Demands$ be a subset of demand origins with $\min \left\lbrace \sum_{\demand \in U} \upperSteerable_\demand,\; 
    \demandBound_\indexSteerable \right\rbrace > \sum_{\practice \in \ConsiderationSet(U)} \capacityPractice_\practice$.
    We show that $A'=\{a_\demand \in A : \demand \in U\}$ satisfies $\sum_{a\in A'} a = B$.
    To that end, we begin by showing that 
    \begin{align}
    \sum_{\demand \in U} \upperSteerable_\demand \leq
    \demandBound_\indexSteerable \Leftrightarrow \sum_{a\in A'} 2a \leq 2B \Leftrightarrow \sum_{a\in A'} a \leq B.
    \label{ReductionP1}
    \end{align}
    Assume the contrary, i.e., that $\sum_{a\in A'} a > B$.
    Then by our choice of $U$, we have that
    \begin{align*}
    	\min \left\lbrace \sum_{\demand \in U} \upperSteerable_\demand,\; 
    	\demandBound_\indexSteerable \right\rbrace =  \demandBound_\indexSteerable > \sum_{\practice \in \ConsiderationSet(U)} \capacityPractice_\practice \Leftrightarrow 2B > \sum_{a\in A'} a + B -1 \Leftrightarrow \sum_{a\in A'} a \leq B
    \end{align*}
    which is a contradiction and thus proves \eqref{ReductionP1}.
    By our choice of $U$, we moreover get
      \begin{align}
    \min \left\lbrace \sum_{\demand \in U} \upperSteerable_\demand,\; 
    \demandBound_\indexSteerable \right\rbrace 
    =  \sum_{a\in A'} 2a > \sum_{\practice \in \ConsiderationSet(U)} \capacityPractice_\practice 
    &\Leftrightarrow \sum_{a\in A'} 2a > \sum_{a\in A'} a + B -1 \notag \\
    &\Leftrightarrow \sum_{a\in A'} a \geq B
    \label{ReductionP2}
    \end{align}
    Combining \eqref{ReductionP1} and \eqref{ReductionP2}, it follows that $\sum_{a\in A'} a = B$ and thus $\mathcal{I}$ is a yes-instance.
    
    Finally, we remark that the separation problem for $\GammaRobustBendersFormulation$ is contained in NP as we can compute all terms in \eqref{ReformulationSepOriginal} for given $U\subseteq \Demands$ in polynomial time.
\end{proof}

Just as in the deterministic setting, the separation problem for $\GammaRobustBendersFormulation$ is trivial if we only consider unsteerable demands due to Assumption~\ref{ass:1}.

To decide the separation problem, we propose an integer linear program based on formulation \eqref{ReformulationSepOriginal}.
This formulation requires variables to encode our choice of $U\subseteq \Demands$ as well as the derived sets $\ConsiderationSet(U) \subseteq \Locations \cup \Practices$ and  $\InducedNeighborhood \subseteq \Demands$.
Therefore, we introduce variables $\varEncodeU_\demand \in \{0,1\}$ that take the value one if demand origin $\demand \in \Demands$ is in the set $U$ and zero otherwise.
Variables $\varEncodeN_k \in \{0,1\}$ take the value one if treatment facility $k\in \Locations \cup \Practices$ is in the consideration set $N(U)$ and zero otherwise.
Finally, we introduce variables $\varEncodeV_\demand \in \{0,1\}$ that take the value one  if $v\in \InducedNeighborhood$ and zero otherwise. 
To linearize the inner minimization problems in \eqref{ReformulationSepOriginal}, we furthermore introduce continuous variables $d_\indexSteerable\geq 0$ and $d_\indexUnsteerable\geq 0$ which attain the value of the respective worst case patient demand for the chosen subset $U \subseteq \Demands$ in an optimal solution. We can now formulate the separation problem as follows:
\begin{maxi!}|s|[2]
	{d_\indexSteerable,d_\indexUnsteerable, \varEncodeU, \varEncodeN, \varEncodeV}
	{d_\indexSteerable + d_\indexUnsteerable - \sum_{\location \in \Locations} \capacitySession \hat{x}_\location \varEncodeN_\location - \sum_{\practice \in \Practices} \capacityPractice_\practice \varEncodeN_\practice \label{23a}}
	{}
	{\SeparationProblemFormulation\quad}
	\addConstraint{\varEncodeN_k}{\geq \varEncodeU_\demand}{\quad \forall \demand \in \Demands,\, k\in \ConsiderationSet(\demand) \label{23b}}
	\addConstraint{\varEncodeV_\demand}{\leq \sum_{\demand' \in \ConsiderationSet(\closestFixedFacility{\hat{\varUnsteerableDemand}}{\demand})} \varEncodeU_{\demand'}}{\quad \forall \demand \in \Demands \label{23c}}
	\addConstraint{d_\indexSteerable}{\leq \sum_{\demand \in \Demands} \upperSteerable_\demand \varEncodeU_\demand}{\label{23d}}
	\addConstraint{d_\indexSteerable}{\leq \demandBound_\indexSteerable - \sum_{\demand \in \Demands} \lowerSteerable_\demand (1- \varEncodeU_\demand)}{\label{23e}}
	\addConstraint{d_\indexUnsteerable}{\leq \sum_{\demand \in \Demands} \upperUnsteerable_\demand \varEncodeV_\demand}{\label{23f}}
	\addConstraint{d_\indexUnsteerable}{\leq \demandBound_\indexUnsteerable - \sum_{\demand \in \Demands} \lowerUnsteerable_\demand (1- \varEncodeV_\demand)}{\label{23g}}
	\addConstraint{\varEncodeU_\demand, \varEncodeV_\demand}{\in \{0,1\}}{\quad \forall \demand \in \Demands \label{23h}}
	\addConstraint{\varEncodeN_k}{\in \{0,1\}}{\quad \forall k \in \Locations \cup \Practices \label{23i}}
	\addConstraint{d_\indexSteerable,d_\indexUnsteerable}{\geq 0.}{\label{23j}}
\end{maxi!}
Thereby, inequalities \eqref{23b} enforce that $\varEncodeN_k$ for $k\in \Locations \cup \Practices$ encode the consideration set $\ConsiderationSet(U)$ and constraints \eqref{23c} ensure that $\varEncodeV_\demand$ for $\demand \in \Demands$ encode $\InducedNeighborhood$.
The remaining inequalities \eqref{23d}--\eqref{23g} model the reformulated inner minimization problems for the steerable and unsteerable patient demands derived in  \eqref{linearReformulationSteerableDemand} and \eqref{linearReformulationUnSteerableDemand}, respectively.

Given an optimal solution $(\hat{d}_\indexSteerable,\hat{d}_\indexUnsteerable, \hat{\varEncodeU}, \hat{\varEncodeN}, \hat{\varEncodeV})$ to $\SeparationProblemFormulation$, we can decide the separation problem as follows.
If the solution value of $(\hat{d}_\indexSteerable,\hat{d}_\indexUnsteerable, \hat{\varEncodeU}, \hat{\varEncodeN}, \hat{\varEncodeV})$ is non-positive, it follows that the optimal solution $(\hat{y}, \hat{x}, \hat{\varUnsteerableDemand}, \hat{\dualBudgetCap}, \hat{\kappa}, \hat{\rho})$ to the restricted master problem is also an optimal solution to $\GammaRobustBendersFormulation$.
Otherwise, we get the violating subset $\hat{U} \coloneqq \{\demand \in \Demands: \hat{\varEncodeU}_\demand=1\}$ which is added to $\SubsetOfSubsets$ and we iterate by resolving the restricted master problem.

\section{Disaggregation of sessions}
\label{sec:multi_period}
The previous sections considered the (robust) strategic planning problem for MMUs in a session-aggregated form.
That is, we modeled the patient demands at each demand origin through a single aggregated value and decided on the total number of MMU sessions at each MMU operation site.
Such an aggregation has several shortcomings, as it artificially smoothes out patient demands and entails a post-processing procedure in order to distribute the scheduled MMU operations throughout the week.

To overcome these drawbacks, we disaggregate the strategic planning problem for MMUs by considering session-specific demands, treatment capacities, and MMU operations.
Thereby, steerable patient demands are allowed to be assigned between sessions to balance out each session's workload.
The \textit{session-specific strategic planning problem for MMUs} then asks which MMU operation sites should be serviced in what sessions in order to satisfy all patient demands at minimum cost.

We formalize this problem by letting $\Sessions$ denote the sessions of the week which generally comprise a morning and an afternoon session for every working day of the week, i.e., $\Sessions=\{\text{MO}_{\text{AM}},\dots,\text{SAT}_{\text{PM}}\}$.
To model session-specific treatment capacities, we consider the session-expanded potential MMU operation sites $\eLocations\coloneqq \Locations \times \Sessions$ where each site $\elocation=(\location,\session) \in \eLocations$ can be serviced at most once.
Analogously, we consider the session-expanded practices $\ePractices = \Practices \times \Sessions$ with treatment capacity $\capacityPractice_{\epractice} \in \N$ for every $\epractice=(\practice,\session)\in \ePractices$.
This enables, e.g., the modeling of the common practice that German physicians are closed on Wednesday afternoons by setting $\capacityPractice_{(\practice,\text{WED}_{\text{PM}})}=0$ for all $\practice \in \Practices$.
The definition of session-expanded treatment facilities gives rise to the introduction of session-specific strategic MMU operation plans.

\begin{definition}
	A \textit{session-specific strategic MMU operation plan} is a function $\MMUoperation\colon \eLocations \to \{0,1\}$.
	The \textit{cost} of a session-specific strategic MMU operation plan $\MMUoperation$ is defined by the costs of setting up sites and operating MMU sessions, i.e., we have $c(\MMUoperation)\coloneqq \sum_{\location\in\Locations: \exists \session \in \Sessions : \MMUoperation_{(\location, \session)} > 0} \costLocation_\location +  \sum_{\elocation \in \eLocations} \costSession \, \MMUoperation_{\elocation}$.
\end{definition}

To model session-specific patient demands, we consider the session-expanded demand origins $\eDemands = \Demands \times \Sessions$ with a steerable patient demand $\steerableDemand_{\edemand} \in \N$ and an unsteerable patient demand $\unsteerableDemand_{\edemand} \in \N$ for each $\edemand\in \eDemands$.
While unsteerable patient demands immediately visit the closest considered operating treatment facility, steerable patient demands can be shifted between sessions.
Thus, we model two independent consideration sets for each $\edemand=(\demand,\session) \in \eDemands$:
a consideration set $\eSteerableConsiderationSet(\edemand) \subseteq \eLocations \cup \ePractices$ for the steerable patient demands, and a consideration set $\eUnsteerableConsiderationSet(\edemand) \subseteq (\Locations \cup \Practices) \times \{\session\}$ for the unsteerable patient demands.
As a result, we have to extend the definition of an assignment  of the steerable patient demands.

\begin{definition}
	A \textit{session-specific assignment} of the steerable patient demands is a set of functions $\left\lbrace \assignmentSteerablePatientDemand{\edemand}\right\rbrace _{\edemand \in\eDemands}$ with $\assignmentSteerablePatientDemand{\edemand} \colon \eSteerableConsiderationSet(\edemand) \to \N$ that distribute all steerable patient demands within their respective session-expanded consideration set, i.e., $\sum_{\boldsymbol{k} \in \eSteerableConsiderationSet(\edemand)} \assignmentSteerablePatientDemand{\edemand}(\boldsymbol{k})= \steerableDemand_{\edemand}$ for all $\edemand \in \eDemands$.
\end{definition}

Next, we define \textit{feasible} session-specific MMU operation plans.
To ease notation, let $\eSteerableConsiderationSet(\boldsymbol{k})\coloneqq 
\{\edemand\in \eDemands : \boldsymbol{k} \in \eSteerableConsiderationSet(\edemand) \}$ 
$\left( \eUnsteerableConsiderationSet(\boldsymbol{k})\coloneqq 
\{\edemand\in \eDemands : \boldsymbol{k} \in \eUnsteerableConsiderationSet(\edemand) \} \right) $ denote all session-expanded patient demand origins whose (un-)steerable  patient demands can target the treatment facility $\boldsymbol{k} \in 
\eLocations \cup \ePractices$.
Moreover, let $\eclosestFacility{m}{\edemand} \in \eUnsteerableConsiderationSet(\edemand)$ denote the closest considered operating treatment facility which is targeted by all unsteerable patient demands originating in $\edemand\in \eDemands$ for given session-specific MMU operation plan $\MMUoperation$.

\begin{definition}
	A session-specific strategic MMU operation plan $\MMUoperation$ is \textit{feasible} if there exists a session-specific assignment of the steerable patient demands $\left\lbrace \assignmentSteerablePatientDemand{\edemand}\right\rbrace _{\edemand \in\eDemands}$ that respects the session-specific treatment capacity at each treatment facility $\boldsymbol{k} \in \eLocations \cup \ePractices$, that is 
	\begin{align*}
	\sum_{\edemand \in \eDemands: \eclosestFacility{\MMUoperation}{\edemand}=\boldsymbol{k}} \unsteerableDemand_{\edemand} 
	+ \sum_{\edemand \in \eSteerableConsiderationSet(\boldsymbol{k})} \assignmentSteerablePatientDemand{\edemand}(\boldsymbol{k}) \leq \begin{cases}
	\capacityPractice_{\boldsymbol{k}} \quad &\text{if } \boldsymbol{k}\in \ePractices,\\
	\capacitySession \, \MMUoperation_{\boldsymbol{k}} & \text{if } \boldsymbol{k}\in \eLocations.
	\end{cases}
	\end{align*}
\end{definition}

Finally, we can employ the notion of a feasible session-specific MMU operation plan to formalize the definition of the session-specific strategic planning problem for MMUs.

\begin{definition}[$\sessionSpecificStrategicPlanningProblem$]
	Let $\Sessions$ denote the sessions of the week and let $\location\in \Locations$ be the potential MMU operation sites with setup costs $\costLocation_\location\in \N$.
	Moreover, let $\practice\in \Practices$ be the existing practices with treatment capacities $\capacityPractice_{(\practice,\session)}\in \N$ in session $\session \in \Sessions$.
	In every session $\session\in\Sessions$, each patient demand origin $\demand \in \Demands$ has steerable and unsteerable demands $\steerableDemand_{(\demand,\session)},\unsteerableDemand_{(\demand,\session)} \in \N$ that can be serviced within the consideration sets $\eSteerableConsiderationSet\left((\demand, \session)\right) \subseteq (\Locations\cup \Practices) \times \Sessions$ and $\eUnsteerableConsiderationSet\left((\demand, \session)\right) \subseteq (\Locations\cup \Practices) \times \{\session\}$, respectively.
	Then, the \textit{session-specific strategic planning problem for MMUs} ($\sessionSpecificStrategicPlanningProblem$) asks for a feasible session-specific strategic MMU operation plan of minimum cost, where every operated MMU session induces the cost $\costSession\in \N$ and yields a treatment capacity $\capacitySession\in \N$.
\end{definition}

As the $\sessionSpecificStrategicPlanningProblem$ only allows for a single MMU operation at every site $\elocation\in\eLocations$, the $\sessionSpecificStrategicPlanningProblem$ does not generalize the $\strategicPlanningProblem$ and thus the problem's strong NP-hardness does not follow from Theorem~\ref{NP_SPMMU}.
However, the reduction referenced in the proof of Theorem~\ref{NP_SPMMU} is still applicable for the $\sessionSpecificStrategicPlanningProblem$, as all subsets in this reduction are chosen at most once. 

\begin{theorem}
	The $\sessionSpecificStrategicPlanningProblem$ is strongly NP-hard.
\end{theorem}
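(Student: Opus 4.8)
The plan is to reduce from the same $3$-multiset multicover problem used in the proof of Theorem~\ref{NP_SPMMU}, but to make the reduction explicit in the session-specific setting, where each session-expanded site may be serviced at most once. The key observation, already hinted at in the paragraph preceding the statement, is that the original reduction uses $\capacitySession = 3$ together with a session capacity $\capacityLocation_\location$ so large that a site could in principle be used many times; to carry this over to the $\sessionSpecificStrategicPlanningProblem$, I would instead spread the potential multiplicity of each site across distinct sessions $\session \in \Sessions$. Concretely, for a given $3$-multiset multicover instance with ground set elements playing the role of demand origins and the multisets playing the role of facilities, I would create one session-expanded site $\elocation = (\location,\session)$ for each $(\location,\session)$ pair, so that operating $\location$ up to $k$ times in the aggregated model corresponds to operating $k$ distinct session copies of $\location$ in the disaggregated model.

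First I would fix the instance: set $\ePractices = \emptyset$ (equivalently all practice capacities zero), $\costSession = 1$, and all unsteerable demands $\unsteerableDemand_{\edemand} = 0$, so that only the steerable demands and the operation costs matter. I would set the steerable demands so that each demand origin must be covered the required number of times, and choose the consideration sets $\eSteerableConsiderationSet(\edemand)$ to mirror the incidence structure of the $3$-multiset multicover instance, i.e., demand origin $\demand$ may be served by site copy $(\location,\session)$ exactly when the corresponding multiset contains the corresponding element. Since $\capacitySession = 3$ and each session copy can be opened at most once, each opened copy supplies a capacity of $3$ to exactly the demand origins in its multiset, which is precisely the behavior of a single chosen multiset in $3$-multiset multicover. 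The number of sessions $|\Sessions|$ must be taken large enough to allow as many copies of each site as the multicover instance could conceivably need; since in the decision version the number of sets chosen is bounded, a polynomial number of session copies suffices.

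The crux of the argument is to verify the correctness of this correspondence in both directions while respecting the structural constraint that the reduction referenced in Theorem~\ref{NP_SPMMU} chooses each subset at most once. As noted in the excerpt, this property is exactly what makes the reduction transfer: because no multiset needs to be selected more than once in that particular reduction, assigning each potential selection its own session copy loses nothing, and a feasible session-specific operation plan of a given cost corresponds bijectively to a multicover of the same size. I would argue that a feasible cover of size $K$ yields a feasible session-specific MMU operation plan of cost $K$ by opening the corresponding session copies, and conversely that any feasible plan of cost $K$ yields a cover of size $K$ by reading off which site copies are opened, using that each copy covers exactly its associated element set with capacity $3$.

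The main obstacle I anticipate is the bookkeeping around site multiplicity versus session distinctness: I must ensure that opening several session copies of the same physical site $\location$ does not incur spurious setup savings or penalties that would break the cost equivalence. Because the cost $c(\MMUoperation)$ in the session-specific model charges the setup cost $\costLocation_\location$ only once per physical site (via the condition $\exists \session : \MMUoperation_{(\location,\session)} > 0$), I would neutralize this by setting $\costLocation_\location = 0$ for all $\location$, exactly as in the original reduction, so that only the per-session operation cost $\costSession = 1$ contributes and the total cost equals the number of opened copies. With setup costs zeroed out, the cost equivalence is immediate, and the remaining verification is the routine matching of feasibility conditions; hence the theorem follows from the strong NP-hardness of $3$-multiset multicover in Krumke et al.~\cite{KRUMKE2019845}.
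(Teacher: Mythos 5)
Your proposal is correct and ultimately rests on the same pillar as the paper's argument -- the strong NP-hardness of $3$-multiset multicover from Krumke et al.~\cite{KRUMKE2019845} together with the fact that the hard instances produced by that reduction select each multiset at most once -- but you package it differently. The paper's proof is a one-line observation: precisely because each subset is chosen at most once in those hard instances, they are already (with the same parameter choices as in Theorem~\ref{NP_SPMMU}, in particular $\costLocation_\location=0$) instances of the $\sessionSpecificStrategicPlanningProblem$ in which no site ever needs more than one operated session, so the hardness transfers verbatim even with a single session. You instead build an explicit session-expansion gadget: polynomially many copies $(\location,\session)$ of each site, so that choosing a multiset $k$ times in the aggregated model is simulated by opening $k$ distinct session copies. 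This buys generality -- your reduction would work from \emph{arbitrary} $3$-multiset multicover instances, with no structural assumption on solutions -- but it costs you the assumption that $|\Sessions|$ is an unbounded part of the input, which sits uneasily with the paper's modeling of $\Sessions$ as the constantly many sessions of a week. Since you also invoke the chosen-at-most-once property, that tension evaporates: under that property a single session copy per site suffices and your construction collapses to the paper's argument, so the extra copies are harmless but redundant. Your handling of the setup-cost subtlety (setting $\costLocation_\location=0$ to neutralize the fact that $c(\MMUoperation)$ charges the opening of a physical site only once across all its sessions) is exactly right and is the one detail the paper leaves implicit.
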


Comparing the $\sessionSpecificStrategicPlanningProblem$ to the $\strategicPlanningProblem$, we can observe that both problems are closely related.
In the following, we devise an integer linear programming formulation for the $\sessionSpecificStrategicPlanningProblem$ which is nearly identical to formulation $\OriginalFormulation$ from Section~\ref{sec:prob_def} and emphasizes the problems' common structure.
Let variables $y_\location \in \{0,1\}$ indicate whether site $\location \in \Locations$ is set up, let variables $x_{\elocation} \in \{0,1\}$ decide whether site $\elocation \in \eLocations$ is serviced by an MMU, and let variables $z_{\edemand \boldsymbol{k}}\in \N$ determine the 
steerable demand originating in $\edemand \in \eDemands$ that is 
assigned to treatment facility $\boldsymbol{k} \in \eSteerableConsiderationSet(\edemand)$.
Moreover, let variables $\varUnsteerableDemand_{\edemand \boldsymbol{k}} \in \{0,1\}$ indicate the 
closest operating treatment facility $\boldsymbol{k} \in \eUnsteerableConsiderationSet(\edemand)$ that is 
targeted by all unsteerable demands originating in $\edemand \in \eDemands$. 
To that end, let $\eOrder{\edemand} \colon \{1,\dots, 
|\eUnsteerableConsiderationSet(\edemand)|\} \to \eUnsteerableConsiderationSet(\edemand)$ define an order on the  consideration set  $\eUnsteerableConsiderationSet(\edemand)$ that is non-decreasing with respect to the treatment facility's distance $\dist\colon \eDemands \times (\eLocations \cup \ePractices) \to \N$ to demand origin $\edemand\in \eDemands$.
As in Section~\ref{sec:prob_def}, we denote all MMU operation sites and practices within the consideration set of unsteerable demands at $\edemand \in \eDemands$ by
 $\eUnsteerableConsiderationSet_{\eLocations}(\edemand) \coloneqq \eUnsteerableConsiderationSet(\edemand)\cap \eLocations$ and
$\eUnsteerableConsiderationSet_{\ePractices}(\edemand) \coloneqq \eUnsteerableConsiderationSet(\edemand)\cap \ePractices$, respectively.
We can now formulate the $\sessionSpecificStrategicPlanningProblem$ as follows:

\begin{mini!}|s|[2]
	{y,x,z,\varUnsteerableDemand}
	{\sum_{\location\in \Locations} \costLocation_\location \, 
		y_\location + \sum_{\elocation \in \eLocations} \costSession \, 
		x_{\elocation} \label{}}
	{}{\OriginalSessionSpecificFormulation\;\;\,}
	\addConstraint{x_{\elocation}}{\leq y_\location }{\quad \forall \elocation=(\location,\session)\in \eLocations \label{}}
	\addConstraint{\sum_{\boldsymbol{k} \in \eSteerableConsiderationSet(\edemand)} z_{\edemand \boldsymbol{k}}}{\geq 
		\steerableDemand_{\edemand}}{\quad \forall \edemand\in \eDemands\label{ez1}}
	\addConstraint{\sum_{\edemand\in \eSteerableConsiderationSet(\elocation)} z_{\edemand \elocation}  + \sum_{\edemand\in \eUnsteerableConsiderationSet(\elocation)} \unsteerableDemand_{\edemand} \, \varUnsteerableDemand_{\edemand \elocation}}{\leq 
		\capacitySession \, x_{\elocation}}{\quad \forall \elocation\in \eLocations \label{ez2}}
	\addConstraint{\sum_{\edemand\in \eSteerableConsiderationSet(\epractice)} z_{\edemand \epractice}  + \sum_{\edemand\in \eUnsteerableConsiderationSet(\epractice)} \unsteerableDemand_{\edemand} \, \varUnsteerableDemand_{\edemand \epractice}}{\leq 
		\capacityPractice_{\epractice}}{\quad \forall \epractice\in \ePractices \label{ez3}}
	\addConstraint{\sum_{\boldsymbol{k}\in \eUnsteerableConsiderationSet(\edemand)} \varUnsteerableDemand_{\edemand \boldsymbol{k}}}{\geq 
		1}{\quad \forall \edemand\in \eDemands \label{}}
	\addConstraint{\varUnsteerableDemand_{\edemand \elocation}}{\leq x_{\elocation}}{\quad \forall \edemand 
		\in \eDemands,\, \forall \elocation \in \eUnsteerableConsiderationSet_{\eLocations}(\edemand)
		\label{}}
	\addConstraint{\varUnsteerableDemand_{\edemand \elocation}}{\geq x_{\elocation} - 
		\sum_{i=1}^{\einvorder{\edemand}{\elocation} -1 } \varUnsteerableDemand_{\edemand, 
			\eorder{\edemand}{i}}}{\quad \forall \edemand \in \eDemands,\, \forall \elocation \in  
		\eUnsteerableConsiderationSet_{\eLocations}(\edemand) \label{}}
	\addConstraint{\varUnsteerableDemand_{\edemand \epractice}}{\geq 1 - 
		\sum_{i=1}^{\einvorder{\edemand}{\epractice} -1} \varUnsteerableDemand_{\edemand, 
			\eorder{\edemand}{i}}}{\quad \forall \edemand \in \eDemands,\, \forall \epractice \in 
		\eUnsteerableConsiderationSet_{\ePractices}(\edemand)   \label{}}
	\addConstraint{x_{\elocation}\in \{0,1\},\; y_\location\in \{0,1\} }{}{\quad \forall \elocation=(\location, \session) \in 
		\eLocations}
	\addConstraint{\varUnsteerableDemand_{\edemand \boldsymbol{k}}\in \{0,1\}}{}{\quad \forall \edemand \in \eDemands,\, \forall \boldsymbol{k}\in 
		\eUnsteerableConsiderationSet(\edemand)}
	\addConstraint{z_{\edemand \boldsymbol{k}}\in \N}{}{\quad \forall \edemand \in \eDemands,\, \forall \boldsymbol{k}\in 
		\eSteerableConsiderationSet(\edemand). \label{ez4}}
\end{mini!}

Putting formulations $\OriginalFormulation$ and $\OriginalSessionSpecificFormulation$ side-by-side, we can confirm that the disaggreation of sessions leads to a structurally identical problem.
Consequently, all results from Section~\ref{sec:prob_def} can be directly transferred to $\sessionSpecificStrategicPlanningProblem$.
In particular, we can analogously show the correctness of the formulation $\OriginalSessionSpecificFormulation$.

\begin{theorem}
	$\OriginalSessionSpecificFormulation$ is an integer linear formulation for the $\sessionSpecificStrategicPlanningProblem$.
\end{theorem}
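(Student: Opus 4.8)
The plan is to reuse, essentially verbatim, the argument that establishes Theorem~\ref{thm_first_formulation} for $\OriginalFormulation$, exploiting that $\OriginalSessionSpecificFormulation$ arises from $\OriginalFormulation$ by the relabeling $\Locations \mapsto \eLocations$, $\Practices \mapsto \ePractices$, $\Demands \mapsto \eDemands$ together with a few purely local modifications whose effect I would verify separately. Concretely, I would exhibit a cost-preserving correspondence between feasible session-specific strategic MMU operation plans $\MMUoperation$ and integer-feasible solutions $(y,x,z,\varUnsteerableDemand)$ of $\OriginalSessionSpecificFormulation$, and then observe that the objective $\sum_{\location\in\Locations} \costLocation_\location \, y_\location + \sum_{\elocation\in\eLocations} \costSession \, x_{\elocation}$ matches the cost $c(\MMUoperation)$ under the session-specific cost definition, since $y_\location$ acts precisely as the indicator ``some session at $\location$ is operated''.

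For the forward direction I would start from a feasible plan $\MMUoperation$ together with a witnessing session-specific assignment $\{\assignmentSteerablePatientDemand{\edemand}\}_{\edemand\in\eDemands}$, and set $x_{\elocation} := \MMUoperation_{\elocation}$, $y_\location := 1$ exactly when some session of $\location$ is operated, $z_{\edemand \boldsymbol{k}} := \assignmentSteerablePatientDemand{\edemand}(\boldsymbol{k})$, and $\varUnsteerableDemand_{\edemand \boldsymbol{k}} := 1$ iff $\boldsymbol{k} = \eclosestFacility{\MMUoperation}{\edemand}$. Constraints \eqref{ez1}--\eqref{ez3} then hold because the assignment distributes each steerable demand inside its consideration set and respects every facility's capacity, while the remaining inequalities hold because $\varUnsteerableDemand$ by construction marks the unique closest operating facility per demand origin in the relevant session.

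The reverse direction is where the real work sits, and it is the step I expect to be the main obstacle. Given an integer-feasible $(y,x,z,\varUnsteerableDemand)$ I would set $\MMUoperation_{\elocation} := x_{\elocation}$ and recover the assignment from $z$; I then must argue that the $\varUnsteerableDemand$-constraints force, for every $\edemand=(\demand,\session)$, that $\varUnsteerableDemand_{\edemand \boldsymbol{k}} = 1$ for the closest operating facility $\boldsymbol{k} = \eclosestFacility{\MMUoperation}{\edemand}$, so that the capacity rows \eqref{ez2}--\eqref{ez3} genuinely encode feasibility of the plan for the unsteerable walk-ins. This is the same telescoping argument over the distance order $\eOrder{\edemand}$ used for Theorem~\ref{thm_first_formulation}: the covering inequality $\sum_{\boldsymbol{k}} \varUnsteerableDemand_{\edemand \boldsymbol{k}} \geq 1$ together with the lower bounds on $\varUnsteerableDemand_{\edemand \elocation}$ and $\varUnsteerableDemand_{\edemand \epractice}$ propagate an indicator down the order until the first operating facility is reached, while $\varUnsteerableDemand_{\edemand \elocation} \leq x_{\elocation}$ prevents targeting a non-operating expanded site.

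The local differences I would then check explicitly --- none of which disturbs the argument --- are: (i) $x$ is binary with the constraint $x_{\elocation} \leq y_\location$ in place of $x_\location \leq \capacityLocation_\location \, y_\location$, reflecting that each expanded site is serviced at most once; (ii) the split into a steerable consideration set $\eSteerableConsiderationSet(\edemand)$ that may span sessions and an unsteerable consideration set $\eUnsteerableConsiderationSet(\edemand) \subseteq (\Locations\cup\Practices)\times\{\session\}$ confined to the session of $\edemand$; and (iii) the fact that unsteerable targeting is governed by $x_{\elocation}$ rather than $y_\location$, because a walk-in in a given session can only be served by a facility operating in that same session. Since each modification is a faithful transcription of the corresponding feature of the $\strategicPlanningProblem$, the correctness proof carries over \emph{mutatis mutandis}.
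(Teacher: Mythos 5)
Your proposal is correct and takes essentially the same route as the paper: the paper proves this theorem by observing that $\OriginalSessionSpecificFormulation$ is structurally identical to $\OriginalFormulation$ under the session-expansion relabeling, so that the correctness proof of Theorem~\ref{thm_first_formulation} (the plan--solution correspondence and the telescoping argument forcing $\varUnsteerableDemand$ to mark the closest operating facility) transfers analogously. Your write-up carries out exactly this transfer, merely making explicit the local differences (binary $x$ with $x_{\elocation}\leq y_\location$, the split consideration sets, and targeting governed by $x_{\elocation}$ instead of $y_\location$) that the paper leaves implicit.
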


Moreover, we can apply the Benders decomposition approach from Section~\ref{sec:prob_def} to $\OriginalSessionSpecificFormulation$ to obtain the following analogous result.
\begin{theorem}
	Constraints \eqref{ez1}--\eqref{ez3}, \eqref{ez4} in $\OriginalSessionSpecificFormulation$ can be equivalently substituted by 
	\begin{align}
	&  \sum_{\edemand \in \boldsymbol{U}} \steerableDemand_{\edemand} + \sum_{\boldsymbol{k}\in \eSteerableConsiderationSet(\boldsymbol{U})} \sum_{\edemand \in \eUnsteerableConsiderationSet(\boldsymbol{k})} \unsteerableDemand_{\edemand} \,\varUnsteerableDemand_{\edemand \boldsymbol{k}} 
	\leq \sum_{\elocation \in \eSteerableConsiderationSet_{\eLocations}(\boldsymbol{U})} \capacitySession \,x_{\elocation}   + \sum_{\epractice \in \eSteerableConsiderationSet_{\ePractices}(\boldsymbol{U})} \capacityPractice_{\epractice} \quad  &\forall \boldsymbol{U}\subseteq \eDemands,
	\end{align}
	where	$\eSteerableConsiderationSet(\boldsymbol{U})\coloneqq  \bigcup_{\edemand \in \boldsymbol{U}} \eSteerableConsiderationSet(\edemand)$, $\eSteerableConsiderationSet_{\eLocations}(\boldsymbol{U})\coloneqq  \eSteerableConsiderationSet(\boldsymbol{U}) \cap \eLocations$, and $\eSteerableConsiderationSet_{\ePractices}(\boldsymbol{U})\coloneqq  \eSteerableConsiderationSet(\boldsymbol{U}) \cap \ePractices$ for $\boldsymbol{U}\subseteq \eDemands$.
\end{theorem}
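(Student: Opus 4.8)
The plan is to mirror the Benders derivation of Section~\ref{sec:prob_def} essentially verbatim, exploiting the structural identity between $\OriginalFormulation$ and $\OriginalSessionSpecificFormulation$ already noted in the text. First I would fix the first-stage variables $\hat{y}$, $\hat{x}$, $\hat{\varUnsteerableDemand}$ and collect constraints \eqref{ez1}--\eqref{ez3} together with the integrality \eqref{ez4} into a session-specific Benders subproblem in the steerable-assignment variables $z$. As in Section~\ref{sec:prob_def}, I would absorb the now-constant unsteerable contribution into residual treatment capacities
\begin{align*}
\helper_{\elocation} &\coloneqq \capacitySession \, \hat{x}_{\elocation} - \sum_{\edemand \in \eUnsteerableConsiderationSet(\elocation)} \unsteerableDemand_{\edemand}\, \hat{\varUnsteerableDemand}_{\edemand \elocation} & \forall \elocation \in \eLocations,\\
\helper_{\epractice} &\coloneqq \capacityPractice_{\epractice} - \sum_{\edemand \in \eUnsteerableConsiderationSet(\epractice)} \unsteerableDemand_{\edemand}\, \hat{\varUnsteerableDemand}_{\edemand \epractice} & \forall \epractice \in \ePractices,
\end{align*}
which are non-negative under the session-specific analogue of Assumption~\ref{ass:1}. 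The subproblem then asks for a nonnegative integral $z$ satisfying $\sum_{\boldsymbol{k}\in \eSteerableConsiderationSet(\edemand)} z_{\edemand \boldsymbol{k}} \geq \steerableDemand_{\edemand}$ for all $\edemand$ and $\sum_{\edemand\in \eSteerableConsiderationSet(\boldsymbol{k})} z_{\edemand \boldsymbol{k}} \leq \helper_{\boldsymbol{k}}$ for all $\boldsymbol{k}\in\eLocations\cup\ePractices$.

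The crucial point is that this subproblem has exactly the bipartite demand-to-facility structure analyzed in Lemma~\ref{lem:TU}: every column (indexed by a pair $(\edemand,\boldsymbol{k})$ with $\boldsymbol{k}\in \eSteerableConsiderationSet(\edemand)$) contains at most two unit entries, one in a covering row of type \eqref{ez1} and one in a capacity row of type \eqref{ez2}--\eqref{ez3}. Hence total unimodularity follows from Hoffman and Gale exactly as before, and the integer subproblem is feasible iff its LP relaxation is, by the argument of Corollary~\ref{cor:LPrelax}. Next I would build the flow network $(\boldsymbol{G},\arcCapacity,s,t)$ on $\{s\}\cup \eDemands \cup (\eLocations \cup \ePractices) \cup \{t\}$ with source arcs $(s,\edemand)$ of capacity $\steerableDemand_{\edemand}$, sink arcs $(\boldsymbol{k},t)$ of capacity $\helper_{\boldsymbol{k}}$, and infinite-capacity demand-to-facility arcs $(\edemand,\boldsymbol{k})$ for $\boldsymbol{k}\in \eSteerableConsiderationSet(\edemand)$. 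The proof of Lemma~\ref{BendersEquiv} transfers word for word to show that subproblem feasibility is equivalent to a maximum $s$-$t$ flow of value at least $\sum_{\edemand \in \eDemands} \steerableDemand_{\edemand}$, and the max-flow min-cut argument of Theorem~\ref{thm:bendersoptcuts} then yields feasibility iff $\sum_{\edemand\in \boldsymbol{U}} \steerableDemand_{\edemand} \leq \sum_{\boldsymbol{k}\in \eSteerableConsiderationSet(\boldsymbol{U})} \helper_{\boldsymbol{k}}$ for all $\boldsymbol{U}\subseteq \eDemands$, which is precisely the claimed cut after reinserting the definition of $\helper_{\boldsymbol{k}}$.

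The one place demanding care---and the only genuine departure from Section~\ref{sec:prob_def}---is that the session-specific model carries two distinct consideration sets. The network's arc structure must be built from the steerable consideration set $\eSteerableConsiderationSet$, since it is the steerable assignment $z$ that is routed as flow, whereas the unsteerable demands enter solely through the residual capacities via $\eUnsteerableConsiderationSet$. I would verify that this bookkeeping matches the statement exactly: in the cut the inner double sum $\sum_{\boldsymbol{k}\in \eSteerableConsiderationSet(\boldsymbol{U})} \sum_{\edemand \in \eUnsteerableConsiderationSet(\boldsymbol{k})} \unsteerableDemand_{\edemand}\, \hat{\varUnsteerableDemand}_{\edemand \boldsymbol{k}}$ is precisely $\sum_{\boldsymbol{k}\in \eSteerableConsiderationSet(\boldsymbol{U})}(\capacitySession\hat{x}_{\boldsymbol{k}}\text{ or }\capacityPractice_{\boldsymbol{k}}) - \sum_{\boldsymbol{k}\in \eSteerableConsiderationSet(\boldsymbol{U})}\helper_{\boldsymbol{k}}$, so that moving it to the right-hand side reproduces $\sum_{\boldsymbol{k}\in \eSteerableConsiderationSet(\boldsymbol{U})}\helper_{\boldsymbol{k}}$. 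Likewise, the neighborhood-closure step (``$\eSteerableConsiderationSet(\boldsymbol{U})\subseteq S$, else the cut meets an infinite-capacity arc'') must be run with $\eSteerableConsiderationSet$ rather than a single set $N$. Once this substitution is tracked consistently, every remaining equivalence is identical to the single-consideration-set case, so I expect no real obstacle beyond keeping the two consideration sets apart.
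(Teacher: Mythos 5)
Your proposal is correct and follows exactly the route the paper intends: the paper gives no explicit proof for this theorem, stating only that the Benders decomposition approach of Section~\ref{sec:prob_def} applies analogously to $\OriginalSessionSpecificFormulation$, and your argument is precisely that analogy carried out in full (total unimodularity via Hoffman--Gale, LP relaxation, the flow network, and max-flow min-cut). Your attention to the one genuine subtlety---that the network arcs come from $\eSteerableConsiderationSet$ while the residual capacities $\helper_{\boldsymbol{k}}$ absorb the unsteerable demands via $\eUnsteerableConsiderationSet$---is exactly the bookkeeping needed to make the transfer rigorous, and it matches the statement's cut inequalities.
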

\noindent
The resulting Benders reformulation of $\OriginalSessionSpecificFormulation$ will be denoted by $\SessionSpecificBendersFormulation$.

Finally, we note that we can obviously also transfer all results from Section~\ref{sec:uncertainty} to the $\sessionSpecificStrategicPlanningProblem$ to obtain a constraint generation procedure for the robust session-specific strategic planning problem for MMUs with interval and budgeted uncertainty sets.
\section{Computational study}
\label{sec:computational_study}
The computational study focuses on the $\strategicPlanningProblem$ as well as the $\robustStrategicPlanningProblem$ with budgeted  and interval uncertainty sets introduced in Sections~\ref{sec:prob_def} and \ref{sec:uncertainty}, respectively.
Based on a set of realistic test instances, we compare the cost and quality of the strategic MMU operation plans resulting from the three different approaches and investigate the so-called price of robustness.
To that end, Section~\ref{subsec:instances} elaborates on the design of our set of test instances before we describe the study design in Section~\ref{subsec:design}. The actual computational results of the study are presented in Section~\ref{subsec:study}.

\subsection{Test instances}
\label{subsec:instances}
The primary care system that provides the template for our test instances comprises three predominantly rural municipalities in western Germany. 
In the following, we successively consider the modeling of the practices $\Practices$, potential MMU operation sites $\Locations$, and patient demand origins $\Demands$.

 \paragraph{Practices}
According to data provided by the local department of public health for the year $2017$, there are $\num{20}$ primary care physicians with health insurance accreditation in the considered primary care system.
All physicians in the system operate in clinical sessions according to a weekly recurring schedule.
The official consultation hours of each clinical session are publicly available from the Association of Statutory Health Insurance Physicians Nordrhein~\cite{KVN}.
In addition to the official consultation hours, we assume that the first hour after the end of each clinical session serves as a buffer during which physicians no longer accept new patients, but continue treating existing ones.
To estimate each physician's weekly treatment capacity, we divide the total weekly consultation time (including buffers) by the average primary care physician consultation time of \SI{7.6}{\min} as reported for Germany in~\cite{Irvinge017902}.
After aggregating physicians that work in joint practices and ceiling the derived treatment capacities, this yields our set of $|\Practices|=16$ practices with treatment capacities $\capacityPractice_\practice\in [206,602]$ for all $\practice\in\Practices$; see Figure~\ref{fig:instance}.

 \paragraph{MMU operation sites}
Concerning the potential MMU operation sites $\Locations$, we evenly distribute $|\Locations|=28$ sites among the agglomerations of the considered municipalities; compare Figure~\ref{fig:instance}.
Under the assumption that MMUs operate Monday to Friday in a morning and afternoon session, we set $\capacityLocation_\location=10$ for all sites $\location\in \Locations$.
The duration of an MMU session is assumed to be $\SI{3.5}{\hour}$ which is slightly above the average session duration (without buffers) of $\SI{3.36}{\hour}$ observed for the physicians in the considered primary care system.
As MMUs may need to change their location between sessions, we do not anticipate buffers after MMU sessions.
By dividing the duration of an MMU session by the average German primary care physician's consultation time of \SI{7.6}{\min}~\cite{Irvinge017902}, we end up with a treatment capacity of $\capacitySession=28$ per operated MMU session.
With regard to the operation cost of MMUs, we assume that all sites are equally expensive to set up and that opening a new site is twice as undesirable as operating a weekly MMU session.
Thus, we choose the setup cost $\costLocation_\location = 2$ for all sites $\location \in \Locations$ and set the cost per operated MMU session to $\costSession=1$.

\begin{figure}[tb]
	\centering
	\includegraphics{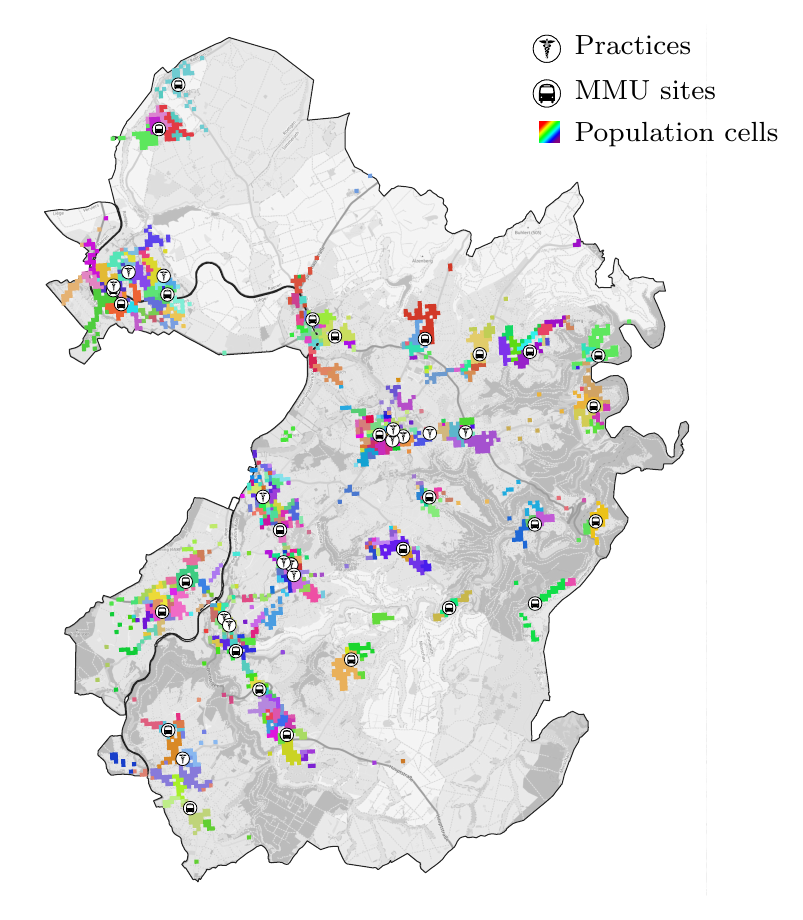}
	\caption[theCaption]{Locations of practices, potential MMU locations, and non-aggregated population cells clustered according to consideration sets for $\maxDist=\SI{6}{\kilo\meter}$. \footnotemark{} }
	\label{fig:instance}
\end{figure}

\footnotetext{Map tiles by Humanitarian OSM Team under CC0. Data by OpenStreetMap, under ODbL.}

\paragraph{Patient demand origins}
To model the patient demand origins $\Demands$, we rely on the population data determined by the latest German census conducted in $2011$~\cite{zensus2011}.
The census reports a total population of \num{35542} for the considered primary care system, specified at a resolution of $\num{2754}$ population cells measuring one hectare each.
To determine the consideration set of each population cell, we construct a street graph for the considered region based on map data from OpenStreetMap~\cite{osm} using OSMnx~\cite{BOEING2017126}.
The centers of the population cells and the treatment facilities are mapped to their respective closest node in the street network (as the crow flies), and we compute the driving distances between all population cells and treatment facilities along the street network.
The consideration set of each population cell is then defined as all treatment facilities that can be reached within a maximum driving distance $\maxDist\in\N$ in kilometers.
Thereby, we order consideration sets according to the distance between a treatment facility and the center of the population cell.
As a last step, we aggregate all population cells with identical (including order) consideration sets to obtain the set of demand origins~$\Demands$.
We note, that this aggregation and thus also the resulting set of patient demand origins $\Demands$ depends on the choice of the parameter $\maxDist\in\N$.
The non-aggregated population cells clustered according to their consideration sets for $\maxDist=\SI{6}{\kilo\meter}$ are exemplary shown in Figure~\ref{fig:instance}.

Next, we consider the steerable and unsteerable patient demands at each demand origin.
As empirical data concerning the primary care demand at each demand origin is unavailable, we have to rely on simulation to obtain rough estimates.
Specifically, we use the existing model of the considered primary care system in the hybrid agent-based simulation tool SiM-Care~\cite{comis2019patients} to obtain the number of primary care visits $\simulatedDemand{i}{\demand}\in \N$ per demand origin $\demand \in \Demands$ for every week $i\in\{1,\dots 52\}$ in a one year time horizon.
Both the simulation tool itself as well as the modeling of the considered primary care system are complex in nature and we refer to the aforementioned reference for further details.
Unsteerable patient demands, as considered here, are not representable in SiM-Care and we therefore assume that a fixed percentage $\fracUnsteerable \in [0,1]$ of the simulated primary care visits can be attributed to unsteerable patient demands.

In the deterministic setting of the $\strategicPlanningProblem$, we then choose the patient demands at each demand origin $\demand \in \Demands$ proportionately to the rounded average simulated demand $\avgSimulatedDemand{\demand}\coloneqq \round{\frac{1}{52} \sum_{i=1}^{52} \simulatedDemand{i}{\demand}}$ where $\text{round}\colon \R \to \Z$ denotes the rounding function $\round{x}\coloneqq \lfloor x + 0.5 \rfloor$. 
Specifically, we set
\begin{align*}
\unsteerableDemand_\demand = \round{\fracUnsteerable \, \avgSimulatedDemand{\demand}} \quad \text{and} \quad 
\steerableDemand_\demand = \avgSimulatedDemand{\demand} - \unsteerableDemand_\demand.
\end{align*}

In the uncertain setting of the $\robustStrategicPlanningProblem$, we choose the lower and upper bounds for the patient demands at each demand origin proportionately to the minimum and maximum simulated demands.
Formally, this translates into setting
\begin{align*}
\lowerUnsteerable_\demand = \roundFixBracket{ \fracUnsteerable\min_{1 \leq i \leq 52}\{ \simulatedDemand{i}{\demand}\}}
\quad \text{and} \quad
\lowerSteerable_\demand = \min_{1 \leq i \leq 52}\{ \simulatedDemand{i}{\demand}\} - \lowerUnsteerable_\demand
\end{align*}
for the lower bound of the unsteerable and steerable demands at $\demand\in\Demands$, and
\begin{align*}
\upperUnsteerable_\demand = \roundFixBracket{ \fracUnsteerable\max_{1 \leq i \leq 52}\{ \simulatedDemand{i}{\demand}\}} 
\quad \text{and} \quad
\upperSteerable_\demand = \max_{1 \leq i \leq 52}\{ \simulatedDemand{i}{\demand}\} - \upperUnsteerable_\demand
\end{align*}
for the respective upper bounds.

The budget parameters for the budgeted uncertainty sets are determined by the maximum simulated total demand among all weeks, i.e., we choose 
\begin{align*}
\demandBound_\indexUnsteerable = \roundFixBracket{\fracUnsteerable \max_{1\leq i \leq 52} \sum_{\demand \in \Demands} \simulatedDemand{i}{\demand}}
\quad \text{and} \quad 
\demandBound_\indexSteerable =  \bigg(\max_{1\leq i \leq 52} \sum_{\demand \in \Demands} \simulatedDemand{i}{\demand}\bigg) -\demandBound_\indexUnsteerable.
\end{align*}

For our test instances, we consider the patient demand origins obtained by choosing a maximum driving distance $\maxDist\in\{\num{6}, \num{7}, \dots, \num{11}\}$ and by varying the percentage of the unsteerable patient demands $\fracUnsteerable\in\{\num{0.2}, \num{0.25}, \dots, \num{0.45}\}$.
Table~\ref{table:instances} in Appendix~\ref{appendix:a} summarizes the characteristics of the resulting \num{36} test instances.
We remark, that the total average demand $\sum_{\demand \in \Demands} \steerableDemand_\demand {+} \unsteerableDemand_\demand$ as well as the total worst case demand $\sum_{\demand \in \Demands} \upperSteerable_\demand {+} \upperUnsteerable_\demand$  in each instance depends on the choice of $\maxDist \in \N$ as a result of the aggregation of population cells.
For our set of test instances, the total average demand is relatively robust towards changes in $\maxDist$, while the total worst case demand increases with $\maxDist$ due to the resulting larger number of demand origins $|\Demands|$.

\subsection{Implementation and computational setup}
\label{subsec:design}
In our computational study, we implemented all mathematical programs in Java using OpenJDK~\num{11}~\cite{openJDK} and the CPLEX~$12.8$ Java API~\cite{cplex}.
The CPLEX optimizer is restricted to one thread and all other CPLEX parameters are left at their default settings.
Instances of the $\strategicPlanningProblem$ are solved using formulation  $\BendersFormulation$ and instances of the  $\robustStrategicPlanningProblem$ with interval uncertainty sets are solved using formulation $\IntervalRobustBendersFormulation$.
To solve instances of the $\robustStrategicPlanningProblem$ with budgeted uncertainty sets,  we use formulation $\GammaRobustBendersFormulation$.
The separation problems in $\BendersFormulation$ and $\IntervalRobustBendersFormulation$ are solved using the LP-formulation from Appendix~\ref{appendix:separationLP}, and we integrate the separation procedure directly into the branch-and-bound scheme using lazy constraint callbacks.
The separation problem in $\GammaRobustBendersFormulation$ is solved using formulation $\SeparationProblemFormulation$ and we call the separation procedure only after the restricted master problem is solved to optimality.
Note, that the separation for $\GammaRobustBendersFormulation$ cannot be integrated into the branch-and-bound scheme as lazy constraint callbacks do not allow for the introduction of new variables.
As we cannot ensure that Assumption~\ref{ass:1} holds for our test instances, we apply the explicit enforcement from Appendix~\ref{appendix:enforcing_assumption}.
All computational experiments were performed on a cluster of machines running Ubuntu \num{18.04} with an Intel(R) Core(TM) i9-9900 CPU @ \SI{3.10}{\giga\hertz} and \SI{32}{\giga\byte} DDR$4$-Non-ECC main memory.
We restrict each individual job to one physical core and \SI{3.5}{\giga\byte} main memory.
All instances were solved to optimality (CPLEX default MIP gap tolerance $10^{-4}$) and all running times are reported in CPU seconds.

\subsection{Computational results}
\label{subsec:study}
The optimal objective values and CPU times of $\BendersFormulation$, $\GammaRobustBendersFormulation$, and $\IntervalRobustBendersFormulation$ for the \num{36} test instances are summarized in Table~\ref{table:comp} in Appendix~\ref{appendix:b}.
In the following, we discuss these results with a focus on the impact of the percentage of unsteerable patient demands $\fracUnsteerable\in[0,1]$ and the patients' maximum driving distance $\maxDist\in\N$. 
Furthermore, we investigate the \emph{price of robustness} --  a term introduced by Bertsimas and Sim~\cite{BertsimasPriceOfRobustness2004} that describes the additional cost of a robust solution compared to a non-robust solution that has to be payed for the protection against data uncertainties.
To that end, we compare the objective values of the robust solutions for $\GammaRobustBendersFormulation$ and  $\IntervalRobustBendersFormulation$ to the objective values of the nominal solutions for $\BendersFormulation$.

\begin{figure}
	\centering
	\includegraphics{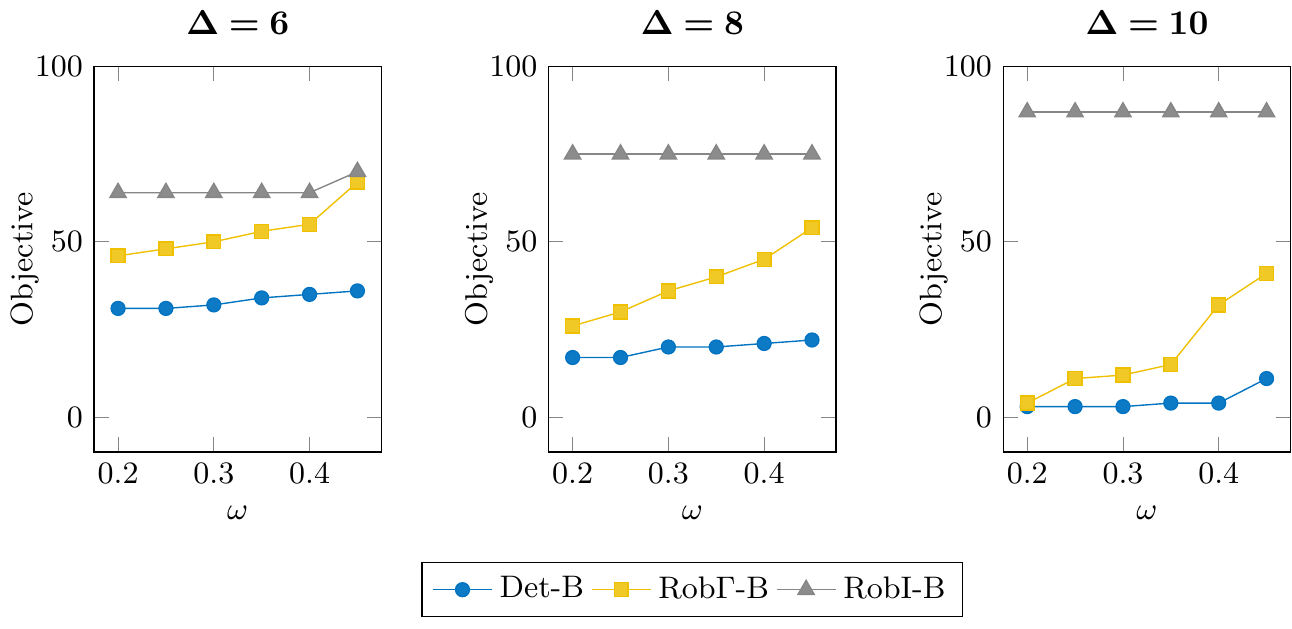}
	\caption{Objectives for fixed $\maxDist\in\{6,8,10\}$ and varying $\fracUnsteerable\in\{\num{0.2}, \dots, \num{0.45}\}$.}
	\label{fig:ObjW}
\end{figure}

Examining the impact of the percentage of unsteerable patient demands, we visualize the objective function values of $\BendersFormulation$, $\GammaRobustBendersFormulation$, and $\IntervalRobustBendersFormulation$ for exemplary fixed $\maxDist\in \N$ and varying $\fracUnsteerable\in[0,1]$ in Figure~\ref{fig:ObjW}.
From the way we modeled the unsteerable patient demands, it is our expectation that a higher percentage of unsteerable patient demands leads to a higher objective value as a result of the associated loss of control over the patient demands.
Looking at Figure~\ref{fig:ObjW}, we can confirm this expectation regardless of the fixed maximum driving distance $\maxDist\in\N$ and the considered setting.
However, we can observe large differences in the degree of this effect.
In the robust setting with interval uncertainty sets $\IntervalRobustBendersFormulation$, the objective values are mostly unaffected by the choice of $\fracUnsteerable$.
This can be attributed to the conservatism of this approach, which leads to an overloading of the existing primary care system where the sheer level of demand seems to dominate the cost of the MMU operation plan.
In the deterministic setting $\BendersFormulation$, the influence of the percentage of the unsteerable patient demands is more pronounced, yet still relatively weak.
One explanation for this behavior is that the local level of unsteerable patient demands in this setting remains at a degree which can be mostly compensated by an appropriate reassignment of the steerable patient demands.
The greatest impact of the percentage of unsteerable patient demands $\fracUnsteerable\in [0,1]$ on the cost of the MMU operation plan can be observed in the robust setting with budgeted uncertainty sets for $\GammaRobustBendersFormulation$.
This showcases, that the budgeted uncertainty sets succeed at limiting the total patient demand as opposed to interval uncertainty sets, while still accounting for local worst-cases as opposed to the deterministic setting.
Concerning the price of robustness, we can confirm that the budgeted uncertainty sets manage to substantially lower the price of robustness compared to the interval uncertainty sets.
Furthermore, we can observe that the price of robustness is lowest for small $\fracUnsteerable\in [0,1]$ and increases with the percentage of the unsteerable patient demands.

To analyze the impact of the patients' maximum driving distance, we visualize the objective function values of $\BendersFormulation$, $\GammaRobustBendersFormulation$, and $\IntervalRobustBendersFormulation$ for exemplary fixed $\fracUnsteerable\in[0,1]$ and varying $\maxDist\in \N$ in Figure~\ref{fig:ObjD}. 
\begin{figure}
\centering
\includegraphics{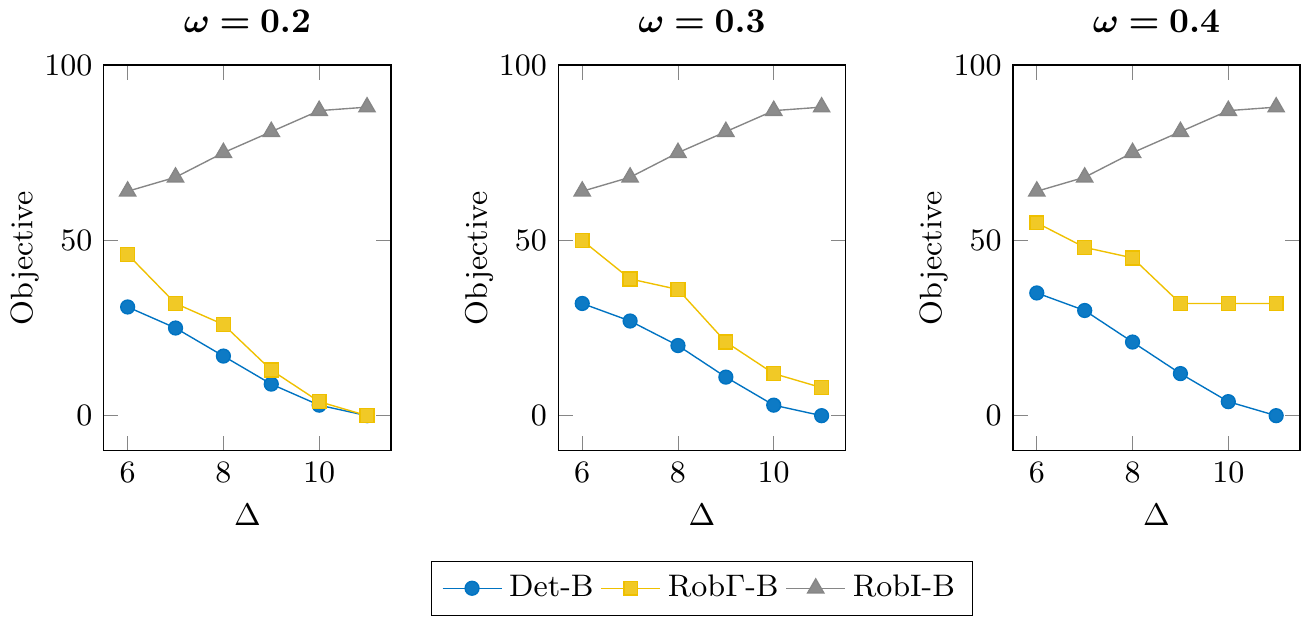}
\caption{Objectives for fixed $\fracUnsteerable\in\{\num{0.2}, \num{0.3}, \num{0.4}\}$ and varying $\maxDist\in\{6,\dots,11\}$.}
\label{fig:ObjD}
\end{figure} %
Intuitively, one would assume that a higher patients' maximum driving distance leads to a lower objective function value as a result of larger consideration sets which yield more flexibility in the assignment of steerable patient demands.
However, looking at Figure~\ref{fig:ObjD} this assumption can only be verified for $\BendersFormulation$ and $\GammaRobustBendersFormulation$. 
For the choice of interval uncertainty sets in $\IntervalRobustBendersFormulation$, we can actually observe the opposite behavior.
Although this might seem counterintuitive at first glance, we can explain this behavior by the aggregation of the population cells during the instance generation process as described in Section~\ref{subsec:instances}:
An increase in $\maxDist$ leads to more diverse consideration sets which, in turn, result in a higher number of demand origins $|\Demands|$ as well as a higher total worst case patient demand $\sum_{\demand \in \Demands} \upperSteerable_\demand {+} \upperUnsteerable_\demand$; compare Table~\ref{table:instances} in Appendix~\ref{appendix:a}.
Paired with the previous observation that the objective value of $\IntervalRobustBendersFormulation$ seems to be dominated by this worst case demand, an increase in the objective value is actually to be expected.
In $\GammaRobustBendersFormulation$, we limit the total demand in the system through the use of budgeted uncertainty sets and thus can observe that the cost of MMU operation plans are decreasing in the patients' maximum driving distance.
The extent of the savings associated with an increase in $\maxDist$, decreases as we increase the percentage of the unsteerable patient demands $\fracUnsteerable$ for both $\BendersFormulation$ and $\GammaRobustBendersFormulation$.
This makes perfect sense as an increase in $\fracUnsteerable$ necessarily results in a smaller percentage of steerable patient demands for which we can actually profit from the enlarged consideration sets.
Concerning the price of robustness, also this collation of our results validates that the use of budgeted uncertainty sets reduces the price of robustness compared to the use of interval uncertainty sets.
Moreover, the difference in the optimal solution values between $\GammaRobustBendersFormulation$ and $\IntervalRobustBendersFormulation$ increases with the patients' maximum driving distance $\maxDist\in \N$, which is partly due to the undesired increase in the total worst case demand resulting from the aggregation of population cells during instance generation.

To justify why the price of robustness should be payed, we analyze the quality of the computed MMU operation plans.
For this purpose, we reuse the SiM-Care model of the considered primary care system which we referred to in Section~\ref{subsec:instances} to generate another set of patient demands for every week in a \num{10} year time horizon.
Thereby, we do not aggregate the population cells specified by the German census such that we end up with \num{520} weekly demands for each of the \num{2754} cells.
To determine the unsteerable patient demands for each realization, we flip a biased coin where we set the success probability to the percentage of unsteerable patient demands $\fracUnsteerable\in [0,1]$.
We analyze for each MMU operation plan and each realization the minimum total number of violations of the treatment capacities.
To clarify this metric, consider the following example: If \num{220} patients are assigned to a practice $\practice \in \Practices$ with weekly treatment capacity $\capacityPractice_\practice=200$,  this yields \num{20} violations.

Figure~\ref{fig:eval} shows the minimum total number of violations obtained by $\BendersFormulation$, $\GammaRobustBendersFormulation$, and $\IntervalRobustBendersFormulation$ for each of the \num{520} realizations and exemplary parameter choices.
\begin{figure}
	\centering
	\includegraphics{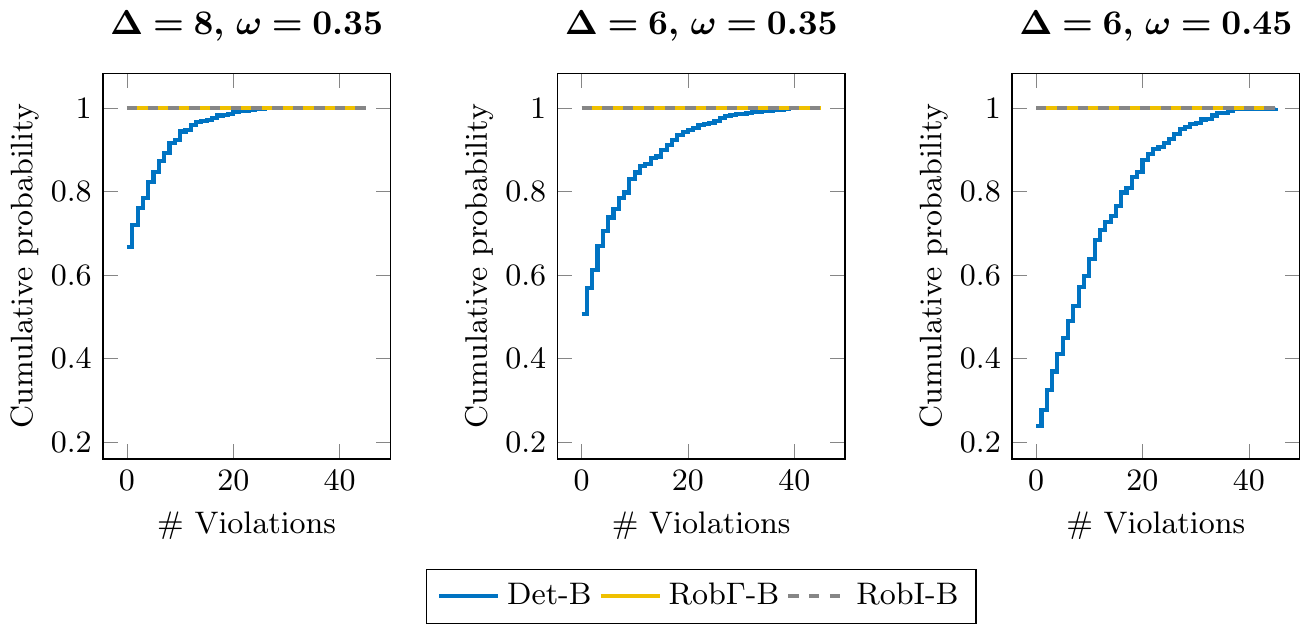}	
	\caption{Empirical distribution function of the minimum total number of violations for \num{520} realizations and parameter choices $(\maxDist, \fracUnsteerable) \in \{(8,0.35), (6,0.35), (6,0.45)\}$.}
	 
	\label{fig:eval}
\end{figure}
The first thing we want to emphasize, is that the solutions obtained using the robust models $\GammaRobustBendersFormulation$ and $\IntervalRobustBendersFormulation$ are actually feasible for all \num{520} realizations as there are no violations.
This feasibility of the robust solutions is not unique to the depicted parameter choices, but actually holds for all robust solutions we computed.
Considering the MMU operation plans obtained  using $\BendersFormulation$ and deterministic average demands, we can observe violations for quite a few realizations.
The number and extent of these violations depends on the percentage of unsteerable patient demands $\fracUnsteerable\in [0,1]$ and the patients' maximum driving distance $\maxDist \in \N$.
Specifically, the quality of the deterministic solutions deteriorates as we decrease $\maxDist$ and increase $\fracUnsteerable$ which seems reasonable as these are exactly the settings for which we observed the highest price of robustness.
Nevertheless, we must note that the highest number of violations observed over all parameter settings and realizations is \num{45}.
Setting \num{45} violations in relation to the total mean demand of roughly \num{3900}, only \SI{1}{\percent} of the demands cannot be accounted for by the deterministic solutions.

Although each violation can potentially lead to an avoidable emergency room visit, this is an admittedly good performance of the deterministic solutions.
A possible reason for this is the fact that SiM-Care does not feature an infectious model; compare~\cite{comis2019patients}.
Thus, the generated realizations do not show the local surges in demand resulting from the outbreak of an infectious disease.
To include these local demand spikes into our evaluation, we mimic infectious outbreaks in a very simplistic manner:
We select successively at random \num{5} of the \num{2754} population cells as outbreak centers and double the demands of each population cell within a \SI{1}{\km} radius of the outbreak center (as the crow flies).
Figure~\ref{fig:eval_infec} shows the minimum total number of violations obtained by $\BendersFormulation$, $\GammaRobustBendersFormulation$, and $\IntervalRobustBendersFormulation$ for each of the \num{520} realizations under the presence of infections outbreaks and exemplary parameter choices.
\begin{figure}
	\centering
	\includegraphics{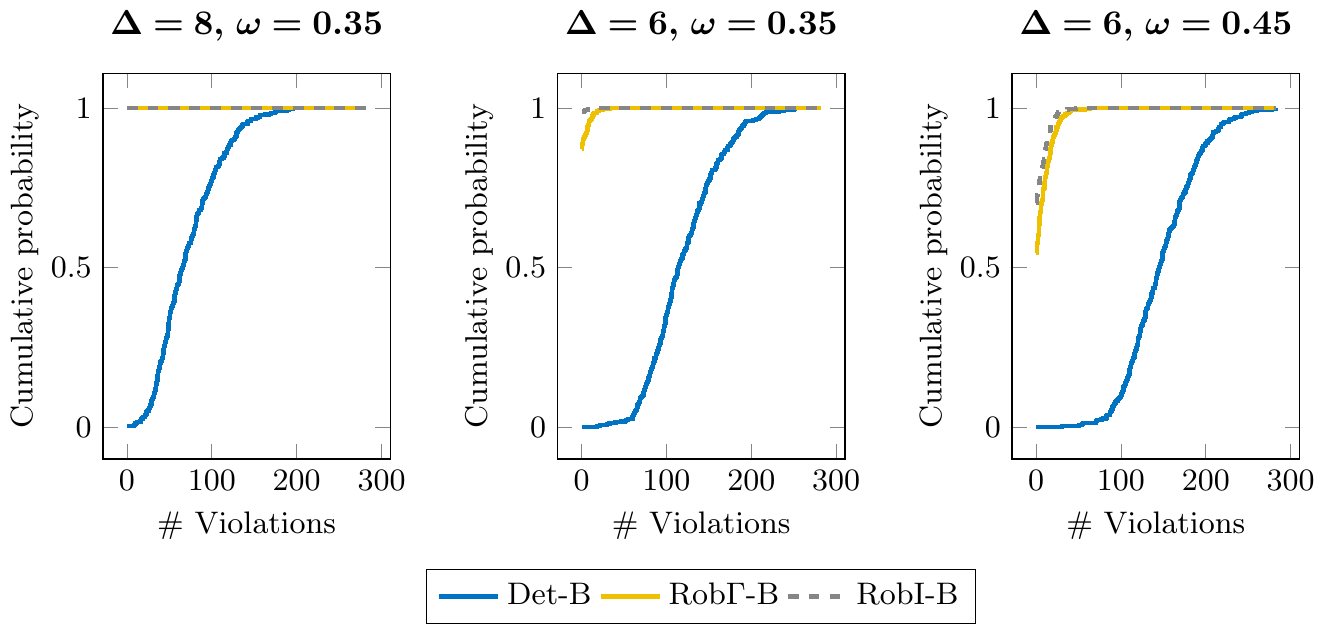}	
	\caption{Empirical distribution function of the minimum total number of violations for \num{520} realizations with \num{5} infectious outbreaks and $(\maxDist, \fracUnsteerable) \in \{(8,0.35), (6,0.35), (6,0.45)\}$.}
	\label{fig:eval_infec}
\end{figure}
Looking at the results, we can observe that the local surges in demand lead to more violations in all solutions.
Evidently, the deterministic solutions perform worst, while the two robust approaches produce solutions of comparable quality.

While running times are clearly not a focus of this study, we note that all instances of $\BendersFormulation$ were solved within $\num{10}$ CPU seconds which is sufficiently quick for a planning problem at a strategic level and leaves room to consider even larger primary care systems.
The instances of $\GammaRobustBendersFormulation$, which are probably the most interesting ones from an application point of view, are noticeably more challenging than the instances of $\BendersFormulation$.
However, even the hardest instance solved within $\num{139}$ CPU seconds which is more than reasonable for this kind of strategic application.
Formulation $\IntervalRobustBendersFormulation$ is undoubtedly the most challenging among all considered formulations.
Especially for higher values of $\maxDist\in \N$, CPLEX struggles to close the MIP gap which results in a tailing-off phenomenon and running times of up to $\num{12925}$ CPU seconds.
Although this is considerably longer than the running times of the other formulations, even running times of this magnitude can still be deemed acceptable for a strategic planning problem. 
Moreover, we note that the running times for the $\robustStrategicPlanningProblem$ with interval uncertainty sets and high values of $\maxDist\in \N$ can be substantially improved by using a variation of formulation $\OriginalFormulation$ instead of $\IntervalRobustBendersFormulation$ that we decided to omit as we consider this setting for reference rather than as a serious alternative for real-world application.

\section{Discussion and conclusion}
\label{sec:conclusion}
In this paper, we studied the strategic planning problem for MMUs as a capacitated set covering problem.
As a new modeling concept, we considered existing infrastructure in the form of practices and both steerable and unsteerable patient demands.
While steerable patient demands can be assigned to any acceptable treatment facility, unsteerable demands will always visit the closest available treatment facility.
The interplay of the two types of demands brings a new aspect to location planning that has, to the best of our knowledge, not yet been considered in the literature.
We formulated the problem as a compact integer linear program and showed how this formulation can be solved by Benders decomposition and constraint generation.
Recognizing the importance of uncertainties in health care planning, we extended the problem to uncertain patient demands.
Using methods from robust optimization, we devised exact solution methods based on constraint generation for interval and budgeted uncertainty sets. 
Since all these models consider patient demands and treatment capacities in a session-aggregated form that artificially smooths both out over the week, we subsequently introduced a session-specific version of the problem and showed that all our results transfer to it.
Finally, we conducted an extensive computational study in a real-world inspired setting.

Our study shows that the formulations presented in this paper enable us to compute optimized strategic MMU operation plans for real-world sized instances in an acceptable time frame.
The cost of the MMU operation and thus the cost of the provision of primary care significantly depends on three factors:
the percentage of unsteerable patient demands, the modeled consideration sets, and the handling of data uncertainties.
Thus, a key finding of this paper is the insight that these three factors can have a major impact on the resulting MMU operation plans and should thus be taken into account for strategic MMU operation planning.
From our computational experiments, we infer that $\GammaRobustBendersFormulation$ may be the most suitable formulation for this purpose. 
The formulation allows for the consideration of demand uncertainties while limiting the conservatism of solutions through the use of budgeted uncertainty sets. 
In addition, it is possible to trade off operational cost against robustness towards demand uncertainties by adjusting the budget parameters $\demandBound_\indexSteerable$ and $\demandBound_\indexUnsteerable$. 

While this represents a major step forward for strategic MMU operation planning, we must not overlook the major limitations of our models and study that stem from our assumptions and open up directions for further research.
Concerning the limitations of our computational study, let us first note that the assessment of a physician's treatment capacity is a very delicate and personal matter that is beyond our field of expertise.
So while our estimates may not be completely unrealistic, we do not make any claims of correctness and would definitely recommend surveying each physician individually.  
Similar limitations apply to the patient demand origins and corresponding patient demands, where the lack of empirical data forced us to use a simulation model which can provide rough estimates at best.
Moreover, we have seen that the aggregation of patients to demand origins is highly non-trivial and may result in undesired behaviors such as an increase in the total worst-case patient demand.
With regard to model limitations, we want to note that our assumption that each demand origin's unsteerable patient demands target the same treatment facility is quite strict and definitely not true in reality.
To overcome this limitation, future work should investigate whether this assumption can be weakened, e.g., by assuming that the unsteerable patient demands target the three closest facilities in some fixed ratio.
Furthermore, we assume that the patient demands at the demand origins are independent which is questionable in practice to say the least.
While it could be difficult to remove this assumption entirely from our models, one step into this direction could start by considering steerable and unsteerable patient demands as being dependent.
In line with this goal, we could model one joint uncertainty set for the steerable and unsteerable patient demands instead of two separate ones to incorporate their dependencies into our models.

Summing up, we are confident that our models produce strategic MMU operation plans that can serve as a sound basis for an actual real-world implementation. 
That being said, we strongly recommend an expert validation of all plans prior to their implementation due to the discussed limitations.

\section{Acknowledgments}
This work was supported by the Freigeist-Fellowship of the Volkswagen Stiftung; the German research council (DFG) Research Training Group 2236 UnRAVeL; and the German Federal Ministry of Education and Research (grants no.~05M16UKC, 05M16PAA) within the project ``HealthFaCT - Health: Facility Location, Covering and Transport''.
\bibliography{MyCollection}
\bibliographystyle{model5-names}
\begin{appendices}
	\section{Correctness of formulation \texorpdfstring{$\OriginalFormulation$}{(Det)}}	
	\label{appendix:detcorrect}
	We show that the integer linear program $\OriginalFormulation$ presented in Section~\ref{sec:prob_def} is indeed a formulation for the \mbox{$\strategicPlanningProblem$}, i.e., we formally prove Theorem~\ref{thm_first_formulation}.
	To that end, we prove that there always exists an optimal solution $(y,x,z,\varUnsteerableDemand)$ to $\OriginalFormulation$ in which all unsteerable demands originating in $\demand \in \Demands$ target the closest operated treatment facility $\closestFacility{y}{\demand}\coloneqq\arg\min_{k\in \ConsiderationSet(\demand): k \in \Practices \lor (k\in \Locations \land y_k=1)} \dist(\demand,k)$ in the consideration set.
	
	\begin{lemma}
		\label{lem:feasible_1}
		Given a feasible solution $(y,x,z,\varUnsteerableDemand)$ to 
		$\OriginalFormulation$, 
		we can compute a feasible solution $(y,x,z,\varUnsteerableDemand^\prime)$ 
		to $\OriginalFormulation$ with the same objective value
		such that for every demand origin $\demand \in \Demands$ there is at most 
		one treatment facility $k\in \ConsiderationSet(\demand)$ with 
		$\varUnsteerableDemand^\prime_{\demand k} = 1$, i.e., $\sum_{k \in 
			\ConsiderationSet(\demand)} \varUnsteerableDemand'_{\demand k} \leq 1$, 
		in linear time.
	\end{lemma}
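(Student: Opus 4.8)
The plan is to modify only the unsteerable-assignment indicators $\varUnsteerableDemand$, leaving $y$, $x$, and $z$ untouched. Since the objective \eqref{1} depends solely on $y$ and $x$, this preserves the objective value automatically, so the entire task reduces to a feasibility argument. Concretely, for every demand origin $\demand\in\Demands$ I would route all unsteerable demand to its closest operating facility $\closestFacility{y}{\demand}$ and set every other indicator to zero; write $k^\ast\coloneqq\closestFacility{y}{\demand}$.

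The first step is to observe that constraints \eqref{6}--\eqref{9} already force $\varUnsteerableDemand_{\demand k^\ast}=1$ in the given feasible solution. By definition of $k^\ast$, every facility preceding it in the order $\Order{\demand}$ must be a non-operating location (a closer practice would itself be operating and contradict minimality), so constraints \eqref{7} pin all of those indicators to zero. Evaluating \eqref{8} if $k^\ast\in\Locations$, or \eqref{9} if $k^\ast\in\Practices$, at $k^\ast$ then gives a right-hand side of $1$, because the truncated sum over strictly closer facilities vanishes; this forces $\varUnsteerableDemand_{\demand k^\ast}\geq 1$ and hence $\varUnsteerableDemand_{\demand k^\ast}=1$.

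Next I would set $\varUnsteerableDemand'_{\demand k}\coloneqq 1$ if $k=k^\ast$ and $\varUnsteerableDemand'_{\demand k}\coloneqq 0$ otherwise, and verify feasibility. By the first step $\varUnsteerableDemand'\leq\varUnsteerableDemand$ holds componentwise, so the capacity constraints \eqref{4}--\eqref{5} remain satisfied with the original $z$, since each $\unsteerableDemand_\demand\geq 0$ can only lower the left-hand sides. Constraint \eqref{6} holds as $\varUnsteerableDemand'_{\demand k^\ast}=1$, and \eqref{7} holds because $k^\ast$ is operating. For \eqref{8}--\eqref{9} I would split on the position of $k$ relative to $k^\ast$ in $\Order{\demand}$: for $k$ strictly before $k^\ast$ both sides equal zero; for $k=k^\ast$ the right-hand side is exactly $1=\varUnsteerableDemand'_{\demand k^\ast}$; and for $k$ strictly after $k^\ast$ the truncated sum already contains $\varUnsteerableDemand'_{\demand k^\ast}=1$, making the right-hand side nonpositive and thus dominated by $\varUnsteerableDemand'_{\demand k}\geq 0$.

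Finally, the construction treats each origin independently and needs only a single scan of each ordered consideration set to locate $k^\ast$, which gives the claimed linear running time. The one point demanding care is the forcing argument: I must confirm that the facilities strictly below $k^\ast$ in $\Order{\demand}$ are precisely the non-operating locations, so that the truncated sums in \eqref{8}--\eqref{9} genuinely vanish and the closest operating facility is activated. Everything else reduces to the monotonicity observation $\varUnsteerableDemand'\leq\varUnsteerableDemand$ together with the routine case analysis above.
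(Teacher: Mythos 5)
Your proof is correct, but it takes a different route from the paper's. The paper defines $\varUnsteerableDemand'$ purely in terms of the given $\varUnsteerableDemand$: for each origin $\demand$ it keeps only the \emph{first} facility in the order $\Order{\demand}$ with $\varUnsteerableDemand_{\demand k}=1$ and zeroes out all later indicators; it never identifies the closest operating facility, and it verifies constraints \eqref{8}--\eqref{9} by contradiction (if some constraint were violated by $\varUnsteerableDemand'$, the corresponding prefix sum of $\varUnsteerableDemand$ would have to be positive, and the preserved first ``one'' makes the prefix sum of $\varUnsteerableDemand'$ positive too). You instead first prove the structural fact that feasibility \emph{forces} $\varUnsteerableDemand_{\demand,\closestFacility{y}{\demand}}=1$ -- using \eqref{7} to kill the indicators of the non-operating locations preceding $\closestFacility{y}{\demand}$ and then reading off \eqref{8} or \eqref{9} at $\closestFacility{y}{\demand}$ -- and only then define $\varUnsteerableDemand'$ as the indicator vector of $\closestFacility{y}{\demand}$, verifying every constraint directly by a three-way case split on position. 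The two constructions actually output the same $\varUnsteerableDemand'$ (by \eqref{7}, the first ``one'' in the order sits exactly at $\closestFacility{y}{\demand}$), so the difference is in the decomposition of the argument: the paper's version is leaner for this lemma alone and postpones the characterization of where the ones must sit to Lemma~\ref{lem:feasible_2}, whereas your forcing step is precisely the forward direction of that subsequent lemma, so your route essentially merges the two results and replaces the contradiction argument with a constructive verification -- at the modest cost of having to argue separately that no practice and no operating site can precede $\closestFacility{y}{\demand}$ in the order. Both arguments run in linear time as claimed.
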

	\begin{proof}
		Given a feasible solution $(y,x,z,\varUnsteerableDemand)$ to 
		$\OriginalFormulation$, we set 
		
		\begin{align*}
		\varUnsteerableDemand^\prime_{\demand k} = \begin{cases}
		0 &\text{if}\quad \exists k'\in \ConsiderationSet(\demand) : \invorder{\demand}{k'} <  \invorder{\demand}{k} \land \varUnsteerableDemand_{\demand k^\prime}=1,\\
		\varUnsteerableDemand_{\demand k} & \text{else}.
		\end{cases}
		\end{align*}
		Clearly, $\varUnsteerableDemand'$ satisfies $\sum_{k \in 
			\ConsiderationSet(\demand)} \varUnsteerableDemand'_{\demand k} \leq 1$ and  can be computed in linear time.
		It holds that $\varUnsteerableDemand' \leq \varUnsteerableDemand$ and thus $(y,x,z,\varUnsteerableDemand')$ satisfies inequalities \eqref{4} and \eqref{5}.
		Moreover, the solution $(y,x,z,\varUnsteerableDemand')$ obviously satisfies constraints \eqref{6} and \eqref{7}. 
		Concerning constraints \eqref{8}, assume there exist $\demand \in \Demands$ and $k \in \ConsiderationSet_{\Locations}(\demand) $ such that 
		\begin{align}
		&\varUnsteerableDemand^\prime_{\demand k} < y_k - 
		\sum_{i=1}^{\invorder{\demand}{k} -1 } \varUnsteerableDemand'_{\demand,\order{\demand}{i}} \\
		\Leftrightarrow \;& y_k = 1 \land \sum_{i=1}^{\invorder{\demand}{k} } \varUnsteerableDemand'_{\demand,\order{\demand}{i}}=0.
		\label{assumption}
		\end{align}
		We can conclude from $y_k = 1$ and the feasibility of $(y,x,z,\varUnsteerableDemand)$ that $\sum_{i=1}^{\invorder{\demand}{k}} \varUnsteerableDemand_{\demand,\order{\demand}{i}}>0$, which necessitates that $\sum_{i=1}^{\invorder{\demand}{k} } \varUnsteerableDemand'_{\demand,\order{\demand}{i}}>0$.
		But this is a contradiction to assumption~\eqref{assumption} and completes the proof that $(y,x,z,\varUnsteerableDemand')$ satisfies constraints \eqref{8}.
		The validity of inequalities \eqref{9} can be shown analogously.
		Hence, $(y,x,z,\varUnsteerableDemand')$ is a feasible solution with the same 
		objective value as $(y,x,z,\varUnsteerableDemand)$.
	\end{proof}
	
	Based on this insight, we can now show that if $\OriginalFormulation$ is feasible, there always exists an optimal solution in which all unsteerable patient demands target their closest operated treatment facility.
	
	\begin{lemma}
		\label{lem:feasible_2}
		Let $(y,x,z,w)$ be a feasible solution to $\OriginalFormulation$ with $\sum_{k \in 
			\ConsiderationSet(\demand)} \varUnsteerableDemand_{\demand k} \leq 1$ for every 
		demand origin $\demand\in\Demands$. Then for all $\demand\in\Demands$ and $k\in \ConsiderationSet(\demand)$,
		$\varUnsteerableDemand_{\demand k}=1$ if and only if $k$ is $\demand$'s closest 
		operating treatment facility, i.e., $k=\closestFacility{y}{\demand}$.
	\end{lemma}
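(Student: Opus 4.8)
The plan is to first pin down the support of $\varUnsteerableDemand$ for a fixed $\demand\in\Demands$ and then identify which single facility carries the unit. Combining the hypothesis $\sum_{k\in\ConsiderationSet(\demand)}\varUnsteerableDemand_{\demand k}\leq 1$ with the model constraint \eqref{6}, which reads $\sum_{k\in\ConsiderationSet(\demand)}\varUnsteerableDemand_{\demand k}\geq 1$, and using $\varUnsteerableDemand_{\demand k}\in\{0,1\}$, there is exactly one facility $k^{*}\in\ConsiderationSet(\demand)$ with $\varUnsteerableDemand_{\demand k^{*}}=1$. Hence both directions of the claimed equivalence reduce to the single statement $k^{*}=\closestFacility{y}{\demand}$, which I would establish for each $\demand$ separately; note that feasibility guarantees $\closestFacility{y}{\demand}$ is well defined, since by \eqref{7} the facility $k^{*}$ is itself operating.

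Write $j:=\closestFacility{y}{\demand}$ for the closest operating facility and let $r:=\invorder{\demand}{j}$ denote its rank in the distance-ordering $\Order{\demand}$. The key observation is that every facility ranked strictly before $j$ must be a \emph{non-operating location}: since practices are always operating, no practice can precede the closest operating facility, so each $\order{\demand}{i}$ with $i<r$ lies in $\ConsiderationSet_\Locations(\demand)$ and satisfies $y_{\order{\demand}{i}}=0$. Feeding this into constraint \eqref{7}, namely $\varUnsteerableDemand_{\demand\location}\leq y_\location$, yields $\varUnsteerableDemand_{\demand,\order{\demand}{i}}=0$ for all $i<r$, and therefore $\sum_{i=1}^{r-1}\varUnsteerableDemand_{\demand,\order{\demand}{i}}=0$.

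It then remains to force the unit onto $j$ via the lower-bound constraints, splitting into the two cases distinguished by the model. If $j\in\Locations$, then $j$ being operating means $y_j=1$, and constraint \eqref{8} gives $\varUnsteerableDemand_{\demand j}\geq y_j-\sum_{i=1}^{r-1}\varUnsteerableDemand_{\demand,\order{\demand}{i}}=1$; if $j\in\Practices$, constraint \eqref{9} gives $\varUnsteerableDemand_{\demand j}\geq 1-\sum_{i=1}^{r-1}\varUnsteerableDemand_{\demand,\order{\demand}{i}}=1$. In either case $\varUnsteerableDemand_{\demand j}=1$, so by uniqueness $j=k^{*}=\closestFacility{y}{\demand}$, and the equivalence follows. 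The only point requiring genuine care is the distance-ordering bookkeeping underlying the key observation: I would state explicitly that $\closestFacility{y}{\demand}$ is taken to be the operating facility of smallest rank under $\Order{\demand}$, so that ties in $\dist(\demand,\cdot)$ are resolved consistently with the ordering used in constraints \eqref{8}--\eqref{9}; everything else is a direct substitution into the constraints.
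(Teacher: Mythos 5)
Your proof is correct and follows essentially the same route as the paper's: use constraint \eqref{7} to force $\varUnsteerableDemand_{\demand,\order{\demand}{i}}=0$ for all facilities ranked before the closest operating one (which must be unoperated MMU sites, as practices always operate), then invoke \eqref{8} or \eqref{9} to force $\varUnsteerableDemand_{\demand j}=1$ at $j=\closestFacility{y}{\demand}$, with the hypothesis $\sum_k \varUnsteerableDemand_{\demand k}\leq 1$ supplying uniqueness. Your only deviations are organizational — establishing existence and uniqueness of $k^*$ upfront via \eqref{6} rather than arguing the converse by contradiction, and making the tie-breaking convention for $\closestFacility{y}{\demand}$ explicit — both of which are harmless refinements of the same argument.
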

	\begin{proof}
		Let $\demand\in\Demands$ be a demand origin with closest operating treatment facility $\closestFacility{y}{\demand}=\order{\demand}{i} \in \ConsiderationSet(\demand)$, $i \in \{1, \dots, |\ConsiderationSet(\demand)|\}$.
		As all facilities $\order{\demand}{j}$ for $j < i$ are unoperated MMU sites by the definition of $i$, i.e., 
		$\order{\demand}{j} \in \ConsiderationSet_\Locations(\demand)$ with $y_{\order{\demand}{j}}=0$, we get that
		$\varUnsteerableDemand_{\demand \order{\demand}{j}}=0$ for all $j <i$ by \eqref{7}.
		The feasibility of $(y,x,z,w)$ now yields 
		\begin{align*}
		\varUnsteerableDemand_{\demand, \closestFacility{y}{\demand}} \geq 1 - \sum_{j=1}^{i -1 } \varUnsteerableDemand_{\demand,\order{\demand}{j}} = 1.
		\end{align*}
		Conversely, let $\varUnsteerableDemand_{\demand k} = 1$ for some $\demand\in\Demands$ and some $k\in \ConsiderationSet(\demand)$. 
		Assume $k$ is not the closest operating treatment facility to $v$, i.e., for the closest operated treatment facility $\closestFacility{y}{\demand}= \order{\demand}{i} \in \ConsiderationSet(\demand)$ holds $i < \invorder{\demand}{k}$.
		As we have $\sum_{j=1}^{\ConsiderationSet(\demand)} \varUnsteerableDemand_{\demand, \order{\demand}{j}} \leq 1$, it directly follows that $\varUnsteerableDemand_{\demand \order{\demand}{j}} = 0$ for all $j \neq  \invorder{\demand}{k}$.
		However, this implies that 
		\begin{align*}
		\varUnsteerableDemand_{\demand, \closestFacility{y}{\demand}} = 0 < 1 = 1 - \sum_{j=1}^{i -1 } \varUnsteerableDemand_{\demand,\order{\demand}{j}},
		\end{align*}
		which yields a violation of \eqref{8} or \eqref{9} for $v$ and $\closestFacility{y}{\demand}$, which in turn is a contradiction to the feasibility of $(y,x,z,w)$.
	\end{proof}
	
	Concerning the steerable patient demand, we have to show that if $\OriginalFormulation$ is feasible, there always exists an optimal solution in which no more than the steerable patient demand $\steerableDemand_\demand \in \N$ originates in each demand origin $\demand \in \Demands$.
	\begin{lemma}
		Given a feasible solution $(y,x,z,\varUnsteerableDemand)$ to 
		$\OriginalFormulation$, 
		we can compute a feasible solution $(y,x,z^\prime,\varUnsteerableDemand)$ 
		to $\OriginalFormulation$ with the same objective value and $\sum_{k\in 
			\ConsiderationSet(\demand)} z^\prime_{\demand k} = 
		\steerableDemand_\demand$ for all $\demand \in \Demands$ in linear time.
		\label{lem:steerable_demand}
	\end{lemma}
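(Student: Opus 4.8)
The plan is to exploit two structural facts about $\OriginalFormulation$. First, the objective \eqref{1} depends only on $y$ and $x$, so \emph{any} change to the steerable assignment $z$ that preserves feasibility automatically preserves the objective value. Second, among all the constraints only \eqref{3} forces $z$ from below ($\sum_{k\in\ConsiderationSet(\demand)} z_{\demand k}\geq\steerableDemand_\demand$), whereas in the capacity constraints \eqref{4}--\eqref{5} the variables $z_{\demand k}$ appear with nonnegative coefficients on the left-hand side of a ``$\leq$'' inequality. Hence lowering any $z_{\demand k}$ can only help \eqref{4}--\eqref{5}; the sole danger is undershooting \eqref{3}. The whole task thus reduces to trimming, at each demand origin $\demand\in\Demands$, the excess $E_\demand:=\sum_{k\in\ConsiderationSet(\demand)} z_{\demand k}-\steerableDemand_\demand\geq 0$ down to zero.

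First I would fix $\demand$, order its consideration set as $k_1,\dots,k_m$, and greedily absorb the excess: maintaining a residual budget $R$ initialized to $E_\demand$, at each $k_i$ I set $z'_{\demand k_i}:=z_{\demand k_i}-\min\{z_{\demand k_i},R\}$ and decrement $R$ by that same amount. Because $\sum_{i} z_{\demand k_i}=\steerableDemand_\demand+E_\demand\geq E_\demand$, the budget is fully exhausted, so $\sum_{k\in\ConsiderationSet(\demand)} z'_{\demand k}=\steerableDemand_\demand$ holds exactly. Since $z_{\demand k}$ and $\steerableDemand_\demand$ are integral, every reduction $\min\{z_{\demand k_i},R\}$ is a nonnegative integer no larger than $z_{\demand k_i}$, so each $z'_{\demand k_i}$ remains in $\N$.

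Next I would verify feasibility of $(y,x,z',\varUnsteerableDemand)$: constraint \eqref{3} now holds with equality by construction; \eqref{4} and \eqref{5} continue to hold because $z'\leq z$ pointwise only shrinks their left-hand sides while the $\varUnsteerableDemand$-terms and right-hand sides are untouched; and \eqref{6}--\eqref{9} do not involve $z$ at all. The objective coincides with that of $(y,x,z,\varUnsteerableDemand)$ by the first observation, and since each variable $z_{\demand k}$ is touched exactly once the construction runs in time linear in $\sum_{\demand\in\Demands}|\ConsiderationSet(\demand)|$.

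I do not anticipate a genuine obstacle here; the lemma is fundamentally a monotonicity statement, namely that feasibility is preserved when $z$ is lowered to meet \eqref{3} with equality. The only point requiring real care is integrality — the trimmed solution must remain in $\N$ — and the greedy $\min$-based reduction together with the integrality of $\steerableDemand_\demand$ and $z$ settles exactly this. This supplies the last ingredient needed to assemble the proof of Theorem~\ref{thm_first_formulation}.
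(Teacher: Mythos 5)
Your proposal is correct and takes essentially the same approach as the paper: the paper's proof also trims the excess $d_v^+ = \sum_{k} z_{\demand k} - \steerableDemand_\demand$ at each demand origin by a greedy, order-based reduction (written there as a single closed-form $\min/\max$ expression that is exactly your residual-budget update), relying on the same monotonicity observation that lowering $z$ can only help the capacity constraints. Your write-up is in fact slightly more explicit about integrality, the untouched constraints, and the linear running time, all of which the paper leaves implicit.
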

	\begin{proof}
		If for some $\demand \in \Demands$ it holds that $d_v^+ \coloneqq \sum_{k\in \ConsiderationSet(\demand)} z_{\demand k} - \steerableDemand_\demand > 0$, we can arbitrarily reduce the assigned steerable patient demands by $d_v^+$ without violating constraints \eqref{4} or \eqref{5}, e.g., by setting
		\begin{align*}
		z_{\demand k}^\prime \coloneqq z_{\demand k} - \min\left\lbrace z_{\demand k},\, \max \left\lbrace 0,\, \steerableDemand^+_\demand - \sum_{i=1}^{\invorder{\demand}{k}-1} z_{\demand \order{\demand}{i}}\right\rbrace \right\rbrace. 
		\end{align*}
	\end{proof}
	
	The correctness of $\OriginalFormulation$ for the $\strategicPlanningProblem$ is now immediate.
	
	\DetCor*
	\begin{proof}
		Given an optimal solution $(y,x,z,w)$ to $\OriginalFormulation$, a strategic MMU operation plan $\MMUoperation$ of minimum cost can be defined via $\MMUoperation_\location \coloneqq x_\location$ for all $\location \in \Locations$.
		By Lemma~\ref{lem:steerable_demand}, we can assume w.l.o.g.~that $\sum_{k\in \ConsiderationSet(\demand)} z_{\demand k} = \steerableDemand_\demand$ and thus $\{\assignmentSteerablePatientDemand{\demand}\}_{\demand \in \Demands}$ defined via $\assignmentSteerablePatientDemand{\demand}(k)\coloneqq z_{\demand k}$ for all $k\in \ConsiderationSet(\demand)$ induces an assignment of the steerable patient demands.
		The feasibility of $\MMUoperation$ now follows directly from Lemmata~\ref{lem:feasible_1} and~\ref{lem:feasible_2} for the assignment $\{\assignmentSteerablePatientDemand{\demand}\}_{\demand \in \Demands}$.	
	\end{proof}
	
	\section{Enforcement of Assumption~\ref{ass:1}}
	\label{appendix:enforcing_assumption}
	In this paper, we have exclusively worked under Assumption~\ref{ass:1}, i.e., that the residual treatment capacities $\helper_k$ for $k\in \Locations \cup \Practices$ are non-negative.
	However, as pointed out in Section~\ref{sec:prob_def}, this does not hold in general which would invalidate our results.
	Therefore, we have to explicitly enforce  Assumption~\ref{ass:1} in the master problem $\MasterProblem$ and thus also in the Benders formulation $\BendersFormulation$ through the following set of constraints:
	\begin{align} 
	&\sum_{\demand\in\ConsiderationSet(\location)}\unsteerableDemand_\demand\, \varUnsteerableDemand_{\demand 
		\location} \leq 	\capacitySession\, x_\location & \forall \location\in\Locations \label{detAss1}\\
	&\sum_{\demand\in\ConsiderationSet(\practice)}\unsteerableDemand_\demand\,
	\varUnsteerableDemand_{\demand 
		\practice} \leq \capacityPractice_\practice  & \forall \practice\in\Practices. \label{detAss2}
\intertext{	
	\indent As we start to model patient demands as random variables and consider robust formulations of the $\strategicPlanningProblem$, the definition of the residual treatment capacities for fixed first-stage decisions $\hat{\varUnsteerableDemand}$ and $\hat{x}$ has to be adjusted.
	That is, we define
}		
	\helper_\location&\coloneqq\capacitySession\, \hat{x}_\location - \max_{\scenUnsteerable \in \Uncertainty_\indexUnsteerable} 
	\sum_{\demand\in\ConsiderationSet(\location)}\scenUnsteerable_\demand\, \hat{\varUnsteerableDemand}_{\demand 
		\location} & \forall \location\in\Locations, \notag\\
	\helper_\practice&\coloneqq\capacityPractice_\practice 
	-\max_{\scenUnsteerable \in \Uncertainty_\indexUnsteerable}\sum_{\demand\in\ConsiderationSet(\practice)}\scenUnsteerable_\demand\,
	\hat{\varUnsteerableDemand}_{\demand 
		\practice} & \forall \practice\in\Practices. \notag
\intertext{
	To enforce Assumption~\ref{ass:1} in $\RobustBendersFormulation$, we thus have to consider the robust counterparts of inequalities \eqref{detAss1} and \eqref{detAss2} given by
}	 
		\max_{\scenUnsteerable \in \Uncertainty_\indexUnsteerable}&\sum_{\demand\in\ConsiderationSet(\location)}\scenUnsteerable_\demand\, \varUnsteerableDemand_{\demand 
			\location} \leq 	\capacitySession\, x_\location & \forall \location\in\Locations \label{robAss1}\\
		\max_{\scenUnsteerable \in \Uncertainty_\indexUnsteerable}&\sum_{\demand\in\ConsiderationSet(\practice)}\scenUnsteerable_\demand\,
		\varUnsteerableDemand_{\demand 
			\practice} \leq \capacityPractice_\practice  & \forall \practice\in\Practices.\label{robAss2}
		\end{align}
Inequalities $\eqref{robAss1}$ and $\eqref{robAss2}$ are non-linear in general, and have to be reformulated in a linear way for each specific choice of the consideration set $\Uncertainty_\indexUnsteerable$.

For interval scenarios, i.e., $\Uncertainty_\indexUnsteerable= \AllScenUnsteerable$, this is relatively straight forward as the unsteerable patient demands at each demand origin assume their upper bound which yields	
			\begin{align} 
			&\sum_{\demand\in\ConsiderationSet(\location)}\upperUnsteerable_\demand\, \varUnsteerableDemand_{\demand 
				\location} \leq 	\capacitySession\, x_\location & \forall \location\in\Locations\\
			&\sum_{\demand\in\ConsiderationSet(\practice)}\upperUnsteerable_\demand\,
			\varUnsteerableDemand_{\demand 
				\practice} \leq \capacityPractice_\practice  & \forall \practice\in\Practices.
			\end{align}

For budgeted uncertainty sets, i.e., $\Uncertainty_\indexUnsteerable= \BudgetedUncertainty_\indexUnsteerable$, things get slightly more complicated although we can essentially mimic our approach from Section~\ref{sec:uncertainty}.
That is, we formulate 
\begin{align*}
	\max_{\scenUnsteerable \in \BudgetedUncertainty_\indexUnsteerable}&\sum_{\demand\in\ConsiderationSet(k)}\scenUnsteerable_\demand\, \hat{\varUnsteerableDemand}_{\demand 
		\location}
\end{align*}
for fixed $k\in \Locations \cup \Practices$ and fixed $\hat{\varUnsteerableDemand}_{\demand k}\in \{0,1\}$ for all $\demand \in \Demands$ and $k \in \ConsiderationSet(\demand)$ via the following linear program:
			
			\begin{maxi*}|s|[2]
				{\scenUnsteerable}
				{\sum_{\demand \in N(k)} \scenUnsteerable_\demand \,\hat{\varUnsteerableDemand}_{\demand k}}
				{}
				{\LPPrimalAssumption{\hat{\varUnsteerableDemand}}\quad}
				\addConstraint{\scenUnsteerable_\demand}{\leq \upperUnsteerable_\demand}{\quad \forall \demand \in \Demands }
				\addConstraint{-\scenUnsteerable_\demand}{\leq -\lowerUnsteerable_\demand}{\quad \forall \demand \in \Demands }
				\addConstraint{\sum_{\demand \in \Demands} \scenUnsteerable_\demand}{\leq \demandBound_\indexUnsteerable}{ }
				\addConstraint{\scenUnsteerable_\demand}{\geq 0}{\quad\forall \demand \in \Demands.}
			\end{maxi*}
The dual problem of $\LPPrimalAssumption{\hat{\varUnsteerableDemand}}$ with identical optimal solution value is then given by
			\begin{mini*}|s|[2]
				{\dualBudgetCap, \kappa, \rho}
				{\sum_{\demand \in \Demands} \left(\upperUnsteerable_\demand \dualBudgetCap_\demand - \lowerUnsteerable_\demand \kappa_\demand \right) + \demandBound_\indexUnsteerable \rho}
				{}
				{\LPDualAssumption{\hat{\varUnsteerableDemand}}\quad}
				\addConstraint{\dualBudgetCap_\demand - \kappa_\demand + \rho}{\geq \hat{\varUnsteerableDemand}_{\demand k}}{\quad \forall \demand \in \Demands}
				\addConstraint{\dualBudgetCap_\demand, \kappa_\demand, \rho}{\geq 0}{ \quad \forall \demand \in \Demands.}
			\end{mini*}
Substituting the dual problem back into \eqref{robAss1} and \eqref{robAss2}, we get the following linear set of constraints which enforce Assumption~\ref{ass:1} for $\GammaRobustBendersFormulation$:	
\begin{align} 
&\sum_{\demand \in \Demands} \left(\upperUnsteerable_\demand \dualBudgetCap^\location_\demand - \lowerUnsteerable_\demand \kappa^\location_\demand \right) + \demandBound_\indexUnsteerable \rho^\location \leq 	\capacitySession\, x_\location & \forall \location\in\Locations\\
&\sum_{\demand \in \Demands} \left(\upperUnsteerable_\demand \dualBudgetCap^\practice_\demand - \lowerUnsteerable_\demand \kappa^\practice_\demand \right) + \demandBound_\indexUnsteerable \rho^\practice \leq \capacityPractice_\practice  & \forall \practice\in\Practices\\
&\dualBudgetCap^k_\demand - \kappa^k_\demand + \rho^k \geq \hat{\varUnsteerableDemand}_{\demand k}  &\forall \demand \in \Demands,\, \forall k\in \Locations \cup \Practices\\
&\dualBudgetCap^k_\demand, \kappa^k_\demand, \rho^k \geq 0 &\forall \demand \in \Demands,\, \forall k\in \Locations \cup \Practices.
\end{align}
			
Last but not least, it remains to consider the enforcement of Assumption~\ref{ass:1} as we disaggregate sessions; see Section~\ref{sec:multi_period}.
For the session-specific strategic planning problem, the residual treatment capacities for fixed first-stage decisions $\hat{\varUnsteerableDemand}$ and $\hat{x}$ are given by

\begin{align*}
\helper_{\elocation}&\coloneqq\capacitySession\, \hat{x}_{\elocation} - 
\sum_{\edemand\in\eUnsteerableConsiderationSet(\elocation)}\unsteerableDemand_{\edemand}\, \hat{\varUnsteerableDemand}_{\edemand 
	\elocation} & \forall \elocation\in\eLocations,\\
\helper_{\epractice}&\coloneqq\capacityPractice_{\epractice} 
-\sum_{\edemand\in\eUnsteerableConsiderationSet(\epractice)}\unsteerableDemand_{\edemand}\,
\hat{\varUnsteerableDemand}_{\edemand 
	\epractice} & \forall \epractice\in\ePractices.  
\intertext{
As a result, we can enforce Assumption~\ref{ass:1} in $\SessionSpecificBendersFormulation$ using the constraints
}
&\sum_{\edemand\in\eUnsteerableConsiderationSet(\elocation)}\unsteerableDemand_{\edemand}\, \varUnsteerableDemand_{\edemand 
	\elocation} \leq 	\capacitySession\, x_{\elocation} & \forall \elocation\in\eLocations\\
&\sum_{\edemand\in\eUnsteerableConsiderationSet(\epractice)}\unsteerableDemand_{\edemand}\
\varUnsteerableDemand_{\edemand 
	\epractice} \leq \capacityPractice_{\epractice}  & \forall \epractice\in\ePractices.
\end{align*}

\section{Separation LP}
\label{appendix:separationLP}
The separation problem for $\BendersFormulation$ can be formulated as an LP based on the observations made in the proof of Theorem~\ref{thm:bendersoptcuts}.
That is, we need to decide whether for fixed first-stage decisions $\hat{x}$ and $\hat{\varUnsteerableDemand}$ there exists $U\subseteq \Demands$ such that $\sum_{\demand \in U} \steerableDemand_\demand  > \sum_{k \in \ConsiderationSet(U)} \helper_k$.
By encoding the choice of $U\subseteq \Demands$ through the variables $\varEncodeU_\demand \in [0,1]$ for all $\demand \in \Demands$ and the corresponding consideration set $\ConsiderationSet(U)$ through the variables $\varEncodeN_k\in [0,1]$ for all $k\in \Locations \cup \Practices$, we obtain the following formulation of the separation problem:
\begin{maxi*}|s|[2]
	{\varEncodeU, \varEncodeN}
	{\sum_{\demand \in \Demands} \steerableDemand_\demand\, \varEncodeU_\demand - \sum_{k \in \Locations \cup \Practices} \helper_k\, \varEncodeN_k}
	{}{\SeparationProblemFormulationAppendix\quad}
	\addConstraint{\varEncodeN_k}{\geq \varEncodeU_\demand}{\quad \forall \demand \in \Demands,\, \forall k \in \ConsiderationSet(\demand)}	
	\addConstraint{\varEncodeU_\demand}{\in  [0,1]}{\quad\forall \demand \in \Demands}
	\addConstraint{\varEncodeN_k}{\in  [0,1]}{\quad\forall k \in \Locations\cup \Practices.}
\end{maxi*}
Formulation $\SeparationProblemFormulationAppendix$ solves the dual problem to the LP-relation of the Benders subproblem  $\LPBendersSubproblem{\hat{y}}{\hat{x}}{\hat{w}}$.
If the optimal solution value to $\SeparationProblemFormulationAppendix$ is strictly positive, this yields a violated subset $\hat{U}\subseteq \Demands$ and we must resolve the restricted master problem.

Taking a closer look at formulation $\SeparationProblemFormulationAppendix$, it becomes evident that the separation problem for $\BendersFormulation$ is trivial if we only consider unsteerable demands: 
When $\steerableDemand_\demand = 0$ for all $\demand \in \Demands$ and $\helper_k \geq 0$ for all $k\in \Locations \cup \Practices$ due to Assumption~\ref{ass:1}, the objective of $\SeparationProblemFormulationAppendix$ is obviously non-positive.
Thus, the optimal solution value to $\SeparationProblemFormulationAppendix$ must be non-positive and there cannot exist a violated subset.

Note, that formulation $\SeparationProblemFormulationAppendix$ can be used to solve the separation problem for $\IntervalRobustBendersFormulation$ by simply substituting the deterministic steerable demands $\steerableDemand_\demand \in \N$ by the worst case uncertain steerable demands $\upperSteerable_\demand\in \N$ for all $\demand \in \Demands$.
	
	\section{Characteristics of test instances}
	\label{appendix:a}
	\begin{table}[H]
	\centering
	\small
	\begin{tabular}{@{}lllrrrrrrr@{}}
		\toprule
		Inst. & $\maxDist$ & $\fracUnsteerable$ & $|\Demands|$ & $|\Locations|$ & $|\Practices|$ & $\sum \steerableDemand{+}\unsteerableDemand$ & $\sum \upperSteerable {+} \upperUnsteerable$ & $\demandBound_\indexSteerable$ & $\demandBound_\indexUnsteerable$ \\ \midrule
		\num{1} & \num{6} & \num{0.2} & \num{417} & \num{28} & \num{16} & \num{3888} & \num{6400} & \num{3233} & \num{808} \\
		\num{2} & \num{6} & \num{0.25} & \num{417} & \num{28} & \num{16} & \num{3888} & \num{6400} & \num{3031} & \num{1010} \\
		\num{3} & \num{6} & \num{0.3} & \num{417} & \num{28} & \num{16} & \num{3888} & \num{6400} & \num{2829} & \num{1212} \\
		\num{4} & \num{6} & \num{0.35} & \num{417} & \num{28} & \num{16} & \num{3888} & \num{6400} & \num{2627} & \num{1414} \\
		\num{5} & \num{6} & \num{0.4} & \num{417} & \num{28} & \num{16} & \num{3888} & \num{6400} & \num{2425} & \num{1616} \\
		\num{6} & \num{6} & \num{0.45} & \num{417} & \num{28} & \num{16} & \num{3888} & \num{6400} & \num{2223} & \num{1818} \\
		\addlinespace[.2cm]
		\num{7} & \num{7} & \num{0.2} & \num{462} & \num{28} & \num{16} & \num{3886} & \num{6517} & \num{3233} & \num{808} \\
		\num{8} & \num{7} & \num{0.25} & \num{462} & \num{28} & \num{16} & \num{3886} & \num{6517} & \num{3031} & \num{1010} \\
		\num{9} & \num{7} & \num{0.3} & \num{462} & \num{28} & \num{16} & \num{3886} & \num{6517} & \num{2829} & \num{1212} \\
		\num{10} & \num{7} & \num{0.35} & \num{462} & \num{28} & \num{16} & \num{3886} & \num{6517} & \num{2627} & \num{1414} \\
		\num{11} & \num{7} & \num{0.4} & \num{462} & \num{28} & \num{16} & \num{3886} & \num{6517} & \num{2425} & \num{1616} \\
		\num{12} & \num{7} & \num{0.45} & \num{462} & \num{28} & \num{16} & \num{3886} & \num{6517} & \num{2223} & \num{1818} \\
		\addlinespace[.2cm]
		\num{13} & \num{8} & \num{0.2} & \num{506} & \num{28} & \num{16} & \num{3885} & \num{6668} & \num{3233} & \num{808} \\
		\num{14} & \num{8} & \num{0.25} & \num{506} & \num{28} & \num{16} & \num{3885} & \num{6668} & \num{3031} & \num{1010} \\
		\num{15} & \num{8} & \num{0.3} & \num{506} & \num{28} & \num{16} & \num{3885} & \num{6668} & \num{2829} & \num{1212} \\
		\num{16} & \num{8} & \num{0.35} & \num{506} & \num{28} & \num{16} & \num{3885} & \num{6668} & \num{2627} & \num{1414} \\
		\num{17} & \num{8} & \num{0.4} & \num{506} & \num{28} & \num{16} & \num{3885} & \num{6668} & \num{2425} & \num{1616} \\
		\num{18} & \num{8} & \num{0.45} & \num{506} & \num{28} & \num{16} & \num{3885} & \num{6668} & \num{2223} & \num{1818} \\
		\addlinespace[.2cm]
		\num{19} & \num{9} & \num{0.2} & \num{546} & \num{28} & \num{16} & \num{3888} & \num{6829} & \num{3233} & \num{808} \\
		\num{20} & \num{9} & \num{0.25} & \num{546} & \num{28} & \num{16} & \num{3888} & \num{6829} & \num{3031} & \num{1010} \\
		\num{21} & \num{9} & \num{0.3} & \num{546} & \num{28} & \num{16} & \num{3888} & \num{6829} & \num{2829} & \num{1212} \\
		\num{22} & \num{9} & \num{0.35} & \num{546} & \num{28} & \num{16} & \num{3888} & \num{6829} & \num{2627} & \num{1414} \\
		\num{23} & \num{9} & \num{0.4} & \num{546} & \num{28} & \num{16} & \num{3888} & \num{6829} & \num{2425} & \num{1616} \\
		\num{24} & \num{9} & \num{0.45} & \num{546} & \num{28} & \num{16} & \num{3888} & \num{6829} & \num{2223} & \num{1818} \\
		\addlinespace[.2cm]
		\num{25} & \num{10} & \num{0.2} & \num{576} & \num{28} & \num{16} & \num{3887} & \num{6938} & \num{3233} & \num{808} \\
		\num{26} & \num{10} & \num{0.25} & \num{576} & \num{28} & \num{16} & \num{3887} & \num{6938} & \num{3031} & \num{1010} \\
		\num{27} & \num{10} & \num{0.3} & \num{576} & \num{28} & \num{16} & \num{3887} & \num{6938} & \num{2829} & \num{1212} \\
		\num{28} & \num{10} & \num{0.35} & \num{576} & \num{28} & \num{16} & \num{3887} & \num{6938} & \num{2627} & \num{1414} \\
		\num{29} & \num{10} & \num{0.4} & \num{576} & \num{28} & \num{16} & \num{3887} & \num{6938} & \num{2425} & \num{1616} \\
		\num{30} & \num{10} & \num{0.45} & \num{576} & \num{28} & \num{16} & \num{3887} & \num{6938} & \num{2223} & \num{1818} \\
		\addlinespace[.2cm]
		\num{31} & \num{11} & \num{0.2} & \num{583} & \num{28} & \num{16} & \num{3889} & \num{6960} & \num{3233} & \num{808} \\
		\num{32} & \num{11} & \num{0.25} & \num{583} & \num{28} & \num{16} & \num{3889} & \num{6960} & \num{3031} & \num{1010} \\
		\num{33} & \num{11} & \num{0.3} & \num{583} & \num{28} & \num{16} & \num{3889} & \num{6960} & \num{2829} & \num{1212} \\
		\num{34} & \num{11} & \num{0.35} & \num{583} & \num{28} & \num{16} & \num{3889} & \num{6960} & \num{2627} & \num{1414} \\
		\num{35} & \num{11} & \num{0.4} & \num{583} & \num{28} & \num{16} & \num{3889} & \num{6960} & \num{2425} & \num{1616} \\
		\num{36} & \num{11} & \num{0.45} & \num{583} & \num{28} & \num{16} & \num{3889} & \num{6960} & \num{2223} & \num{1818} \\ \bottomrule
	\end{tabular}
		\caption{Test instances with their characteristics.}
		\label{table:instances}
\end{table}	
	
	\section{Computational results}
	\label{appendix:b}
\begin{table}[H]
	\centering
	\small
	\begin{tabular}{@{}LLLRRRRRR@{}}
		\toprule
		\multicolumn{3}{c}{Instance} & \multicolumn{3}{c}{Objective} & \multicolumn{3}{c}{CPU [s]} \\ 
		\cmidrule(r){1-3} \cmidrule(lr){4-6}   \cmidrule(l){7-9} 
		\# & \maxDist & \fracUnsteerable & \BendersFormulationNB & \GammaRobustBendersFormulationNB  & \IntervalRobustBendersFormulationNB & \BendersFormulationNB & \GammaRobustBendersFormulationNB & \IntervalRobustBendersFormulationNB \\
		\midrule
		1 & 6 & 0.2 & 31 & 46 & 64 & 6 & 32 & 5 \\
		2 & 6 & 0.25 & 31 & 48 & 64 & 8 & 44 & 3 \\
		3 & 6 & 0.3 & 32 & 50 & 64 & 7 & 45 & 5 \\
		4 & 6 & 0.35 & 34 & 53 & 64 & 5 & 96 & 5 \\
		5 & 6 & 0.4 & 35 & 55 & 64 & 3 & 139 & 8 \\
		6 & 6 & 0.45 & 36 & 67 & 70 & 5 & 93 & 12 \\
		\addlinespace[.2cm]
		7 & 7 & 0.2 & 25 & 32 & 68 & 10 & 26 & 3 \\
		8 & 7 & 0.25 & 26 & 37 & 68 & 6 & 38 & 5 \\
		9 & 7 & 0.3 & 27 & 39 & 68 & 5 & 26 & 6 \\
		10 & 7 & 0.35 & 29 & 43 & 68 & 7 & 26 & 7 \\
		11 & 7 & 0.4 & 30 & 48 & 68 & 5 & 57 & 3 \\
		12 & 7 & 0.45 & 31 & 52 & 68 & 3 & 18 & 4 \\
		\addlinespace[.2cm]
		13 & 8 & 0.2 & 17 & 26 & 75 & 8 & 73 & 12 \\
		14 & 8 & 0.25 & 17 & 30 & 75 & 6 & 42 & 4 \\
		15 & 8 & 0.3 & 20 & 36 & 75 & 9 & 51 & 9 \\
		16 & 8 & 0.35 & 20 & 40 & 75 & 9 & 41 & 7 \\
		17 & 8 & 0.4 & 21 & 45 & 75 & 7 & 44 & 34 \\
		18 & 8 & 0.45 & 22 & 54 & 75 & 4 & 19 & 6 \\
		\addlinespace[.2cm]
		19 & 9 & 0.2 & 9 & 13 & 81 & 2 & 18 & \num{1678} \\
		20 & 9 & 0.25 & 10 & 15 & 81 & 4 & 16 & 683 \\
		21 & 9 & 0.3 & 11 & 21 & 81 & 5 & 29 & \num{1936} \\
		22 & 9 & 0.35 & 11 & 27 & 81 & 4 & 30 & \num{1847} \\
		23 & 9 & 0.4 & 12 & 32 & 81 & 3 & 49 & \num{1069} \\
		24 & 9 & 0.45 & 13 & 47 & 81 & 4 & 29 & 24 \\
		\addlinespace[.2cm]
		25 & 10 & 0.2 & 3 & 4 & 87 & 4 & 14 & 4 \\
		26 & 10 & 0.25 & 3 & 11 & 87 & 2 & 10 & \num{6944} \\
		27 & 10 & 0.3 & 3 & 12 & 87 & 4 & 10 & \num{5993} \\
		28 & 10 & 0.35 & 4 & 15 & 87 & 2 & 8 & \num{8577} \\
		29 & 10 & 0.4 & 4 & 32 & 87 & 2 & 16 & \num{1126} \\
		30 & 10 & 0.45 & 11 & 41 & 87 & 4 & 10 & 191 \\
		\addlinespace[.2cm]
		31 & 11 & 0.2 & 0 & 0 & 88 & 2 & 8 & \num{10026} \\
		32 & 11 & 0.25 & 0 & 7 & 88 & 2 & 9 & 4 \\
		33 & 11 & 0.3 & 0 & 8 & 88 & 2 & 12 & \num{12815} \\
		34 & 11 & 0.35 & 0 & 15 & 88 & 2 & 20 & \num{12925} \\
		35 & 11 & 0.4 & 0 & 32 & 88 & 2 & 12 & \num{4968} \\
		36 & 11 & 0.45 & 7 & 41 & 88 & 4 & 13 & \num{1403}\\
		\bottomrule
	\end{tabular}
	\caption{Computational results.}
	\label{table:comp}
\end{table}

	
\end{appendices}

\end{document}